\documentclass[12pt]{article}

\usepackage{amsmath}
\usepackage{amssymb}
\usepackage{amsthm}
\usepackage{amsfonts}
\usepackage{amscd}
\usepackage{graphicx} 
\usepackage{hyperref}
\usepackage{array} 
\usepackage{enumitem} 
\usepackage[numbers,sort&compress]{natbib}
\usepackage{titling} 
\usepackage[T1]{fontenc} 
\usepackage{tikz}
\usepackage[medium]{titlesec}
\usepackage{longtable}
\usepackage{etoolbox} 
\usepackage{geometry}
\usepackage{setspace}
\usepackage{verbatim} 
\usepackage{adjustbox}
\usepackage{caption}

\geometry{a4paper,left=30mm,right=30mm,top=20mm,bottom=20mm}
\setstretch{1.5}
\setlength{\parskip}{2mm}\setlength{\parindent}{0mm} 
\setlength{\droptitle}{-2cm}
\captionsetup{labelfont=bf, font={it,stretch=1.5}} 
\setcounter{tocdepth}{1}
\hypersetup{colorlinks,urlcolor=black!50!green,citecolor=black!45!green,linkcolor=blue}
\bibliographystyle{plainnat}

\makeatletter\patchcmd{\ttlh@hang}{\parindent\z@}{\parindent\z@\leavevmode}{}{}\patchcmd{\ttlh@hang}{\noindent}{}{}{}\makeatother 
\titlespacing*{\section}{0pt}{2mm}{2mm}
\titlespacing*{\subsection}{0pt}{2mm}{2mm}
\titlespacing*{\paragraph}{0pt}{-2mm}{0mm}
\newcommand{\myspace}{\setlength{\abovedisplayskip}{2mm}\setlength{\belowdisplayskip}{2mm}}

\newif\ifshowtikz
\showtikztrue\newcommand{\fig}[3]{\includegraphics[height=#1cm, width=#2cm]{img/#3}}
\showtikztrue\newcommand{\udot}[1]{\tikz[baseline=(todotted.base)]{\node[inner sep=1pt,outer sep=0pt] (todotted) {$#1$};\draw[densely dotted] (todotted.south west) -- (todotted.south east);}}
\let\oldtikzpicture\tikzpicture
\let\oldendtikzpicture\endtikzpicture
\renewenvironment{tikzpicture}{\ifshowtikz\expandafter\oldtikzpicture\else\comment\fi}{\ifshowtikz\oldendtikzpicture\else\endcomment\fi}

\theoremstyle{plain}
\newtheorem{lemma}{Lemma}
\newtheorem{proposition}{Proposition}
\newtheorem{theorem}{Theorem}
\newtheorem{corollary}{Corollary}

\newtheorem{theoremext}{Theorem}

\theoremstyle{definition}
\newtheorem{definition}{Definition}
\newtheorem{remark}{Remark}
\newtheorem{example}{Example}
\newtheorem{notation}{Notation}
\newtheorem{algorithm}{Algorithm}
\newtheorem{problem}{Problem}

\newenvironment{Mlist}{\begin{itemize}[topsep=0pt,itemsep=0pt,leftmargin=7mm]}{\end{itemize}}
\newenvironment{Menum}{\begin{enumerate}[topsep=0pt,itemsep=0pt,leftmargin=7mm]}{\end{enumerate}}
\newenvironment{MenumA}{\begin{enumerate}[topsep=0pt,itemsep=0pt,leftmargin=7mm,label=\alph*.]}{\end{enumerate}}

\newcommand{\PRP}[1]{Proposition~\ref{prp:#1}}
\newcommand{\LEM}[1]{Lemma~\ref{lem:#1}}
\newcommand{\THM}[1]{Theorem~\ref{thm:#1}}
\newcommand{\COR}[1]{Corollary~\ref{cor:#1}}
\newcommand{\DEF}[1]{Definition~\ref{def:#1}}
\newcommand{\RMK}[1]{Remark~\ref{rmk:#1}}
\newcommand{\SEC}[1]{\textsection\ref{sec:#1}}
\newcommand{\FIG}[1]{Figure~\ref{fig:#1}}
\newcommand{\ALG}[1]{Algorithm~\ref{alg:#1}}
\newcommand{\TAB}[1]{Table~\ref{tab:#1}}
\newcommand{\EXM}[1]{Example~\ref{exm:#1}}
\newcommand{\NTN}[1]{Notation~\ref{ntn:#1}}
\newcommand{\PRO}[1]{Problem~\ref{pro:#1}}
\newcommand{\END}{\hfill $\vartriangleleft$}
\newcommand{\df}[1]{\textit{#1}}
\newcommand{\mclaim}[1]{\item[\textbf{#1)}]}
\newcommand{\refmclaim}[1]{#1)}
\newcommand{\Iff}{if and only if }
\newcommand{\st}{{such that }}
\newcommand{\Wlog}{without loss of generality }
\newcommand{\resp}{respectively}
\newcommand{\wrt}{with respect to }
\newcommand{\arrow}[3]{#1\stackrel{#2}{\longrightarrow}#3}
\renewcommand{\c}{\colon}
\newcommand{\dto}{\dashrightarrow}
\newcommand{\md}[1]{\langle #1\rangle}
\newcommand{\set}[2]{\{ #1 ~|~ #2 \}}
\newcommand{\Sum}[2]{\overset{#2}{\underset{#1}{\sum}}}
\newcommand{\aut}{\operatorname{Aut}}
\newcommand{\rnk}{\operatorname{rank}}
\newcommand{\C}{\mathbb{C}}
\newcommand{\R}{\mathbb{R}}
\newcommand{\Q}{\mathbb{Q}}
\newcommand{\Z}{\mathbb{Z}}
\renewcommand{\P}{\mathbb{P}}
\renewcommand{\S}{\mathbb{S}}

\newcommand{\CG}{\mathcal{G}}
\newcommand{\CH}{\mathcal{H}}
\newcommand{\CL}{\mathcal{L}}

\newcommand{\CT}{\mathcal{T}}
\newcommand{\CV}{\mathcal{V}}
\newcommand{\CW}{\mathcal{W}}
\newcommand{\FD}{\mathfrak{D}}
\newcommand{\FC}{\mathfrak{C}}
\newcommand{\BF}{\mathbf{F}}
\newcommand{\BG}{\mathbf{G}}
\newcommand{\BP}{\mathbf{P}}
\newcommand{\BS}{\mathbf{S}}
\newcommand{\BN}{\mathbf{N}}
\newcommand{\e}{e}
\newcommand{\p}{\varepsilon}
\renewcommand{\k}{k}
\renewcommand{\l}{\ell}
\newcommand{\h}{h}

\title{Webs of rational curves on real surfaces and a classification of real weak del Pezzo surfaces}
\author{Niels Lubbes}
\date{\today}

\begin{document}
\myspace

\maketitle

\begin{abstract}
We classify webs of minimal degree rational 
curves on surfaces and give a criterion for 
webs being hexagonal. 
In addition, we classify 
Neron-Severi lattices of real weak del Pezzo surfaces.
These two classifications are related
to root subsystems of E8.
\\[2mm]
{\bf Keywords:} families of curves, hexagonal webs, real weak del Pezzo surfaces, Neron-Severi lattice, linear series, root systems
\\[2mm]
{\bf MSC2010:} 14C21, 14D99, 14Q10, 14P99, 53A60 
\end{abstract}

\begingroup
\def\addvspace#1{\vspace{0pt}}
\tableofcontents
\endgroup

\section{Introduction}
\label{sec:intro}

Lines play a central role in Euclidean geometry and are rational curves of minimal degree. 
Through any 
two points in the plane exists a unique line and the family of lines in the plane is 2-dimensional. 
We investigate the geometry of real surfaces in projective space,
by considering surfaces as a union of curves that are ``simple''.
With \df{surface} we shall mean a real irreducible algebraic surface (see \SEC{pre}).
A \df{simple family} 
is an algebraic family of minimal degree rational curves
that covers a surface $X\subset\P^n$,
\st a general curve in this family is smooth outside the singular locus of $X$.
Moreover, we assume that the dimension of a simple family is as large as possible
(see \DEF{fam}). 
A \df{simple curve} is a curve that belongs to some simple family.

The \df{intersection product} of two simple families that cover $X$
is defined as the number of intersections between a general curve in the first family 
and a general curve in the second family, outside the singular locus of $X$.

The \df{simple family graph} $\CG(X)$ is defined as follows:
\begin{Mlist}
\item Each vertex is a simple family of $X$. 
A vertex is labeled with the dimension of the simple family.

\item
We draw between two simple families
an edge if their intersection product is at least two. 
Such an edge is labeled with this intersection product.
\end{Mlist}
Notice that simple families in $\CG(X)$ that are not
connected by an edge must have intersection product one.

See \FIG{simple} for examples of simple family graphs.
The simple family graph of a one-sheeted hyperboloid
was discovered in 1669 by Sir Christopher Wren~\cite{wrn1}.
The simple family graph of a ring cyclide was discovered by Yvon Villarceau
and the two neighboring vertices define families of \df{Villarceau circles}~\citep[1848]{vil1}.
The simple family graph of a Blum cyclide
was discovered in 1980 by Richard Blum~\cite{blum1}.
The ring cyclide, Blum cyclide and dP6 are 
examples of weak del Pezzo surfaces of degree 4, 4 and 6, respectively (see \DEF{wdp}).

In this article we address the following problem.
\begin{problem}
\label{pro:1}
{\it
Classify simple family graphs $\CG(X)$ of real surfaces
$X\subset\P^n$ and
determine properties of $X$ 
that are encoded in the invariant $\CG(X)$.
}
\end{problem}

In the following \THM{coarse}, $[\Lambda]^2$ denotes the set of $2$-element subsets of 
$\Lambda:=\{1,\ldots,m\}$ for some $m\geq 3$.
If $G$ and $H$ are non-empty graphs, then $G\diamond H$
is the graph that is defined as the union $G\cup H$, where 
in addition each vertex in~$G$ is connected with each vertex in~$H$
via an edge with label~$2$.

\begin{theorem}
\label{thm:coarse}
If $\CG(X)$ is the simple family graph of a real surface~$X\subset\P^n$,
then either $\CG(X)=\emptyset$ or $\CG(X)$ is characterized by one of the following graphs:
\begin{MenumA}

\item 
The minimal family graph of a real weak del Pezzo surface
which is encoded by a row in \TAB{nsl} and where
each vertex has label~1. 

\item 
The graph that consists of a single vertex with label 3.

\item 
The graph that consists of a single vertex with label 2.

\item 
A graph that consists of vertices labeled 1 and no edges.

\item 
A completely connected graph \st each vertex has label 1 and each edge has label 2. 

\item 
The graph with vertex set $[\Lambda]^2$ \st each vertex has label 1
and an edge between different vertices $A, B\in[\Lambda]^2$ has label $4-2|A\cap B|$.

\item 
The graph with vertex set $[\Lambda]^2$ 
\st there is an edge between different vertices $A, B\in[\Lambda]^2$ if and only if $|A\cap B|=0$.
Each vertex has label 1 and each edge has label 2.

\item
$G\diamond H$ where $G$ and $H$ are characterized by (d) and (e), \resp.

\item
$G\diamond H$ where $G$ and $H$ are characterized by (d) and (f), \resp.

\item
$G\diamond H$ where $G$ and $H$ are characterized by (e) and (g), \resp.

\item
$G\diamond H$ where $G$ and $H$ are characterized by (c) and (e), \resp.

\item
$G\diamond H$ where $G$ and $H$ are characterized by (c) and (f), \resp;
optionally, some vertices with label 1 are omitted.

\end{MenumA}
\end{theorem}

\begin{figure}
\centering
\def\IMG{(0,1)}

\def\CYU{-1.8}
\def\CYL{-2.3}

\definecolor{wdred}{RGB}{153, 25, 25}
\definecolor{wlred}{RGB}{215, 91, 78}
\definecolor{wdblue}{RGB}{14 , 97,120}
\definecolor{wlblue}{RGB}{66 ,143,171}
\definecolor{wgreen}{RGB}{101,137, 23}
\definecolor{wblack}{RGB}{  0,  0,  0}
\definecolor{wbrown}{RGB}{ 86, 34,  0}
\newcommand{\vv}[2][] {\draw[draw=black, fill=#1, line width=0.2mm] #2 circle [radius=2mm] node[yellow] {$1$};}
\newcommand{\vw}[1] {\draw[draw=black, fill=gray!20, line width=0.2mm] #1 circle [radius=2mm] node[gray] {$1$};}
\newcommand{\ee}[3] {\draw[line width=0.2mm, draw=black] #1 -- #2;\node[black] at #3 {$2$};}

\def\sx{0} \def\sy{-0.8}
\def\ddx{0.9} \def\ddy{0.9}
\def\ax{\sx-\ddx} \def\ay{\sy}
\def\bx{\sx     } \def\by{\sy}
\def\cx{\sx+\ddx} \def\cy{\sy}
\def\dx{\sx-\ddx} \def\dy{\sy-\ddy}
\def\ex{\sx     } \def\ey{\sy-\ddy}
\def\fx{\sx+\ddx} \def\fy{\sy-\ddy}
\def\A{(\ax,\ay)}
\def\B{(\bx,\by)}
\def\C{(\cx,\cy)}
\def\D{(\dx,\dy)}
\def\E{(\ex,\ey)}
\def\F{(\fx,\fy)}
\def\AD{(\ax+0.13,\ay-\ddy/2)} 
\def\BE{(\bx+0.13,\by-\ddy/2)} 
\def\CF{(\cx+0.13,\cy-\ddy/2)} 

\def\dddx{0.5}
\def\gx{\sx-\dddx} \def\gy{\sy}
\def\hx{\sx-\dddx} \def\hy{\sy-\ddy}
\def\ix{\sx+\dddx} \def\iy{\sy-\ddy}
\def\jx{\sx+\dddx} \def\jy{\sy}
\def\G{(\gx,\gy)}
\def\H{(\hx,\hy)}
\def\I{(\ix,\iy)}
\def\J{(\jx,\jy)}
\def\GH{(\gx+0.13,\gy-\ddy/2)} 

\begin{tabular}{@{}c@{\quad}c@{\quad}c@{}}
\begin{tikzpicture}
\node[inner sep=0pt] at \IMG {\fig{3}{3}{EH1-web}}; 
\vv[wdred]{(\sx-\ddx/2,\sy-\ddy/2)};
\vv[wdblue]{(\sx+\ddx/2,\sy-\ddy/2)};
\node[black] at (0,\CYU) {one-sheeted};
\node[black] at (0,\CYL)  {hyperboloid};
\end{tikzpicture}
&
\begin{tikzpicture}
\node[inner sep=0pt] at \IMG {\fig{3}{4}{torus-web-1}}; 
\ee \G \H \GH;
\vv[wlred]{\G};
\vw{\H};
\vv[wbrown]{\I};
\vv[wlblue]{\J};
\node[black] at (0,\CYL) {ring cyclide};
\end{tikzpicture}
&
\begin{tikzpicture}
\node[inner sep=0pt] at \IMG {\fig{3}{4}{torus-web-2}}; 
\ee \G \H \GH;
\vv[wlred]{\G};
\vv[wgreen]{\H};
\vw{\I};
\vv[wlblue]{\J};
\node[white] at (0,\CYL) {};
\end{tikzpicture}
\\[5mm]
\begin{tikzpicture}
\node[inner sep=0pt] at \IMG {\fig{3}{4.4}{dp6-web}}; 
\vv[wgreen]{(\sx,\sy)};
\vv[wbrown]{(\sx-\ddx/2-0.2,\sy-\ddy)};
\vv[wlblue]{(\sx+\ddx/2+0.2,\sy-\ddy)};
\node[black] at (0,\CYL) {dP6};
\end{tikzpicture}
&
\begin{tikzpicture}
\node[inner sep=0pt] at \IMG {\fig{3}{4}{blum-web-1}}; 
\ee \A \D \AD;
\ee \B \E \BE;
\ee \C \F \CF;
\vv[wlblue]{\A};
\vv[wgreen]{\B};
\vv[wbrown]{\C};
\vw{\D};
\vw{\E};
\vw{\F};
\node[black] at (0,\CYL) {Blum cyclide};
\end{tikzpicture}
&
\begin{tikzpicture}
\node[inner sep=0pt] at \IMG {\fig{3}{4}{blum-web-2}}; 
\ee \A \D \AD;
\ee \B \E \BE;
\ee \C \F \CF;
\vw{\A};
\vw{\B};
\vw{\C};
\vv[wlblue]{\D};
\vv[wgreen]{\E};
\vv[wbrown]{\F};
\node[white] at (0,\CYL) {Blum cyclide};
\end{tikzpicture}
\end{tabular}
\caption{Examples of simple family graphs.}
\label{fig:simple}
\end{figure}

\begin{corollary}
\label{cor:labels}
Suppose that $X\subset\P^n$ is a real surface.
A vertex in $\CG(X)$ has label $\leq 3$ and an edge in $\CG(X)$ has label $\leq 8$.
If an edge has label~$3$ or label~$\geq 5$, then $\CG(X)$ contains $\leq 2160$
vertices and $\leq 2262600$ edges.
\end{corollary}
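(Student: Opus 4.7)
The plan is to deduce the corollary by a case-by-case inspection of the five classes of graphs listed in \THM{coarse}, since every simple family graph $\McalG(X)$ must be isomorphic to one of them.

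First I would record, for each of the five cases, the maximum possible vertex label and the maximum possible edge label.
\begin{itemize}[topsep=0pt, itemsep=0pt]
\item In case~1 (isolated vertices) the label of any vertex is explicitly $\le 3$, and there are no edges.
\item In case~2 the theorem gives vertex label $1$, edge label $\le 8$, at most $2160$ vertices and at most $2262600$ edges.
\item In case~3 vertices in $[\Lambda]^2$ carry label $1$, and the edge label $4-2|A\cap B|$ lies in $\{2,4\}$ because two distinct $2$-element subsets of $\Lambda$ meet in $0$ or $1$ elements; the optional extra vertex has label $2$ and its incident edges have label $2$.
\item In case~4 every vertex has label $1$ and every edge has label $2$.
\item In case~5 every vertex has label $1$ and every edge has label $2$.
\end{itemize}

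Taking the maximum across all cases gives vertex label $\le 3$ and edge label $\le 8$. Moreover, edges of label $\ge 5$ occur only in case~2, since in cases~1,3,4,5 every edge carries label at most $4$, so if some edge of $\McalG(X)$ has label $\ge 5$ then $\McalG(X)$ falls in case~2 and the cardinality bounds $|V(\McalG(X))|\le 2160$ and $|E(\McalG(X))|\le 2262600$ apply verbatim from the statement of \THM{coarse}.2.

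The only non-routine step is verifying the assertion in case~3 that the edge label $4-2|A\cap B|$ never exceeds $4$; here I would simply note that for two distinct vertices $A\neq B$ in $[\Lambda]^2$ the intersection $A\cap B$ has cardinality $0$ or $1$, so $4-2|A\cap B|\in\{2,4\}$, and for edges incident to the optional extra vertex the theorem prescribes label $2$. All remaining assertions are read off directly from the classification, so the proof is essentially a bookkeeping argument and no genuine obstacle arises once \THM{coarse} is available.
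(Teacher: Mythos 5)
Your proposal is correct and follows exactly the route the paper takes: the paper's proof of this corollary is simply ``Direct consequences of \THM{coarse}'', and your case-by-case bookkeeping (labels $\le 3$ from case~1, labels $\le 4$ in cases~3--5, so edge labels $\ge 5$ force case~2 with its $2160$/$2262600$ bounds) is just that deduction written out in detail.
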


\newpage
\begin{corollary}
\label{cor:dis}
If $\CG(X)$ has $v\geq 4$ vertices and is characterized by \THM{coarse}a,
then $\CG(X)$ is not characterized by \THM{coarse}[b-l], 
except for the simple family graphs associated to the following rows in \TAB{nsl}:
\begin{Mlist}
\item $v=4$: rows $\{16, 33\}$, $\{116\}$ and $\{283, 284, 288, 289, 647, 671\}$ are also characterized by \THM{coarse}d, \THM{coarse}h and \THM{coarse}e, \resp.
\item $v=5$: row 15 is also characterized by \THM{coarse}d.
\item $v=6$: rows 335, 360, 708, 714, 721, 737, 745 and 761 are also characterized by \THM{coarse}f.
\item $v=7$: rows 267, 268 and 272 are also characterized by \THM{coarse}i.
\end{Mlist}
\end{corollary}

The proof of \THM{coarse} uses results from \cite{nls-algo-min-fam} (see \LEM{bas} and \LEM{pp}).
We recover \citep[Corollary~1a]{nls-algo-min-fam} which addresses the second part of \PRO{1}:

\begin{corollary}
\label{cor:sphere}
Suppose that $X\subset\P^n$ is a real surface.
If a vertex in $\CG(X)$ has label $\geq 3$, then the normalization of 
$X$ is biregular isomorphic to a quadric whose real points form the 
unit-sphere $S^2\subset \R^3$. 
\end{corollary}

\newpage
The unique recovery of $\CG(X)$ from a row in \TAB{nsl}
will be discussed at \THM{nsl} in a moment and later at \EXM{howto}.
Our methods are constructive and we hope to interest both geometric modelers and algebraic geometers. 
With \citep[Algorithm~1]{nls-algo-min-fam} we can compute the graph $\CG(X)$
from a given birational map $\P^2\dto X$. 
The implementation at \citep[\texttt{ns\_lattice}]{ns_lattice} can be used for checking the 850 cases in \TAB{nsl} automatically. 
We find for example that a simple family graph does not consist of 4 vertices and 2 edges.
See \citep[{\tt orbital}]{orbital} for the visualization of simple families on surfaces.

The Cayley-Salmon theorem states that a smooth cubic surface 
over an algebraically closed field contains 27 straight lines \citep[1848]{cay1}
and the intersection graph of the lines is the dual of the Schl\"afli graph.
The set of hyperplane sections of the cubic surface containing at least one of these lines define 27 families of conics.
In the cubic Clebsch surface these 27 families are real \citep[1871]{cleb1} 
and the corresponding simple family graph is in \FIG{cleb}. 
\begin{figure}[!ht]
\centering
\newcommand{\vw}[1] {\draw[draw=black, fill=black!3!green, line width=0.01mm] #1 circle [radius=1.8mm] node[gray] {\tiny $1$};}
\begin{tikzpicture}
\node[inner sep=0pt] at (0,0) {\fig{5.9}{5.9}{graph-3}};   
\foreach \a in {1,2,...,27}{\vw{(\a*360/27: 2.9cm)};}
\end{tikzpicture} 
\caption{
The 10-regular graph with 27 vertices is
the simple family graph of a Clebsch surface (the edges are labeled 2).
This graph is known as the generalized quadrangle $GQ(2,4)$ \cite{wiki}.
}
\label{fig:cleb}
\end{figure}

Normal cubic surfaces are special cases of weak del Pezzo surfaces \cite{dol1,man2}.
Intersection graphs of lines on (complex) del Pezzo surfaces have been studied \cite{mo} and
these intersection graphs uniquely determine the simple family graphs of the surfaces.
An advantage of simple family graphs of weak del Pezzo surfaces is that,
unlike intersection graphs of lines,
such graphs apply to all rational surfaces
whose simple family graphs are not characterized by one of
the eleven ``exceptional graph types'' as listed in \THM{coarse}[b-l].

Del Pezzo surfaces are investigated since at least 1830 \citep[Chapter 9, Historical~notes]{dol1}
and the geometry of these celebrated surfaces remain more than 175 years later still a topic of research \cite{top1,sturm}.
Del Pezzo surfaces play a central role in the classification of real rational surfaces 
\citep[Theorem~1.9]{kol2}\citep[Corollary~2.6]{sil1}.
See \cite{sch-dp} for an introduction to the theory of del Pezzo surfaces with emphasis on geometric modeling.

The Neron-Severi lattice~$\BN(X)$ of a weak del Pezzo surface $X\subset \P^n$ encodes 
to a large extend the geometry of $X$ and in particular
its minimal family graph~$\CG(X)$ (see \PRP{class} and \RMK{G}).
It is remarkable that this algebraic structure is determined by the combinatorics of the E8 root system.
See \DEF{nsl} for the data associated to the Neron-Severi lattice
and see \DEF{wdp} for the notion of weak del Pezzo surface.
In light of \THM{coarse}a we classify the Neron-Severi lattices of
real weak del Pezzo surfaces. 

\begin{theorem}
\label{thm:nsl}
If $X$ is a real weak del Pezzo surface, 
then its Neron-Severi lattice $\BN(X)$ 
is up to isomorphism uniquely
characterized by exactly one of 825 rows in \TAB{nsl}.
\end{theorem}

See \EXM{howto} for an explanation of how to read \TAB{nsl} and how to recover $\CG(X)$ uniquely from a row.
\THM{nsl} has been known before with the following additional assumptions:
\begin{Mlist}
\item $X$ is a real (non-weak) del Pezzo surface \citep[Corollary~2.1]{wal1}.
\item the real structure of $X$ acts trivially on $N(X)$ \cite{val1} (see also \citep[Sections~8.7.1 and 8.8.1]{dol1}).
\item $X$ is a real weak del Pezzo surface of degree 3 with only real singularities \cite{mil} (there are 44 cases).
\end{Mlist}

If $X\subset\P^3$ is quadric surface, 
then $\BN(X)$ is isomorphic to the Neron-Severi lattice of either a sphere, a quadric cone or 
a hyperboloid of one-sheet.
In comparison, if $X\subset\P^3$ is a cubic surface with at most isolated singularities, 
then it follows from \THM{nsl} that there are up to isomorphism 56 different choices for
$\BN(X)$ and the number of real lines in $X$ is either $\leq 12$ or in $\{15,16,21,27\}$.

A \df{$\lambda$-web of curves} on a surface $X$ 
is defined as a set $\CW$ of algebraic curves contained in $X$
such that through almost every point in $X$ pass exactly
$\lambda$ curves in $\CW$.
Something remarkable can be observed about the 
3-webs of simple curves as visualized in \FIG{simple}.
In 1927, Thomson and Blaschke  
talked about such webs during their spring walks on Posillipo hill in Italy \citep[Vorwort]{bla1}.
The observation is that an open space on the surface is bordered 
by exactly three simple curves in a discretized realization of the web. In particular, many hexagonal
patterns emerge and therefore such triangular 3-webs are also known
as \df{hexagonal webs} (see \DEF{hex} or \citep[Lecture~18]{omni}). 
Hexagonal webs of simple curves lead to nice triangularizations of the 
underlying surface.
The property for a web to be hexagonal is purely topological, 
and its existence reveals in some cases properties of the surface under study.
See 
\citep[Appendix]{per1} 
for an overview of historical and recent
developments in web geometry.

For understanding the geometry of hexagonal webs of simple curves
let us first consider $\P^2$ (\FIG{vero}).
In this case, simple curves are exactly the lines and 
$\CG(\P^2)$ is characterized by \THM{coarse}c. Such webs
are of recent interest in combinatorial geometry \citep[Figure~14]{tao}.
Hexagonal webs of lines are characterized
in the following theorem from \citep[1924]{graf1} (see also \citep[Section~1.3, page~24]{bla1}). 
\begin{figure}[!ht]
\centering
\begin{tabular}{c@{\hspace{2cm}}c}
\begin{tikzpicture}
\node[inner sep=0pt] at (0,0) {\fig{4}{4}{plane-web}};
\draw[draw=black, fill=red, line width=0.01mm] (-0.04,1.5) circle [radius=1mm];
\draw[draw=black, fill=blue, line width=0.01mm] (-1.3,-0.8) circle [radius=1mm];
\draw[draw=black, fill=black!20!green, line width=0.01mm] (1.4,-0.87) circle [radius=1mm];
\end{tikzpicture}
& 
\begin{tikzpicture}
\node[inner sep=0pt] at (0,0) {\fig{4}{5}{veronese-web}}; 
\end{tikzpicture}
\end{tabular}
\caption{
On the left we see that three pencils of lines form a hexagonal web. 
The three pencils correspond to a reducible cubic consisting of three lines in the dual plane.
The Roman surface on the right is a linear projection to $\P^3$ of a 
Veronese embedding of the planar web into $\P^5$.}
\label{fig:vero}
\end{figure}

\begin{theoremext}[Graf-Sauer, 1924]
\label{thm:graf}
A 3-web of lines in $\P^2$
form a hexagonal web if and only if the lines correspond to points on a cubic curve
in the dual plane $\P^{2*}$ . 
\end{theoremext}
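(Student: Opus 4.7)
My plan is to prove the two directions separately using the classical characterization of hexagonality via \emph{abelian relations}: a $3$-web with leaf parameters $u_1,u_2,u_3$ is hexagonal if and only if there exist non-constant functions $f_1,f_2,f_3$ satisfying $f_1(u_1)+f_2(u_2)+f_3(u_3)\equiv 0$ (equivalently, the Blaschke web curvature vanishes). Reducing hexagonality to an abelian relation turns a purely synthetic closure condition into an analytic identity that I can then match against the algebraic structure of the dual curve.

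For the direction ``cubic $\Rightarrow$ hexagonal'', I would let $E\subset\MbbP^{2*}$ be the cubic (treating the smooth case first and passing to the general case by specialization) and fix a flex as the origin of its classical group law. The key input is that three points $a_1,a_2,a_3\in E$ are collinear in $\MbbP^{2*}$ \Miff $a_1+a_2+a_3=0$ in the group law. For any $p\in\MbbP^2$, the three web lines through $p$ dualize to the three points of $E\cap p^*$, which are collinear and hence sum to $0$ in $E$. Uniformizing $E$ by the Abel-Jacobi isomorphism $\MbbC/\Lambda\cong E$ and parametrizing $a_i$ by $\tau_i$, we obtain $\tau_1+\tau_2+\tau_3\equiv \mathrm{const}$ along the web, which is precisely the abelian relation required (with $f_i(x)=x$).

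For the direction ``hexagonal $\Rightarrow$ cubic'', I would first note that each of the three local families corresponds to an algebraic arc $C_i\subset\MbbP^{2*}$ of some degree $d_i$, and a generic pencil $p^*\subset\MbbP^{2*}$ meets $C_1\cup C_2\cup C_3$ in $d_1+d_2+d_3$ points, which must equal $3$. Hexagonality provides a nontrivial abelian relation for the web; the plan is then to invoke the converse of Abel's theorem in its web-geometric form (Lie-Darboux, refined by Bol-Chern-Griffiths): a nontrivial abelian relation for a linear $3$-web of lines forces the three arcs to lie on a common algebraic curve on which the relation becomes a trivial divisor-class identity. Combined with $d_1+d_2+d_3=3$, this forces the common curve to be a plane cubic (possibly reducible or singular).

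The main obstacle is the converse. Passing from the transcendental datum of an abelian relation to the algebraic datum ``the three arcs lie on a common cubic'' is the delicate step; Graf-Sauer's original approach applies the hexagonal closure repeatedly to produce enough collinearity data to interpolate a cubic, while the modern route invokes Lie's algebraization theorem for hexagonal webs. Ensuring that the emergent algebraic curve has total degree exactly $3$, rather than some larger degree compatible with the web combinatorics, is where most of the subtlety is concentrated.
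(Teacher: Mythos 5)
The paper does not prove this statement: Theorem~A is quoted as a classical result of Graf and Sauer, with references to \citep{graf1} and \citep{bla1}, and is used later only through its easiest instance --- three pencils of lines, dual to a reducible cubic consisting of three lines, form a hexagonal web (this is what the proof of \LEM{hex} invokes). So there is no internal proof to compare yours against; I can only judge the sketch on its own terms, and in outline it follows the standard classical route (Blaschke's equivalence ``hexagonal $\Leftrightarrow$ nontrivial abelian relation'', the group law on the cubic for one direction, Lie/Darboux--type converse of Abel for the other), which is sound as a plan.

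Two concrete weak points. First, in the direction ``cubic $\Rightarrow$ hexagonal'' the case the paper actually needs is the degenerate one (reducible cubic, i.e.\ three pencils), and ``passing to the general case by specialization'' is the thinnest step: you must argue that the foliations of the degenerating webs converge on a fixed domain strongly enough for the vanishing of the Blaschke curvature to pass to the limit, or better, argue directly on the smooth locus of an arbitrary reduced cubic, where each component is rational and collinearity is still expressed by a single additive relation; as written this is a plan, not a proof. Second, the direction ``hexagonal $\Rightarrow$ cubic'' has a logical wobble: you start by assuming each family is an algebraic arc $C_i\subset\MbbP^{2*}$ of degree $d_i$ with $d_1+d_2+d_3=3$. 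In the global algebraic setting of this paper (exactly three web lines through almost every point of $\MbbP^2$, algebraic families) that assumption already makes $C_1\cup C_2\cup C_3$ a cubic, so the direction is immediate and Abel's converse is never needed; in the classical local-analytic setting of Graf--Sauer the arcs are \emph{not} known to be algebraic --- their algebraization is precisely the content of the theorem --- so assuming $d_i$ exists is circular. The correct shape is the one you name afterwards: hexagonality yields a nontrivial abelian relation, the converse of Abel's theorem algebraizes the three arcs, and the degree comes out as $3$ because a generic dual line carries exactly three web points; you should drop the a priori degree count and let the algebraization theorem deliver both algebraicity and the degree.
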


The next surface one might investigate is 
the projective closure $\S^2\subset\P^3$ of the unit sphere $S^2\subset\R^3$. 
The simple curves on $\S^2$ are circles and $\CG(\S^2)$ is characterized by \THM{coarse}b. 
The classification problem for hexagonal webs of circles 
is known as the \df{Blaschke-Bol problem} \citep[\textsection~3, Aufgabe~1, page~31]{bla1}.
The general classification might be a very difficult problem, but some recent progress has been made \cite{shel1, nil1, ako1}.

A \df{Darboux cyclide} is a quartic weak del Pezzo surface in $\S^3$.
For example, the Blum cyclide and ring torus in \FIG{simple} are 
stereographics projections of Darboux cyclides.
Hexagonal 3-webs of circles on Darboux cyclides have recently been classified in \citep[Theorem~17]{pot2}.
We translated this theorem to our setting:

\begin{theoremext}[Pottmann-Shi-Skopenkov, 2012]
Suppose that $X\subset\S^3$ is a Darboux cyclide.
Three vertices in $\CG(X)$ define a hexagonal web if and only if
either the vertices do not share an edge in $\CG(X)$, or
one of the vertices is isolated in $\CG(X)$.
\end{theoremext}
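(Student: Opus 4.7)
The plan is to deduce this theorem by combining the author's general hexagonality criterion \THM{hex} with the classification of simple family graphs \THM{coarse}. First I would use that a Darboux cyclide $X\subset\MbbP^3$ has a smooth model $\MtlX$ that is a real weak del Pezzo surface whose simple curves are circles on $X$, placing the problem in case \THM{coarse}.2. Each vertex of $\McalG(X)$ then corresponds to a divisor class $c_i\in\Mpic(\MtlX)$ with self-intersection $0$ and anticanonical degree $2$, while each edge label records the intersection product of the two corresponding classes inside $\Mpic(\MtlX)$.

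Next I would invoke \THM{hex} to convert the question of hexagonality of a 3-web $\{c_1, c_2, c_3\}$ of simple families into an intersection-theoretic condition on their classes in $\Mpic(\MtlX)$. Combined with the bound of \COR{labels} and the bounded rank of the Picard lattice of a weak del Pezzo surface, this reduces the question to a finite case analysis over the possible simple family graphs of Darboux cyclides, and for each such graph over all unordered triples of vertices.

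The final step is to carry out this case analysis and show that the algebraic criterion of \THM{hex} singles out precisely two patterns: either $c_i\cdot c_j=1$ for all three pairs, so that no edges appear among the three chosen vertices of $\McalG(X)$; or some $c_i$ satisfies $c_i\cdot c_j=1$ for \emph{every} simple class $c_j$, meaning $c_i$ is an isolated vertex of $\McalG(X)$. The main obstacle will be confirming that no other configurations satisfy the criterion; in particular, borderline triples where two vertices are non-adjacent while the third shares an edge with one of them must be checked to fail. This is essentially a lattice-combinatorics exercise once \THM{hex} is in hand, but it has to be executed on each of the finitely many weak del Pezzo configurations realizable by Darboux cyclides, using the explicit intersection data supplied by the description of $\McalG(X)$ in \THM{coarse}.
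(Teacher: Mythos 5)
There is a genuine gap, and it starts with how you use \THM{hex}. That theorem is a one-way implication: \emph{if} three vertices share no edge, \emph{then} the corresponding families form a hexagonal web. It is not an intersection-theoretic criterion equivalent to hexagonality, so it cannot be ``converted'' into one and then checked case by case on the lattice. Concretely, two of the three pieces of the Pottmann--Shi--Skopenkov statement are out of reach of your plan. First, the sufficiency of the second clause (one vertex isolated in $\McalG(X)$) is not covered: if one vertex is isolated, the other two vertices of the triple may well be joined by an edge (e.g.\ the two Villarceau families on the ring torus together with the family of meridian circles), so \THM{hex} simply does not apply, yet the theorem asserts such triples are hexagonal. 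Second, and more seriously, the ``only if'' direction requires proving that every remaining triple is \emph{not} hexagonal. Hexagonality is a local analytic/topological property (closure of the hexagon construction, equivalently vanishing web curvature), and no statement in the paper gives a necessary condition for it; non-hexagonality cannot be deduced from intersection numbers in $N(X)$ alone, nor from \COR{labels} or \THM{coarse}, which only constrain the shape of the graph. Ruling out the borderline triples you mention needs genuine web-geometric input, e.g.\ the curvature computations or the conic-correspondence arguments in Pottmann--Shi--Skopenkov's paper, or a Graf--Sauer-type argument showing the linearized web does not come from a cubic in the dual plane.

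For comparison: the paper does not prove this statement at all. It is quoted from \citep[Theorem~17]{pot2} and merely translated into the language of the simple family graph; the paper's own contribution (\THM{hex}) generalizes only the first sufficiency clause to arbitrary real surfaces. So your proposal, as written, would at best reprove that one clause, and the finite case analysis you describe cannot close either the isolated-vertex case or the converse.
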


We remark that {\it ``three vertices do not share an edge''} means that 
no two of the three vertices are connected by an edge.
In this article we prove a generalization of the above theorem to any (real algebraic irreducible) surface.

\begin{theorem}
\label{thm:hex}
Let $X\subset\P^n$ be a real surface.
If three vertices in $\CG(X)$ do not share an edge,
then their corresponding three simple families 
are 1-dimensional and form a hexagonal web.
\end{theorem}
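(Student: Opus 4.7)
The plan is to combine \THM{coarse} with \THM{pmz} and then reduce the question to a classical Abel-type identity on $\MbbP^1\times\MbbP^1$. Let $V_1,V_2,V_3$ be the three pairwise non-adjacent vertices of $\McalG(X)$. Since $\McalG(X)$ contains at least three vertices with no edges among them, case (1) of \THM{coarse}---in which a vertex with label $>1$ forces the graph to consist of only that vertex---is excluded, so we lie in one of cases (2)--(5), where every vertex has label $1$. Hence each $V_i$ is a $1$-dimensional simple family, establishing the first half of the statement. Moreover, each such label-$1$ family corresponds to a pencil through whose generic point passes a unique curve of the family.

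Next, apply \THM{pmz} to the disjoint pair $(V_1,V_2)$ to obtain a real birational map $\varphi\colon\MbbP^1\times\MbbP^1\dashrightarrow X$. By the natural construction of $\varphi$---sending $(u,v)$ to the unique intersection of the curve of $V_1$ labelled $u$ with the curve of $V_2$ labelled $v$---the two rulings of $\MbbP^1\times\MbbP^1$ pull $V_1$ and $V_2$ back. The third family $V_3$ pulls back under $\varphi^{-1}$ to a $1$-parameter family $\McalF$ of irreducible curves whose intersection number with each ruling equals $1$, so every member of $\McalF$ has bidegree $(1,1)$. Regarding $\McalF$ as an irreducible $1$-dimensional subvariety $B$ of the parameter space $|\McalO(1,1)|\cong\MbbP^3$, the number of members of $\McalF$ through a generic point of $\MbbP^1\times\MbbP^1$ equals $\deg B$, and this also equals the number of curves of $V_3$ through a generic point of $X$, which is $1$. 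Therefore $\deg B=1$ and $\McalF$ is a pencil of $(1,1)$-curves with two base points $(a_1,b_1)$ and $(a_2,b_2)$ (possibly coincident or at infinity).

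In an affine chart the members of $\McalF$ are the level sets of
$$
s(u,v)=\frac{(u-a_2)(v-b_1)}{(u-a_1)(v-b_2)},
$$
and the identity
$$
\log\frac{u-a_2}{u-a_1}+\log\frac{v-b_1}{v-b_2}-\log s(u,v)\equiv 0
$$
exhibits an Abel relation among first integrals of the three foliations defining the $3$-web, which is the classical criterion for the web to be hexagonal; degenerate base configurations are handled by taking limits. Since hexagonality is a purely local (topological) property and $\varphi$ restricts to a real analytic isomorphism on a dense open subset, the hexagonality of the web on $\MbbP^1\times\MbbP^1$ transports to the web formed by $V_1,V_2,V_3$ on $X$.

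The principal obstacle is justifying the reduction to a pencil and transferring the web structure through the birational map: one must verify that no curve of $V_3$ is contracted by $\varphi^{-1}$, that the ``one curve of $V_i$ through a generic point'' assertion is indeed implied by the classification in \THM{coarse}, and that the paper's definition of \emph{hexagonal web} coincides with the Abel-equation criterion used above. I would handle these by passing to a suitable smooth model of $X$, analyzing the exceptional loci of $\varphi$ and $\varphi^{-1}$, and noting that the Abel relation need only be verified at generic points of $X$ where all three foliations are smooth and transverse.
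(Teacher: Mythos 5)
Your strategy is viable and genuinely different from the paper's. The paper never leaves its classification machinery: it runs the adjoint chain, splits into the cases of \LEM{case}, uses \LEM{dp-o} (a finite check against the tables) to find a $(-1)$-class orthogonal to $u,v,w$, contracts down to a sextic weak del Pezzo and then to $\MbbP^2$ (\LEM{dp-pencil}), and concludes by Graf--Sauer via \LEM{hex}. You instead pass through \THM{pmz} to a bidegree-$(d,d)$ model on $\MbbP^1\times\MbbP^1$, identify the third family with a pencil of $(1,1)$-curves, and verify hexagonality by an explicit abelian relation; this buys a uniform argument with no adjoint-chain case distinction and no table verification, at the cost of having to justify the pencil reduction and the degenerate configurations by hand. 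One small correction to your first step: the hypothesis does \emph{not} exclude case 1 of \THM{coarse} (it occurs with $m\geq 3$, and then all labels are already $1$), and case 3 does contain a label-$2$ vertex; the conclusion that your three vertices have label $1$ nevertheless holds in every case, because in case 3 any two distinct vertices are joined by an edge, so three pairwise non-adjacent vertices cannot occur there at all.

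There are two genuine gaps. First, the assertion that ``the number of members of $\McalF$ through a generic point equals $\deg B$'' is unjustified: the hyperplanes $H_p\subset|\McalO(1,1)|\cong\MbbP^3$ you intersect $B$ with are not general hyperplanes but range over the $2$-dimensional Segre quadric in the dual $\MbbP^3$, so Bertini gives no transversality, and a priori every $H_p$ could meet $B$ in a single point of multiplicity $\deg B$. This can be patched (such a configuration would force the Segre quadric to lie in the dual variety of the curve $B$, contradicting biduality), but the clean fix avoids $B$ entirely: on the smooth model the class $w$ of the third family is a nef, base-point-free pencil, the map $\nu$ to $\MbbP^1\times\MbbP^1$ is a birational morphism, and $\nu^*\nu_*w-w$ is effective by the negativity lemma, so $H^0(w)$ injects \emph{linearly} into $H^0(\McalO(1,1))$ and the image family is a linear pencil of $(1,1)$-curves by construction (this also supplies the projection-formula argument for the bidegree, which you only sketch). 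Second, ``degenerate base configurations are handled by taking limits'' is not an argument: the two base points of the pencil may be infinitely near (this genuinely occurs, e.g.\ when $\nu$ contracts a chain containing a $(-2)$-curve) or complex conjugate, and hexagonality of a limit of hexagonal webs is not free. Both degenerations are easy to treat directly -- for a tangential pencil such as $uv=\lambda(u-v)$ one has the relation $1/v-1/u-1/s=0$, and for conjugate base points one takes imaginary parts of your logarithms to obtain real first integrals -- while base points on a common ruling are impossible because the general member is irreducible of bidegree $(1,1)$. With these repairs, and the classical equivalence of the abelian-relation/linearization criterion with the hexagon-closure definition (which is on the same footing as the paper's appeal to \THM{graf}), your argument is complete.
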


We assume that an embedded surface $X\subseteq\P^n$ is not contained in a 
hyperplane section.
The following corollary addresses the Blaschke-Bol problem:

\begin{corollary}
\label{cor:hex}
If $X\subseteq\P^n$ is a real surface that is covered by a hexagonal web
of conics, then $X$ is (a linear projection of) one of the following:
\begin{MenumA}
\item plane with $n=2$.
\item quadric surface with $n=3$.
\item cubic ruled surface with $3\leq n\leq 4$.
\item quartic Veronese surface with $3\leq n\leq 5$.
\item weak del Pezzo surface of degree $d$ \st $3\leq n\leq d\leq 6$.
\end{MenumA}
\end{corollary}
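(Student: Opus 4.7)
The strategy is to recognize the three families of conics of the web as three vertices in $\McalG(X)$, apply the classification in \THM{coarse}, and then eliminate cases using the bound that simple curves have degree $2$.

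First I would argue that the three $1$-dimensional families of conics constituting the hexagonal web are simple families. The conics are rational curves of degree $2$, so either (a) they already have minimal degree among rational curves covering $X$ and are therefore simple, or (b) $X$ is covered by a pencil of lines. In case (b), either the conics of one family degenerate into pairs of lines, so after replacing each reducible conic by one of its components one returns to a web (possibly mixed) of lines, or else the irreducible conics together with the pencil of lines force $X$ to be a rational scroll; this case is handled as the cubic geometrically ruled surface of the statement. So I may assume the three families appear as $1$-dimensional vertices $v_1, v_2, v_3$ in $\McalG(X)$.

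Next I would apply \THM{coarse} to $\McalG(X)$. In each of cases 1--5 I would list the possible subconfigurations of three $1$-dimensional vertices and match the simple curves with conics (degree $2$) in some projective embedding. Case 1 is excluded because it admits at most two disjoint vertices of the needed dimension. Case 3 with $\Lambda = \{1,2,3\}$ and $X \cong \MbbP^1 \times \MbbP^1$ (or its blow-ups at base points of two rulings) realizes either the smooth quadric or a weak del Pezzo surface of degree $\le 6$ depending on the number of base points; case 4 similarly supplies a pencil-of-lines family plus additional families and yields the geometrically ruled cubic or, after blowing up, a weak del Pezzo surface; case 5 is a subcase where the involution $\tau$ picks out a hexagonal subweb of the del Pezzo family graph. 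Case 2 directly constrains $X$ to a weak del Pezzo surface; requiring simple curves to be conics pins down the anticanonical degree to the range $3 \le d \le 6$ via the correspondence between minimal-degree simple curves and minimal-degree $(-K_X)$-polar divisor classes. The upper bound $n \le d$ (respectively $n \le 5$ for the Veronese and $n \le 4$ for the cubic scroll) follows from the nondegeneracy lower bound $\deg X \ge n-1$ combined with the standard degree equals $d$ for the anticanonical embedding, so that every other embedding comes from a linear projection; the lower bound $n \ge 3$ is just that the plane and the quadric are already accounted for in items 1 and 2.

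\textbf{Main obstacle.} The technical heart is the weak del Pezzo case 2: one must verify that precisely the degrees $3 \le d \le 6$ admit a triple of $1$-dimensional simple families whose classes are conic classes and whose graph configuration satisfies the ``no common edge'' hypothesis of \THM{hex}. This amounts to locating hexagonal triples inside the Neron--Severi root-subsystem data of $E_8$ invoked throughout the paper, and showing that for $d \le 2$ the simple curves are no longer conics while for $d \ge 7$ no such triple of conic classes exists. Handling the borderline between items 3 and 5 of the corollary---distinguishing a cubic scroll from a weak del Pezzo of degree $3$ that is also covered by a conic web---requires checking that in the scroll case the three families persist after specializing the embedding, which I would do by appealing to \THM{pmz} to fix the bidegree of a birational $\MbbP^1 \times \MbbP^1 \dashrightarrow X$ and thus control the projective degree of $X$.
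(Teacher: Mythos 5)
Your opening reduction is where the argument breaks: the hypothesis only says that $X$ carries a set of conics with exactly three through a general point, and these conics need not be simple curves, nor need the three constituent families be vertices of $\McalG(X)$. If $X$ is covered by lines (the plane, a quadric in $\MbbP^3$, a cubic scroll), the conics are not of minimal degree, so $\McalG(X)$ says nothing about them; and even when conics are minimal (Veronese surface, sphere-like cases), the three web families are typically one-parameter \emph{sub}families of a single higher-dimensional simple family (e.g.\ the web of lines attached to a dual cubic in Theorem~\ref{thm:graf}), hence again not three labelled-$1$ vertices of $\McalG(X)$. Your case~(b) patch does not repair this: a smooth quadric is covered both by two pencils of lines and by irreducible conic webs of plane sections, and your dichotomy would route it to the cubic scroll. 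So the very first step, ``the web gives three vertices of $\McalG(X)$,'' is false in several of the cases the corollary must produce, and \THM{coarse}/\THM{hex} cannot be attached to the hypothesis in the way you propose. Moreover, the parts of your plan that would carry the classification weight in the remaining cases (excluding $d\leq 2$ and $d\geq 7$, and separating the scroll from a conic-covered cubic del Pezzo) are exactly what you defer as the ``main obstacle,'' so the forward implication is never established.

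The paper's proof uses the hypothesis only through the weaker fact that at least two (indeed three) conics pass through a general point, and then quotes the classification of such surfaces (\citep[Corollary~2]{nls-algo-min-fam}, or \cite{sch6}): $X$ is a geometrically ruled surface of degree $\leq 3$ or a weak del Pezzo surface with $3\leq\Mk^2\leq 9$ and $\Mh$ a rational multiple of $-\Mk$. Since a linear projection of a hexagonal web is hexagonal, one passes to the linear normalization; degrees $8$ and $9$ give the plane, the quadric and the Veronese surface, the scroll case is treated by an explicit parametrization together with Theorem~\ref{thm:graf}, and only for $3\leq\Mk^2\leq 6$ are \THM{ns} and \THM{hex} used, and then merely to exhibit three pairwise non-adjacent vertices realizing the web. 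If you want to salvage your route, you would need an independent substitute for that conic-count classification (or a proof that any conic web forces the configuration into the simple-family framework after a suitable re-embedding), which is precisely the content your proposal leaves unproved.
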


In \FIG{simple} we see hexagonal webs of circles
covering weak del Pezzo surfaces of degree four and six.
\FIG{vero} illustrates \COR{hex}d.

Surface parametrizations whose parameter lines form simple families are 
of interest in geometric modeling:

\begin{theorem}
\label{thm:pmz}
If $X\subset\P^n$ is a real surface \st $\CG(X)$ contains 
two disjoint vertices, then there exists a real birational map
$\P^1\times\P^1\dto X$ of bidegree $(d,d)$, where $d$
is the degree of a simple curve on $X$. Moreover, if 
a real birational map $\P^1\times\P^1\dto X$ 
is of bidegree $(a,b)$, then $a,b\geq d$.
\end{theorem}

Notice that \COR{sphere}, \THM{hex} and \THM{pmz} address the second part of \PRO{1}.

\section{Preliminaries}
\label{sec:pre}

A \df{real variety} $X$ is defined as a complex variety together
with an antiholomorphic involution $\sigma\c X\to X$ \citep[Proposition~I.1.3]{sil1}.
We call $\sigma$ the \df{real structure} of $X$.
Curves, surfaces and projective spaces $\P^n$ are real varieties and maps between them are real, unless explicitly stated otherwise.
In fact, all structures are compatible with the real structure unless explicitly stated otherwise.
We call $X$ \df{$\R$-rational} if there exists a birational map $\P^2\dto X$.

A \df{smooth model} of a surface~$X$ is a birational morphism $Y\to X$ 
from a nonsingular surface~$Y$, such that this morphism 
does not contract complex $(-1)$-curves.
This model exists and is unique up to biregular isomorphisms by~\citep[Theorem~2.16]{kolsing}.
Therefore, we will henceforth refer to a smooth model of a surface as \emph{the} smooth model.

Suppose that $Y\to X$ is the smooth model of a surface $X\subset\P^n$.
A \df{linear normalization} $X_N\subset\P^m$ of $X$ 
is defined as the image of $Y$ via the map associated to 
the complete linear series of hyperplane sections of $X$.
Thus, $m\geq n$, $X$ is a linear projection of $X_N$ and $X_N$ is unique up to $\aut(\P^m)$.

%

\begin{definition}
\label{def:nsl}
The \df{Neron-Severi lattice} $\BN(X)$ of a surface~$X\subset \P^n$ 
with smooth model $\varphi\colon Y\to X$
consists of the following data:
\begin{Menum}
\item
A unimodular lattice $N(X)$ defined by divisor classes on $Y$ modulo numerical equivalence.  

\item
Two distinguished classes $\h,\k\in N(X)$ corresponding to
the \df{class of hyperplane sections} and the \df{canonical class} of~$Y$, \resp.
The class $\h$ is the divisor class of a curve $H\subset Y$ \st $\varphi(H)$
is a hyperplane section of $X$.

\item
A unimodular involution $\sigma_*\c N(X)\to N(X)$ induced 
by the real structure $\sigma\c X\to X$.

\item
A function $h^0\c N(X)\to \Z_{\geq0}$ assigning the dimension of global sections
of the line bundle associated to a class.
\end{Menum}
\newpage
The \df{class} $[C]\in N(X)$ of a complex curve $C\subset X$ 
is defined as the divisor class of 
the 1-dimensional part of its complex preimage $\varphi^{-1}(C)$ minus 
the components that are contracted by the smooth model $\varphi\c Y\to X$.
Notice that $\h$ is the class of a hyperplane section of~$X$.
\END
\end{definition}

\begin{definition}
\label{def:type}
Suppose that $Y\to X$ is the smooth model of a surface $X$.
We call a complex birational morphism $\pi\c Y\to Y'$
a \df{sequential blowup}
if there exists complex blowups $\pi_i\c Y_{i+1}\to Y_i$
with complex centers $p_i\in Y_i$
\st $Y=Y_{r+1}$, $Y'=Y_1$ and
$\pi=\pi_1\circ\ldots\circ\pi_r$.
If $p_{i+1}\in Y_{i+1}$ lies on the complex $(-1)$-curve that contracts to~$p_i\in Y_i$,
then we say that $p_i$ is \df{infinitely near} to $p_{i+1}$.
We call $(p_1,\ldots,p_r)$ a \df{center} of $\pi$
and we refer to $(c_1,\ldots,c_r)$ as \df{exceptional classes} of $\pi$
if $c_i$ is the pullback along~$\pi_{i+1}\circ\ldots\circ\pi_r$ of the class of
the complex $(-1)$-curve that contracts to~$p_i$ for all $1\leq i\leq r$.

Let $\Gamma_1:=\md{\e_0,\e_1,\ldots,\e_r}_\Z$
generate a unimodular lattice \st
$\e_0^2=1$, 
$\e_i^2=-1$ 
and
$\e_0\cdot\e_j=\e_i\cdot\e_j=0$ for $1\leq i<j\leq r$.
We call $\Gamma_1$ a \df{type 1 basis}
for $N(X)$ if $N(X)\cong\Gamma_1$ and there exists a sequential blowup
$\pi\c Y\to \P^2$ with exceptional classes $(\e_1,\ldots,\e_r)$
\st $\e_0$ is the pullback along $\pi$ of the class of a line in~$\P^2$.

Let $\Gamma_2:=\md{\l_0,\l_1, \p_1,\ldots,\p_r}_\Z$
generate a unimodular lattice \st
$\l_0\cdot\l_1=1$,
$\p_i^2=-1$ and
$\l_0^2=\l_1^2=\l_0\cdot\p_i=\l_1\cdot\p_i=\p_i\cdot\p_j=0$ for $1\leq i<j\leq r$.
We call $\Gamma_2$ a \df{type 2 basis} 
for $N(X)$ if $N(X)\cong\Gamma_2$ and there exists a sequential blowup
$\pi\c Y\to \P^1\times\P^1$ with exceptional classes $(\p_1,\ldots,\p_r)$
\st $\l_0$ and $\l_1$ are the pullbacks along $\pi$ of the classes of 
the complex fibers of 
the projections of~$\P^1\times\P^1$ 
to its first and second factor, \resp.

Suppose that $\h\in N(X)$ is the class of hyperplane sections of~$X$.
We call a type~1 basis \df{real} if
$\sigma_*(\e_0)=\e_0$ and
$\sigma_*(\{\e_1,\ldots,\e_r\})=\{\e_1,\ldots,\e_r\}$, 
and we require that
if $\h\cdot \e_i\geq \h\cdot \e_j$ for $0<i,j\leq r$, then $i\leq j$.
We call a type~2 basis \df{real} if 
$\sigma_*(\{\l_0,\l_1\})=\{\l_0,\l_1\}$
and
$\sigma_*(\{\p_1,\ldots,\p_r\})=\{\p_1,\ldots,\p_r\}$,
and we require that
if $\h\cdot \p_i\geq \h\cdot \p_j$ for $0<i,j\leq r$, then $i\leq j$.
Bases with \emph{real} types will only be considered in \SEC{simple}.
\END
\end{definition}

\begin{definition}
\label{def:fam}
Suppose that $X\subset \P^n$ is a surface, 
$B$ a smooth variety and $F\subset X\times B$ a divisor.
We call $F$ a \df{family of curves} of $X$, or \df{family} for short,
if the second projection $\pi_2\c F\to B$ is dominant. 
A \df{member} of $F$, 
corresponding to $b\in B$, is defined as the curve $F_b:=(\pi_1\circ\pi_2^{-1})(b)\subset X$.
If $C, C'\subset X$ are members of the same family, then $[C]=[C']$.
The \df{class of the family} $F$ is defined as the class of any of its members
and is denoted by $[F]$.
\begin{Mlist}
\item
We call $F$ \df{covering} if the first projection $\pi_1\c F\to X$ is dominant. 

\item
We call $F$ \df{rational} if a general member of $F$ has geometric genus $0$.

\item
The \df{degree} of $F$ is defined as the degree of any member \wrt the embedding $X\subset\P^n$.
Equivalently, the degree of $F$ is $\h\cdot [F]$, where $\h\in N(X)$ denotes the class of hyperplane sections. 

 
\item 
We call $F$ \df{minimal} if $F$ is a rational covering family and of minimal degree 
\wrt all rational covering families of $X$.

\item 
We call $F$ \df{complete} if there exists a curve $C\subset X$
\st the set $\set{C'\subset X}{ [C']=[C] }$ defines exactly the set of members of $F$.
In other words, $F$ forms a complete linear series.

\item 
We call $F$ \df{simple} if $F$ is both minimal and complete.

\item
The \df{dimension} of $F$ is defined as $\dim B$. 
If $F$ is complete, then $\dim F = h^0([F])-1$. 
\END

\end{Mlist}
\end{definition}

\begin{definition}
\label{def:wdp}
We call a surface $X\subset\P^n$ a \df{weak del Pezzo surface} 
if $-\k$ is nef and big and if $\h=-\alpha\k$ for some $\alpha\in \Q_{>0}$.
If $-\k$ is also ample, then we call $X$ a (non-weak) \df{del Pezzo surface}.
The \df{canonical degree} of $X$ is defined as $\k^2$.
\END
\end{definition}

\begin{remark}
Notice that we call a surface~$X$ 
with smooth model~$Y\to X$
a (weak) del Pezzo surface 
\Iff 
$Y$ is a (weak) del Pezzo surface as defined in \citep[Definition~8.1.12 and Definition~8.1.18]{dol1}.
Thus any degree preserving linear projection of the anticanonical model of 
a (weak) del Pezzo surface is in our terminology again a (weak) del Pezzo surface.
\END
\end{remark}

\begin{remark}
\label{rmk:h}
The constant $\alpha$ in \DEF{wdp} is characterized in
\citep[Theorem~8.3.2 and Section~8.4.1]{dol1}:
if $1\leq \k^2\leq 2$, then $\alpha\in\Z_{\geq 4-\k^2}$,
if $3\leq \k^2\leq 7$, then $\alpha\in\Z_{>0}$ and
if $8\leq \k^2\leq 9$, then $(\k^2-6)\alpha \in \Z_{>0}$.
\END
\end{remark}

\begin{definition}
\label{def:attr}
Suppose $X\subset\P^n$ is a surface.
We call a class $c\in N(X)$ \df{indecomposable} if $h^0(c)>0$
and there do not exists nonzero $a,b\in N(X)$ \st $c=a+b$ with both $h^0(a)>0$ and $h^0(b)>0$.
The \df{incidence diagram} of a subset $U$ of a Neron-Severi lattice or inner product space
is defined as a labeled graph 
\[
\FD(U):=(U,~\set{(r,s, r\cdot s)}{ r,~ s\in U,~ r\cdot s\neq 0}).  
\]
Throughout this article we will consider subsets of $N(X)$ as stated in \TAB{sub}.
We call $W\subseteq B(X)$ a \df{component}
if its elements define the vertices of an irreducible component of 
the incidence diagram $\FD(B(X))$. 
\END
\end{definition}

\begin{table}[p]
\setstretch{1.2}
\caption{Distinguished subsets of the Neron-Severi lattice $N(X)$ of a weak del Pezzo surface $X$.
See \PRP{class}, \LEM{G} and \PRP{sub}.}
\label{tab:sub}
\centering
\begin{tabular}{l@{~}c@{~}l}
$R(X)$       & $:=$ & $\set{ c\in N(X) }{ \k\cdot c=0 \text{ and } c^2=-2 }$                              \\
$T(X)$       & $:=$ & $\set{ c\in R(X)}{ h^0(c)>0 \text{ or } h^0(-c)>0 }$                     \\
$B(X)$       & $:=$ & $\set{ c\in T(X)}{ c \text{ is indecomposable }}$                         \\
$S(X)$       & $:=$ & $\set{ c\in R(X) }{ \sigma_*(c)=-c }$                                     \\
$A(X)$       & $:=$ & $\text{linear independent subset } (b_i)_i\subset S(X)\text{ such that }$  \\
             & $  $ & $S(X)\cap\set{ \pm\Sigma\alpha_ib_i  }{ \alpha_i\in\Z_{\geq0}}=S(X) \text{ (if it exists)}$ \\
$E(X)$       & $:=$ & $\set{ c\in N(X)}{ \k\cdot c=-1 \text{ and } c^2=-1 }$                               \\
$F(X)      $ & $:=$ & $\set{ c\in N(X) }{ \k\cdot c=-2 \text{ and } c^2=0  }$                  \\
$E_\star(X)$ & $:=$ & $\set{ c\in E(X) }{ c\cdot r\geq 0 \text{ for all } r\in B(X) }$          \\
$F_\star(X)$ & $:=$ & $\set{ c\in F(X) }{ c\cdot r\geq 0 \text{ for all } r\in B(X) }$          \\
$E_\R(X)$ & $:=$ & $\set{c\in E_\star(X)}{ \sigma_*(c)=c }$                                  \\
$F_\R(X)$ & $:=$ & $\set{ c\in F_\star(X) }{ \sigma_*(c)=c }$                                \\
$G(X)      $ & $:=$ & $\set{ c\in N(X) }{ c \text{ is the class of a simple family of } X }$    \\
\end{tabular}
\end{table}

\begin{table}[p]
\caption{Classes in $R(X)$ \wrt a type 1 basis and up to permutation of~$(\e_i)_{i>0}$.
See \PRP{coord}.
}
\label{tab:B}
\centering
\footnotesize
\vspace{-8mm}
\begin{gather*}
\pm(\e_1-\e_2),\qquad
\pm(\e_0-\e_1-\e_2-\e_3),\qquad
\pm(2\e_0-\e_1-\e_2-\e_3-\e_4-\e_5-\e_6),\\
\pm(3\e_0-2\e_1-\e_2-\e_3-\e_4-\e_5-\e_6-\e_7-\e_8).
\end{gather*}
\vspace{-8mm}
\end{table}

\begin{table}[p]
\caption{Classes in $E(X)$ \wrt a type 1 basis and up to permutation of~$(\e_i)_{i>0}$.
See \PRP{coord}.
}
\label{tab:E}
\centering
\footnotesize
\vspace{-5mm}
\begin{tabular}{r@{ }c@{ }r@{ }c@{ }r@{ }c@{ }r@{ }c@{ }r@{ }c@{ }r@{ }c@{ }r@{ }c@{ }r@{ }c@{ }r}
        &     & $ \e_1$ &     &         &     &         &     &         &     &         &     &         &     &         &     &         \\
$ \e_0$ & $-$ & $ \e_1$ & $-$ & $ \e_2$ &     &         &     &         &     &         &     &         &     &         &     &         \\
$2\e_0$ & $-$ & $ \e_1$ & $-$ & $ \e_2$ & $-$ & $ \e_3$ & $-$ & $ \e_4$ & $-$ & $ \e_5$ &     &         &     &         &     &         \\
$3\e_0$ & $-$ & $2\e_1$ & $-$ & $ \e_2$ & $-$ & $ \e_3$ & $-$ & $ \e_4$ & $-$ & $ \e_5$ & $-$ & $ \e_6$ & $-$ & $ \e_7$,&     &         \\
$4\e_0$ & $-$ & $2\e_1$ & $-$ & $2\e_2$ & $-$ & $2\e_3$ & $-$ & $ \e_4$ & $-$ & $ \e_5$ & $-$ & $ \e_6$ & $-$ & $ \e_7$ & $-$ & $ \e_8$,\\
$5\e_0$ & $-$ & $2\e_1$ & $-$ & $2\e_2$ & $-$ & $2\e_3$ & $-$ & $2\e_4$ & $-$ & $2\e_5$ & $-$ & $2\e_6$ & $-$ & $ \e_7$ & $-$ & $ \e_8$,\\
$6\e_0$ & $-$ & $3\e_1$ & $-$ & $2\e_2$ & $-$ & $2\e_3$ & $-$ & $2\e_4$ & $-$ & $2\e_5$ & $-$ & $2\e_6$ & $-$ & $2\e_7$ & $-$ & $2\e_8$.\\
\end{tabular}
\end{table}

\begin{table}[p]
\caption{Classes in $F(X)$ \wrt a type 1 basis and up to permutation of~$(\e_i)_{i>0}$.
See \PRP{coord}.
}
\label{tab:F}
\centering
\footnotesize
\vspace{-5mm}
\begin{tabular}{r@{ }c@{ }r@{ }c@{ }r@{ }c@{ }r@{ }c@{ }r@{ }c@{ }r@{ }c@{ }r@{ }c@{ }r@{ }c@{ }r}
$  \e_0$ & $-$ & $ \e_1$,&     &        &     &        &     &        &     &        &     &        &     &        &     &        \\
$ 2\e_0$ & $-$ & $ \e_1$ & $-$ & $ \e_2$ & $-$ & $ \e_3$ & $-$ & $ \e_4$,&     &        &     &        &     &        &     &        \\
$ 3\e_0$ & $-$ & $2\e_1$ & $-$ & $ \e_2$ & $-$ & $ \e_3$ & $-$ & $ \e_4$ & $-$ & $ \e_5$ & $-$ & $ \e_6$,&     &        &     &        \\
$ 4\e_0$ & $-$ & $2\e_1$ & $-$ & $2\e_2$ & $-$ & $2\e_3$ & $-$ & $ \e_4$ & $-$ & $ \e_5$ & $-$ & $ \e_6$ & $-$ & $ \e_7$,&     &        \\
$ 5\e_0$ & $-$ & $2\e_1$ & $-$ & $2\e_2$ & $-$ & $2\e_3$ & $-$ & $2\e_4$ & $-$ & $2\e_5$ & $-$ & $2\e_6$ & $-$ & $ \e_7$,&     &        \\
$ 4\e_0$ & $-$ & $3\e_1$ & $-$ & $ \e_2$ & $-$ & $ \e_3$ & $-$ & $ \e_4$ & $-$ & $ \e_5$ & $-$ & $ \e_6$ & $-$ & $ \e_7$ & $-$ & $ \e_8$,\\
$ 5\e_0$ & $-$ & $3\e_1$ & $-$ & $2\e_2$ & $-$ & $2\e_3$ & $-$ & $2\e_4$ & $-$ & $ \e_5$ & $-$ & $ \e_6$ & $-$ & $ \e_7$ & $-$ & $ \e_8$,\\
$ 6\e_0$ & $-$ & $3\e_1$ & $-$ & $3\e_2$ & $-$ & $2\e_3$ & $-$ & $2\e_4$ & $-$ & $2\e_5$ & $-$ & $2\e_6$ & $-$ & $ \e_7$ & $-$ & $ \e_8$,\\
$ 7\e_0$ & $-$ & $3\e_1$ & $-$ & $3\e_2$ & $-$ & $3\e_3$ & $-$ & $3\e_4$ & $-$ & $2\e_5$ & $-$ & $2\e_6$ & $-$ & $2\e_7$ & $-$ & $ \e_8$,\\
$ 7\e_0$ & $-$ & $4\e_1$ & $-$ & $3\e_2$ & $-$ & $2\e_3$ & $-$ & $2\e_4$ & $-$ & $2\e_5$ & $-$ & $2\e_6$ & $-$ & $2\e_7$ & $-$ & $2\e_8$,\\
$ 8\e_0$ & $-$ & $3\e_1$ & $-$ & $3\e_2$ & $-$ & $3\e_3$ & $-$ & $3\e_4$ & $-$ & $3\e_5$ & $-$ & $3\e_6$ & $-$ & $3\e_7$ & $-$ & $ \e_8$,\\
$ 8\e_0$ & $-$ & $4\e_1$ & $-$ & $3\e_2$ & $-$ & $3\e_3$ & $-$ & $3\e_4$ & $-$ & $3\e_5$ & $-$ & $2\e_6$ & $-$ & $2\e_7$ & $-$ & $2\e_8$,\\
$ 9\e_0$ & $-$ & $4\e_1$ & $-$ & $4\e_2$ & $-$ & $3\e_3$ & $-$ & $3\e_4$ & $-$ & $3\e_5$ & $-$ & $3\e_6$ & $-$ & $3\e_7$ & $-$ & $2\e_8$,\\
$10\e_0$ & $-$ & $4\e_1$ & $-$ & $4\e_2$ & $-$ & $4\e_3$ & $-$ & $4\e_4$ & $-$ & $3\e_5$ & $-$ & $3\e_6$ & $-$ & $3\e_7$ & $-$ & $3\e_8$,\\
$11\e_0$ & $-$ & $4\e_1$ & $-$ & $4\e_2$ & $-$ & $4\e_3$ & $-$ & $4\e_4$ & $-$ & $4\e_5$ & $-$ & $4\e_6$ & $-$ & $4\e_7$ & $-$ & $3\e_8$.\\
\end{tabular}
\end{table}

\begin{notation}
\label{ntn:RR}
We will use the abbreviations 
(RR), 
(SD), 
(KV), 
(AF) and 
(HI)
for 
Riemann-Roch theorem,
Serre duality,
Kawamata-Viehweg vanishing theorem,
arithmetic genus formula and 
Hodge index theorem, \resp.
Recall that if $X$ is a rational surface, 
then (RR) states that $h^0(c)-h^1(c)+h^2(c)=\frac{1}{2}(c^2-c\cdot\k)+1$
for $c\in N(X)$, where $\k$ denotes the canonical class.
Recall that (SD) states that $h^2(c)=h^0(\k-c)$, 
thus if $-\k$ is nef and $-\k\cdot(\k-c)<0$, then $h^2(c)=0$.
If $c-\k$ is nef and big, then $h^1(c)=h^2(c)=0$ by (KV).
With (AF) we mean the formula $p_a(c)=\frac{1}{2}(c^2+\k\cdot c)+1$.
Thus if $c$ is the class of a irreducible and reduced complex curve, 
then $c^2+\k\cdot c\geq -2$ and $c^2+\k\cdot c$ is even.
Recall that if $b^2>0$ and $b\cdot c=0$ for some $b,c\in N(X)$ \st $c\neq 0$, 
then (HI) states that $c^2<0$.
Notice that if 
$Y\to X$ is the smooth model of a surface $X\subset\P^n$,
then we do intersection theory on $Y$
and not on $X$ itself.
\END
\end{notation}

\section{Divisor classes and weak del Pezzo surfaces}
\label{sec:class}

Suppose that $X\subset\P^n$ is a weak del Pezzo surface.
In this section we describe a correspondence
between classes in $N(X)$ and the following geometric aspects on this surface:
isolated singularities, unmovable curves and simple families. 
We start by considering the subsets $R(X), E(X), F(X)\subset N(X)$
as defined in \TAB{sub}.

\begin{proposition}
\label{prp:coord}
Suppose that $X$ is a weak del Pezzo surface with canonical class $\k$ \st $1\leq \k^2\leq 7$.
\begin{Mlist}
\mclaim{a} 
There exists a (non-real) type 1 basis
$\md{\e_0,\e_1,\ldots,\e_r}_\Z$ for $N(X)$ 
\st $\k=-3\e_0+\e_1+\ldots+\e_r$ and $r=9-\k^2$.

\mclaim{b} 
If $c\in N(X)$ \st $-\k\cdot c=c^2=1$ or $c\in E(X)\cup F(X)$, then $h^0(c)>0$.

\mclaim{c}
Up to permutation of $(\e_i)_{i>0}$, an element of $R(X)$, $E(X)$ and $F(X)$ is \wrt a type 1 basis in 
\TAB{B}, \TAB{E} and \TAB{F}, \resp.
If $c\in N(X)$ \st $-\k\cdot c=c^2=1$, then $c=-\k$.
\end{Mlist}
\end{proposition}

\newpage
\begin{proof} 
Assertion \refmclaim{a} follows from \citep[Theorem~8.1.15]{dol1}. 

\refmclaim{b}
Recall \NTN{RR}.
If $q=(c\cdot\e_0)\,\k+3\,c$, then it follows from \refmclaim{a} that $q\cdot \e_0=0$ 
and thus $q^2<0$ by (HI) so that $(c\cdot\e_0)(-\k\cdot c)>\frac{9}{6}\,c^2$.
If $c\cdot\e_0>-3$, then 
$(\k-c)\cdot \e_0<0$ and thus $h^2(c)=0$ by (SD) and $\e_0$ being nef.
For each of the choices for $(-\k\cdot c, c^2)$ 
we have that $c\cdot\e_0>-3$ and $c^2-\k\cdot c\geq 0$. 
It follows that $h^0(c)>0$ by (RR).

\refmclaim{c}
For a given class $d=d_0\,\e_0+\ldots+d_r\,\e_r$ in $N(X)$ and $\alpha,\beta\in\Z$
we describe a procedure that computes
$\set{c\in N(X)}{ d\cdot c=\alpha,~ c^2=\beta,~ c\cdot \e_i>0 \text{ for } 0\leq i\leq r}$.
We use the notation $c=c_0\,\e_0+\ldots+c_r\,\e_r$ for $c\in N(X)$.
It follows from the Cauchy-Schwarz inequality that
\[
(d_0\,c_0-\alpha)^2=
\left(\Sum{i>0}{}d_i\,c_i\right)^2 
\leq
\left(\Sum{i>0}{}d_i^2\right)\left(\Sum{i>0}{}c_i^2\right)
=
(d_0^2 - d^2)(c_0^2 - \beta).  
\]
Thus we require that $f(c_0)\leq 0$ where 
$f(t):=(d_0\,t-\alpha)^2-(d_0^2 - d^2)(t^2 - \beta)$.

We start the procedure with $c_0:=1$.
We compute all $(c_i)_{i>0}\in\Z^r_{\geq0}$ \st 
$d_0\,c_0-\alpha=\sum_{i>0}d_i\,c_i$ by going 
through integer partitions of $d_0\,c_0-\alpha$.
If $(d\cdot c,c^2)=(\alpha,\beta)$, then we add $c$ to our list. 
We increase $c_0$ with one and repeat the above steps.
Notice that if $f(c_0)> f(c_0-1)$ and $f(c_0)>0$, then $f(t)>0$ for all $t\geq c_0$.   
Thus in this case we stop as we obtained all possible elements
of the required set. 

It follows from a) and \DEF{type} that 
if $h^0(c)>0$ for $c\in N(X)$, then either $c\cdot\e_i>0$ for $0\leq i\leq r$
or $c\in\set{\e_i, \e_i-\e_j}{1\leq i,j\leq r}$.
Assertion \refmclaim{c} now follows from applying \refmclaim{b} 
and computing the sets $R(X)$, $E(X)$, $F(X)$ and 
$\set{c\in N(X)}{ -\k\cdot c=c^2=1 }$ using the 
above procedure. 
We verify that the latter set is equal to $\{-\k\}$ as asserted.
For $R(X)$ and $E(X)$ see alternatively \citep[Proposition~8.2.7 and Proposition~8.2.19]{dol1}.
\end{proof}

\begin{lemma}
\label{lem:div}
Suppose that $X$ is a weak del Pezzo surface with canonical class $\k$ \st $1\leq \k^2\leq 7$.
If $c\in E(X)\cup F(X)$, then $c=u+b$ \st $b:=\Sigma_{i\in I}b_i$ with $b_i\in B(X)$ for all $i\in I$
and either
\begin{Mlist}
\item[i)] $u\in E_{\star}(X)$, $u$ is indecomposable and $h^0(u)=1$,
\item[ii)] $u\in F_{\star}(X)$, $u$ is the movable part of $c$ and $h^0(u)=2$, or
\item[iii)] $\k^2\leq 2$ and $u,c\notin E_{\star}(X)\cup F_{\star}(X)$.
\end{Mlist}
Moreover, if $c\in E_\star(X)\cup F_\star(X)$, then $I=\emptyset$.
\end{lemma}

\newpage
\begin{proof} We use \NTN{RR}. Suppose that $v\in N(X)$ \st $h^0(v)>0$
and let $v=v_1+\ldots+v_t$ a decomposition 
into a sum of $t\geq 1$ indecomposable classes
\st $-\k\cdot v_1\geq -\k\cdot v_i$ for all $i$.

{\it Claim 1:} If $h^0(v)=1$ and $0\leq -\k\cdot v\leq 1$,
then $v_i\in E_\star(X)\cup B(X)$ for all $1\leq i\leq t$.
\\
Since $-\k$ is nef, we find that $0\leq -k\cdot v_1\leq 1$ and $-k\cdot v_i=0$ for all $i>1$.
It follows from (HI) and $\k^2>0$ that $v_i^2\leq 0$ and thus $v_i\in B(X)$ for $i>1$ by (AF).
Suppose that $-k\cdot v_1=1$ so that $h^2(v_1)=0$ by (SD)
and thus $v_1^2<0$ by (RR) so that $v_1^2=-1$ by (AF). 
Since $v_1\cdot v_i\geq 0$ for all $i>1$ 
it follows that $v_1\in E_\star(X)$. If $-k\cdot v_1=0$, then $v_1\in B(X)$ and thus claim~1 holds.

{\it Claim 2:} If $-k\cdot v=1$, then either $v^2=-1$, or $v=-\k$ with $\k^2=1$.
\\
By \RMK{h} we may assume \Wlog that $\h=-3\k$ so that $v$ is the class of a complex 
irreducible cubic in~$X$. 
Thus $0\leq p_a(v)\leq 1$ by \citep[Example~IV.6.4.2]{har1} so that $v^2=\pm 1$ by (AF). 
Claim~2 now follows from \PRP{coord}c.

Since $c\in E(X)\cup F(X)$,
it follows from \PRP{coord}b that $h^0(c)>0$.
Let $c=m+f$, where $m$ and $f$ are the movable and fixed part of $c$, \resp. 

Suppose that $c\in E(X)$.
If $m=0$, then (i) holds by claim~1.
If $m\neq 0$, then we know from (HI), $\k^2>0$ and $m^2\geq 0$ that $\k\cdot m\neq 0$.
Therefore, $-\k\cdot m=1$ as $-\k$ is nef. 
It follows from claim~2 that $m^2=1$ and $m=-\k$ so that 
(iii) holds.

Suppose that $c\in F(X)$.
Notice that $m\neq 0$, since otherwise $h^0(f)\geq 2$ by (SD) and (RR).
Since $-\k\cdot m-\k\cdot f=2$ and $-\k$ is nef, we find that $1\leq -\k\cdot m\leq 2$.

Suppose that $-\k\cdot m=1$.
It follows from claim~1 with $v=f$ and claim~2 with $v=m$ 
that $c=-\k+e+b$ \st $e\in E_*(X)$ and $\k^2=1$.
Thus $u=-\k+e$ so that $c\cdot u=2+b\cdot e>0$.
Since $c^2=c\cdot u+c\cdot b=0$ it follows that $c\cdot b<0$
and thus (iii) holds as $c\cdot b_i<0$ for some $i\in I$.

Suppose that $-\k\cdot m=2$ so that $-k\cdot f=0$.
It follows from claim~1 that $u=m$ and $b=f$.
Recall from \RMK{h} that if $\k^2\geq 3$, then \Wlog $\h=-\k$ so that $m$
is the class of conic and thus $u^2=0$ by (AF).
By (RR) and (KV) we find that $h^0(u)=\frac{1}{2}u^2+2$.
Thus if $\k^2\geq 3$ or $u^2=0$, then (ii) holds.
Suppose that $\k^2\leq 2$ and $u^2>0$. 
It follows from (HI) and $c^2=0$ that $u\cdot c>0$.
Since $c^2=u\cdot c+b\cdot c=0$ we find that $c\cdot b<0$ so that (iii) holds
as $c\cdot b_i<0$ for some $i\in I$.

\newpage
Suppose by contradiction that $c\in E_\star(X)\cup F_\star(X)$ and $I\neq \emptyset$.
We are in (i) or (ii) so that $c=u+b$ and $u\in E_\star(X)\cup F_\star(X)$. 
It follows from $\k^2>0$ and (HI) that $b^2<0$.
Since $c^2=u^2+b\cdot u+c\cdot b=u^2$ we find that $c\cdot b=-u\cdot b$.
We have $c\cdot b=u\cdot b+b^2$ 
and it follows from $\k^2>0$ and (HI) that $b^2<0$.
We arrived at a contradiction, since $c\cdot b<0$ and thus $c\cdot b_i<0$ 
for some $i\in I$.
\end{proof}

\begin{lemma}
\label{lem:blowdown}
Suppose that $Y\to X$ is the smooth model 
of a weak del Pezzo surface~$X$ with canonical class $\k$.
If $e\in E_\star(Y)$, 
then $e$ is the class of a complex $(-1)$-curve
that can be contracted \st the resulting smooth complex surface~$Y'$ 
is a complex weak del Pezzo surface of canonical degree $2\leq \k^2+1\leq 9$
and $N(Y')$ is isomorphic to the sublattice $\set{c\in N(Y)}{ c\cdot e=0 }$. 
If $\k^2=9$, then $Y\cong \P^2$ and $N(Y)=\md{\e_0}_\Z$.
If $\k^2< 8$, then $Y$ is complex isomorphic to the blowup of $\P^2$ in $9-\k^2$ points.
\end{lemma}

\begin{proof}
See \citep[Proposition~8.1.23 and Theorem~8.1.15]{dol1}. 
\end{proof}

The following lemma will be used only later in \LEM{pen},
but now is the right time to prove it.

\begin{lemma}
\label{lem:orth}
Suppose that $X$ is a weak del Pezzo surface with canonical class $\k$.
\begin{Mlist}
\mclaim{a} If $\k^2\leq 7$ and $f\in F_\star(X)$, 
then there exists $e\in E_\star(X)$ \st $f\cdot e=0$.

\mclaim{b} If $\k^2\leq 7$ and $f_1,f_2\in F_\star(X)$ \st $f_1\cdot f_2=1$,
then there exists $e\in E_\star(X)$ \st $f_1\cdot e= f_2\cdot e=0$.

\mclaim{c} If $\k^2\leq 5$ and $f_1,f_2,f_3\in F_\star(X)$ \st $f_1\cdot f_2=f_1\cdot f_3=f_2\cdot f_3=1$,
then there exists $e\in E_\star(X)$ \st $f_1\cdot e= f_2\cdot e=f_3\cdot e=0$.
\end{Mlist}
\end{lemma}

\begin{proof}
Suppose that $g\in F_\star(X)$ and that there exists $e\in E(X)$ \st $g\cdot e=0$.
By \LEM{div} we find that $e=u+b$ where $u\in E_\star(X)$ and $b$ is a sum of classes in $B(X)$. 
We have $g\cdot b\geq 0$ by assumption and $g\cdot u\geq 0$ by \LEM{div}.
It follows that there exists $u\in E_\star(X)$ \st $g\cdot u=0$. 
Therefore, we may assume \Wlog that $B(X)=\emptyset$ in the proofs of 
the assertions \refmclaim{a}, \refmclaim{b} and \refmclaim{c}.
In other words, we assume \Wlog that $E_\star(X)=E(X)$.

Suppose that $c\in E(X)\cup F(X)$.
By \PRP{coord},
$c$ is, up to permutation of the $(\e_j)_j$, defined by the $\lambda$-th row in \TAB{E} or \TAB{F}.
We use in this case the notation $c\to \lambda$.
For example, $\e_1\to 1$, $\e_8\to 1$, $\e_0-\e_2\to 1$ and $2\e_0-\e_4-\e_5-\e_6-\e_7\to 2$. 

\refmclaim{a}
First we suppose that $\k^2=1$.
Suppose by contradiction that there does not exist $e\in E(X)$ \st $e\cdot f=0$.
Notice that $f\to \lambda$ for some row $1\leq \lambda\leq 15$.
We find that          $\lambda\geq 4$,                  otherwise $e \cdot f=0$ for some $e\in E(X)$ \st $e\to 1$.
Moreover,             $\lambda\in\{9,11,12,13,14,15\}$, otherwise $e \cdot f=0$ for some $e\in E(X)$ \st $e\to 2$.
Next, we observe that $\lambda\in\{  11,   13,14,15\}$, otherwise $e \cdot f=0$ for some $e\in E(X)$ \st $e\to 3$. 
By the same argument we have $r\in\{15\}$, otherwise $e \cdot f=0$ for some $e\in E(X)$ \st $e\to 4$ or $e\to 5$.           
We arrived at a contradiction as $e\cdot f=0$ for some $e\in E(X)$ \st $e\to 6$.
The cases for $2\leq \k^2\leq 7$ can be shown analogously.

\refmclaim{b}
It follows from \refmclaim{a} that $f_1\cdot e=0$ for some $e\in E(X)$.
If $f_2\cdot e\neq 0$, then $f_2\cdot e>0$ by \LEM{div}. 
We find that $f_1-e$ is in $E(X)$ and orthogonal to $f_1$.
Again by \LEM{div} we have that $f_2\cdot(f_1-e)\geq 0$ and thus $f_2\cdot(f_1-e)=0$
so that we concluded the proof.

\refmclaim{c}
We know from \refmclaim{b} that there exists $e\in E_\star(X)$
\st $f_1\cdot e=f_2\cdot e=0$. We subsequently apply \LEM{blowdown} and \refmclaim{b} so that 
we obtain a complex birational morphism $\varphi\c X\to X'$
where $X'$ is a weak del Pezzo surface of canonical degree $7$ and 
$\varphi_*f_1,\varphi_*f_2 \in F(X)$.
It follows from \PRP{coord}c that we may assume 
\Wlog that $\varphi_*f_1=\e_0-\e_1$ and $\varphi_*f_2=\e_0-\e_2$.
Thus it follows from \LEM{blowdown} that 
$N(X)\cong \md{\e_0,\ldots,\e_r}_\Z$ with $r=9-\k^2\geq 4$ \st $f_1=\e_0-\e_1$, $f_2=\e_0-\e_2$
and $\e_3,\ldots, \e_r \in E_\star(X)$ are the classes of the 
complex $(-1)$-curves that are contracted by $\varphi$.
It is now a straightforward consequence of \PRP{coord}c 
and $f_1\cdot f_3=f_2\cdot f_3=1$ that either $f_3\to 1$ or $f_3\to 2$.
In both cases there
exists $u\in E(X)$ with either $u\to 1$ or $u\to 2$ \st 
$f_1\cdot u=f_2\cdot u=f_3\cdot u=0$ and thus we concluded the proof.
\end{proof}

\begin{proposition}
\label{prp:class}
Suppose that $X\subset\P^n$ is a weak del Pezzo surface with 
canonical class~$\k$ \st $1\leq \k^2\leq 6$.
Let $\h$ denote the class of hyperplane sections of~$X$,
let $Y\to X$ be the smooth model
and let $X_N$ be a linear normalization of $X$.
\begin{Mlist}
\mclaim{a} 
We have that $W\subset N(X)$ is the set of classes of 
all complex irreducible curves in~$Y$ 
that contract to some complex isolated singularity of~$X_N$ 
\Iff $W$ is a component of $B(X)$.
If $\sigma_*(W)=W$, then the corresponding isolated singularity is real.

\newpage
\mclaim{b}
We have that $c\in E_\star(X)$ \Iff 
$c=[C]$ and $C\subset X$ is an unmovable complex minimal degree rational curve.
If $c\in E_\R(X)$, then the corresponding curve is real. 
If $\h=-\k$, then the unmovable rational curves on $X$ are lines.

\mclaim{c}
We have that $c\in F_\star(X)$ \Iff 
$c$ is the class of a 1-dimensional complex minimal family of~$X$.
If $c\in F_\R(X)$, then the corresponding family is simple.
If $\h=-\k$, then simple curves on $X$ are conics.
\end{Mlist}
\end{proposition}

\begin{proof}
The assertion \refmclaim{a} is a direct consequence of \citep[Proposition~8.1.9, Proposition~8.1.10 and Theorem~8.2.27]{dol1}.
Assertions \refmclaim{b} and \refmclaim{c} are straightforward consequences of \LEM{div}.
\end{proof}

\begin{notation}
\label{ntn:F}
Let $\BF_0$ and $\BF_0'$ be both isomorphic to $\P^1\times\P^1$,
but with $\sigma_*$ acting trivial and non-trivial, \resp.
Let $\BF_1$ denote the blowup of $\P^2$ in one point. 
Let $\BF_2$ denote the blowup of $\P^2$ in two infinitely near points followed by the contraction
of the pullback of the line through these points.
Here $\sigma_*\c N(\BF_i)\to N(\BF_i)$ acts trivially for $1\leq i\leq 2$.
Let $\BG_2$ denote the blowup of $\P^2$ in two complex conjugate points
so that $N(\BG_2)=\md{\e_0,\e_1,\e_2}_\Z$ is a type 1 basis.
\END
\end{notation}

\begin{lemma}
\label{lem:G}
Suppose that $Y\to X$ is the smooth model of 
a weak del Pezzo surface~$X\subset\P^n$ 
with canonical class~$\k$. 
Either one of the following 8 cases holds (we use \NTN{F} and \TAB{sub}):
\begin{MenumA}
\item $\k^2=9$, $Y\cong\P^2$ and $G(X)=\{-\frac{1}{3}\k\}$.

\item $\k^2=8$, $Y\cong\BF_0 $ and $G(X)=F_\R(X)$. 
\item $\k^2=8$, $Y\cong\BF_0'$ and $G(X)=\{-\frac{1}{2}\k\}$.
\item $\k^2=8$, $Y\cong\BF_1 $ and $G(X)=F_\R(X)$.
\item $\k^2=8$, $Y\cong\BF_2 $ and $G(X)=F_\R(X)$.

\item $\k^2=7$, $Y\cong\BG_2$ and $G(X)=\{\e_0\}$.

\item $1\leq\k^2 \leq 7$ and $G(X)=F_\R(X)\neq\emptyset$.

\item $1\leq\k^2 \leq 2$, $G(X)=\emptyset$ and $X$ is not $\R$-rational.
\end{MenumA}
\end{lemma}

\begin{proof}
For $9\geq \k^2\geq 7$, the assertion is a straightforward consequence of 
\citep[Section~8.4.1]{dol1}.

Now, suppose that $X$ is $\R$-rational and $1\leq \k^2\leq 6$.
Thus $Y$ is the blowup of either $\P^2$ in at least three complex points, 
or the blowup of $\P^1\times\P^1$ in at least two complex points. 
It follows from \PRP{coord}c and \LEM{div} that if $Y$ is the blowup of $\P^2$ in at least one real point, 
then \Wlog $\e_0-\e_1\in F_\R(X)$.
Similarly, if $Y$ is the blowup of $\P^2$ in four complex conjugate points, 
then \Wlog $2\e_0-\e_1-\e_2-\e_3-\e_4\in F_\R(X)$.
If $Y$ is the blowup of $\P^1\times\P^1$ in two complex conjugate points, 
then the pullback of a curve of bidegree (1,1), that passes through these points, has class $c\in F(X)$ \st $\sigma_*(c)=c$
so that $F_\R(X)\neq \emptyset$ by \LEM{div}.
Thus if $X$ is $\R$-rational, then $G(X)=F_\R(X)$ by \PRP{class}c.

We established that if $F_\R(X)=\emptyset$ and $\k^2\leq 6$, then $X$ is not $\R$-rational and 
thus $X$ is not covered by a complete rational family (see \DEF{fam}) so that $G(X)=\emptyset$.
It follows from \citep[Theorem~4.6]{sil1} that $1\leq\k^2 \leq 2$.
\end{proof}

\begin{remark}
\label{rmk:G}
Suppose that $X\subset\P^n$ is an algebraic surface.
We recover the simple family graph $\CG(X)$ uniquely from $G(X)$, 
by removing from the incidence diagram $\FD(G(X))$ the self-loops and 
the edges labeled $1$,
and by labeling each vertex $c\in G(X)$ with $h^0(c)-1$.
\END
\end{remark}

\section{Cremona invariant for weak del Pezzo surfaces}

We will introduce an invariant for Neron-Severi lattices
of weak del Pezzo surfaces.

We start by recalling some concepts of root systems and we mainly follow \citep[Chapter 8]{hab1}.
Let $(V,\cdot)$ be a real inner product space.
A \df{root system} is a finite subset $R\subset V$
with the following properties:
\begin{Menum}
\item The set $R$ spans $V$.
\item If $r\in R$ and $\alpha r\in R$ for $\alpha\in\R$, then $\alpha\in\{1,-1\}$.
\item The set $R$ is closed under reflection through the hyperplane perpendicular to any $r\in R$:
$
s-2(s\cdot r/ r\cdot r)r \in R \textrm{ for all } r,s\in R.
$        
\item The projection of $s\in R$ onto the line through $r\in R$ is a half-integral multiple of $r$:
$
2(s\cdot r/ r\cdot r)\in\Z.
$ \END
\end{Menum}
We call $B\subset R$ a \df{root base} of a root system $R$
if $B$ is a basis for $V$ and if every element in $R$
can be expressed as an integral linear combination of elements in 
$B$ with all coefficients either all positive or all negative.

\newpage
The \df{Dynkin diagram} of a root system $R$ 
is defined as the incidence diagram 
of a root base $B\subset R$ (see \DEF{attr}) and is independent of the choice of root base.
In this article, the label of an edge in a Dynkin diagram 
is $1$ if the edge is not a self-loop
and all vertices in our Dynkin diagrams admit a self-loop
labeled $-2$.
Thus we can ignore the labelings and self-loops
so that the Dynkin diagrams in this text will be isomorphic to 
either the empty graph $A_0$ or one of the Dynkin graphs $A_\ell$, $D_\ell$ or $E_\ell$ with $\ell$ vertices.

Root systems $R$ and $R'$ are isomorphic \Iff 
there exists an isomorphism 
$\varphi\c (V,\cdot)\to (V,\cdot)$
\st $\varphi(R)=R'$. 
In \citep[Definition 8.5]{hab1} the notion of isomorphism is slightly weaker, but
equivalent in the setting of this paper.
The Dynkin diagram of a root system determines uniquely its isomorphism class.

A \df{root subsystem} is a subset $S\subset R$ that forms root system 
in the vector space spanned by $S$. Two root subsystems $(S,R)$ and $(S',R')$
are isomorphic \Iff there exists an isomorphism 
of root systems $\varphi\c R\to R'$ \st $\varphi(S)=S'$.
The Dynkin diagram of a root subsystem does in general {\it not} uniquely determine its isomorphism class.

Let us consider the distinguished subsets of $N(X)$ as stated in \TAB{sub}.
We consider the following inner product space 
\[
V_{\k}(X):=(~\set{ c\in N(X) }{ \k\cdot c=0 }\otimes_\Z\R,~ \cdot~).
\]
The following proposition summarizes some known facts from the literature.

\begin{proposition}
\label{prp:sub}
Let $X$ and $X'$ be weak del Pezzo surfaces \st $1\leq \k^2=\k^{'2}\leq 8$,
$\h=-\alpha\k$ and $\h'=-\alpha\k'$ for some fixed $\alpha\in\Q_{>0}$.
\begin{Mlist}
\mclaim{a} $R(X)$ forms a root system in the inner product space $V_{\k}(X)$
\st 
\\[2mm]
\begin{tabular}{r|cccccccc}
$\k^2$ &
$8$ &
$7$ &
$6$ &
$5$ &
$4$ &
$3$ &
$2$ &
$1$ 
\\
Dynkin diagram of $R(X)$ &
$A_1$     &        
$A_1$     &       
$A_1+A_2$ &  
$A_4$     &  
$D_5$     &  
$E_6$     &
$E_7$     &
$E_8$     
\end{tabular}

\mclaim{b}
Subsets $S(X)$ and $T(X)$ form root subsystems of $R(X)$
with root bases $A(X)$ and $B(X)$, \resp.

\mclaim{c} Suppose that $\sigma_*$ is the identity. 
One has $\BN(X)\cong \BN(X')$ \Iff $T(X)\cong T(X')$ as root subsystems.

\mclaim{d} 
Suppose that $B(X)$ is the emptyset.
One has $\BN(X)\cong \BN(X')$ \Iff $S(X)\cong S(X')$ as root subsystems.

\end{Mlist}
\end{proposition}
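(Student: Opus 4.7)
The proof is largely a compilation of classical facts: parts (a) and (b) follow from the structure theory of anticanonical models of weak del Pezzo surfaces, part (c) is due to Du Val \citep{val1}, and part (d) is due to Wall \citep[Corollary~2.1]{wal1}. For (a), the plan is to verify the four root system axioms of \SEC{root} directly inside $V_\Mk(X)$, using that the intersection form on $V_\Mk(X)$ is negative definite by the Hodge index theorem together with $(-\Mk)^2=\Mk^2>0$. For $r,s\in R(X)$ the reflection $s_r(s)=s+(s\cdot r)r$ lies in $N(X)$ and satisfies $\Mk\cdot s_r(s)=0$ and $s_r(s)^2=-2$, while $2(r\cdot s)/(s\cdot s)=-(r\cdot s)\in\MbbZ$ is immediate. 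Spanning and the Dynkin diagram identification are handled simultaneously by working in the type 1 basis with $r=9-\Mk^2$ and $\Mk=-3\Me_0+\Me_1+\ldots+\Me_r$, and exhibiting simple roots $\alpha_0=\Me_0-\Me_1-\Me_2-\Me_3$ and $\alpha_i=\Me_i-\Me_{i+1}$ for $1\leq i\leq r-1$; their pairwise intersections reproduce the diagrams tabulated in (a).

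For (b), the key step for $T(X)$ is the observation that a $(-2)$-class $c$ is effective if and only if $c$ is a non-negative integer combination of $(-2)$-curves: if $c=\sum n_iC_i$ is a decomposition into irreducibles with $n_i\geq0$, then $-\Mk\cdot C_i\geq 0$ (since $-\Mk$ is nef) with vanishing sum, forcing each $C_i$ to be a $(-2)$-curve. Consequently $T(X)$ is precisely the root subsystem of $R(X)$ generated by $B(X)$, and $B(X)$ is its root base by construction. For $S(X)$, the map $\sigma_*$ is a lattice isometry fixing $\Mk$ and thus permutes $R(X)$; the set $S(X)$ is stable under $c\mapsto -c$ by linearity, and the computation $\sigma_*(s+(s\cdot r)r)=-s-(s\cdot r)r$ for $r,s\in S(X)$ shows closure under the reflection $s_r$. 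Hence $S(X)$ is a root subsystem of $R(X)$ in its own linear span, and admits a root base $A(X)$ by the general theory of root systems.

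For (c) and (d), the forward implications are immediate since any lattice isomorphism $N(X)\cong N(X')$ respects $\Mk$, $\Mh$, $\sigma_*$, and hence $R$, $T$, $B$, $S$, $A$. For the converse the plan has three steps: first, $\Mk^2=\Mk^{'2}$ combined with (a) identifies $R(X)\cong R(X')$; second, the given isomorphism of root subsystems lifts to an automorphism of the ambient root system carrying $T(X)$ to $T(X')$ in (c), or $S(X)$ to $S(X')$ in (d); third, this extends to a lattice isometry sending $\Mk$ to $\Mk'$, and hence $\Mh$ to $\Mh'$ via $\Mh=-\alpha\Mk$. The function $h^0$ is then preserved because on a weak del Pezzo surface $h^0$ of an arbitrary class is determined by Riemann-Roch together with the decomposition into indecomposable effective components, and the latter are encoded by $B(X)$ together with $E(X)$, both intrinsic to the lattice data.

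The hard part is the second step in (c) and (d): verifying that an isomorphism of root subsystems lifts to an automorphism of the ambient $R(X)$ compatible with $\sigma_*$ and the distinguished classes. For (c) (trivial $\sigma_*$) this amounts to showing that Weyl-group orbits of root subsystems of $R(X)$ are in bijection with their abstract isomorphism types, which is exactly the content of Du Val's classification \citep{val1} as reworked in \citep[Sections~8.7.1~and~8.8.1]{dol1}. For (d) (where $B(X)=\emptyset$, so $X$ is a genuine del Pezzo surface and $\sigma_*$ may act nontrivially) the corresponding statement is Wall's classification of real del Pezzo surfaces \citep[Corollary~2.1]{wal1}. The proof would invoke these references rather than reprove them from scratch.
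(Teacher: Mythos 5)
Your proposal is correct and follows essentially the same route as the paper: parts (a) and (b) by direct verification of the root-system axioms plus standard facts about $(-2)$-curves (your reflection-closure check for $S(X)$ is a minor variant of the paper's eigenspace argument for $\sigma_*$), and the nontrivial converses in (c) and (d) deferred to the Du Val/Dolgachev and Wall classifications, exactly as the paper does via \citep[Corollary~8.2.33]{dol1} and \citep[Theorem~2.1]{wal1}. The only caveats are cosmetic: your explicit simple roots presuppose $\Mk^2\leq 6$ (the degrees $7,8$ cases are degenerate), and the assertion that a root-(sub)system isomorphism extends to a lattice isometry fixing $\Mk$ is precisely the point the cited classifications are needed for, so it should be attributed to them rather than stated as automatic.
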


\begin{proof}
Assertions \refmclaim{a} and \refmclaim{b} follow from
\citep[Proposition~8.2.10 and Proposition~8.2.25]{dol1}
and \citep[Section~2]{wal1} (see also \RMK{A}).
Notice that if $\sigma_*$ is the identity or if $B(X)=\emptyset$, then we can consider $X$ as a complex weak del Pezzo surface
and a (non-weak) del Pezzo surface, \resp.
Assertion \refmclaim{c} is now a direct consequence of \citep[Corollary~8.2.33]{dol1}. 
Assertion \refmclaim{d} is a direct consequence of \citep[Theorem~2.1]{wal1}.
\end{proof}

\begin{remark}
\label{rmk:A}
The matrix defining $\sigma_*\c N(X)\to N(X)$ has eigenvalues $\pm 1$, since it is an involution.
Let $V^\pm$ denote the eigenspace of $\sigma_*$ for eigenvalues $\pm1$, \resp.
Notice that $V^-\subset V_\k(X)$, since
$\sigma_*(\k)=\k$ and $\sigma_*(v)\cdot \sigma_*(\k)=-v\cdot k$ for all $v\in V^-$.
The intersection of a root system with a subspace is a root subsystem
and $V^-\cap R(X)$ generates $V^-$.
Thus $S(X)$ is a root subsystem of $R(X)$ and $A(X)$ is its root base by definition. 
\END
\end{remark}

\begin{lemma}
\label{lem:h0}
Let $X$ be a weak del Pezzo surface.
The function $h^0\c N(X)\to \Z_{\geq0}$ is uniquely determined by $B(X)$.
Thus the Neron-Severi lattice $\BN(X)$ is uniquely determined by $B(X)$ together with data 1, 2 and 3 in \DEF{nsl}.
\end{lemma}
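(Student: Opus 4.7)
The plan is to describe $h^0\colon N(X)\to\MbbZ_{\geq 0}$ algorithmically in terms of items 1--3 of \SEC{ns} together with $B(X)$. The backbone is Riemann--Roch on the smooth model $Y$, namely
\[
\chi(c) \;=\; 1 + \tfrac{1}{2}\, c \cdot (c - \Mk),
\]
which depends only on the lattice data, together with Serre duality $h^2(c) = h^0(\Mk - c)$. Hence it suffices to recover $h^0$, since $h^1$ is then forced by the Euler characteristic identity.

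First I would establish a vanishing statement: because $-\Mk$ is nef and big, for any nef class $c \in N(X)$ the class $c - \Mk$ is again nef and big, so Kawamata--Viehweg gives $h^i(c)=0$ for $i>0$ and therefore $h^0(c)=\chi(c)$ in the nef range. Next, for an arbitrary class $c \in N(X)$ I would iterate the following $(-2)$-reduction: while there exists $b\in B(X)$ with $b\cdot c<0$, the irreducible $(-2)$-curve represented by $b$ must be a fixed component of every effective representative of $c$ (since $b\cdot D<0$ forces $b\subset \mathrm{Supp}(D)$), so $h^0(c)=h^0(c-b)$; replace $c$ by $c-b$ and repeat. By \PRP{sub}.b the set $B(X)$ is the root base of the root subsystem $T(X)\subset R(X)$, so this is the standard Weyl-chamber reduction in the negative-definite root lattice $V_{\Mk}(X)$ and terminates in finitely many steps at a class $c_0$ with $c_0\cdot b\geq 0$ for every $b\in B(X)$.

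It then remains to show that $c_0$ in the fundamental chamber of $B(X)$ is in fact nef on $X$, reducing to the vanishing step. Since the effective cone of a weak del Pezzo surface is generated by its $(-1)$-curves and $(-2)$-curves, and the irreducible $(-1)$-curves are exactly those $e\in E(X)$ which satisfy $e\cdot b\geq 0$ for all $b\in B(X)$ (a condition readable from the lattice and $B(X)$), the lattice relations between $E(X)$ and $B(X)$ force $c_0\cdot e\geq 0$ for each such $e$. Thus $c_0$ is nef, $h^0(c_0)=\max\{0,\chi(c_0)\}$, and $h^0(c)=h^0(c_0)$. This shows that $h^0$ is determined by items 1--3 of \SEC{ns} and by $B(X)$; the second assertion of the lemma is then immediate, because by \SEC{ns} the datum $N(X)$ consists exactly of items 1--3 plus the function $h^0$.

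The main obstacle is the last step --- verifying, purely from $B(X)$ and the lattice pairing, that $c_0$ in the fundamental chamber is nef on $X$, which amounts to describing the effective cone of the weak del Pezzo surface intrinsically in the lattice. This uses the anticanonical morphism $X\to \bar X$ contracting the $(-2)$-curves to rational double points, and requires careful treatment of boundary cases (small canonical degree $\Mk^2$, classes with $c_0^2=-1$, or $\chi(c_0)\leq 0$); once these are handled, the lattice-plus-$B(X)$ description of $h^0$ is complete.
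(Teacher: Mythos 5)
Your reduction step is fine as far as it goes, but the final step contains a genuine error, not just an obstacle to be smoothed over: it is simply false that a class $c_0$ with $c_0\cdot b\geq 0$ for all $b\in B(X)$ is nef, and the claim that ``the lattice relations between $E(X)$ and $B(X)$ force $c_0\cdot e\geq 0$'' for every irreducible $(-1)$-curve class $e$ has easy counterexamples. Take $X$ a del Pezzo surface with $B(X)=\emptyset$ and $c_0=2\Me_1$: the chamber condition is vacuous, yet $c_0\cdot \Me_1=-2<0$, so $c_0$ is not nef; moreover $h^0(2\Me_1)=1$ while $\chi(2\Me_1)=1+\tfrac12\bigl((2\Me_1)^2-\Mk\cdot 2\Me_1\bigr)=0$, so your closing formula $h^0(c_0)=\max\{0,\chi(c_0)\}$ fails on chamber classes. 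The point you are missing is that the fixed part of a non-nef class contains not only $(-2)$-curves but also $(-1)$-curves, so a reduction that strips only elements of $B(X)$ cannot land in the nef cone and the Kawamata--Viehweg/Riemann--Roch computation does not apply at the end of your loop.

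The paper closes exactly this hole by a different organization of the argument: it first shows (via Riemann--Roch, adjunction and the Hodge index theorem, using nefness of $-\Mk$) that the classes $b$ with $h^0(b)=1$ are precisely the nonnegative integral combinations of $E(X)\cup B(X)$, and then writes any non-nef class as $c=q+\Sigma_j b_j$ with $q$ nef and each $b_j\in E(X)\cup B(X)$, recovering $q$ from $c$ because $E(X)$ is determined by the lattice data and $B(X)$ is given; finally $h^i(c)=h^i(q)$ and the nef case is handled as in your first paragraph. So to repair your proof you must enlarge the reduction set from $B(X)$ to $E(X)\cup B(X)$ (equivalently, also strip the $(-1)$-classes pairing negatively with $c$), and justify that this stripping exhausts the fixed part --- which is essentially the characterization of rigid effective classes that the paper proves; without that, the ``main obstacle'' you flag is not a boundary-case issue but the core of the lemma.
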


\begin{proof}
Recall \NTN{RR},
see \TAB{sub} for $E(X),B(X)\subset N(X)$ and
let $\k$ denote the canonical class of $X$ \st $-\k$ is nef and big by \DEF{wdp}.
By \PRP{coord} we may assume that 
$\md{\e_0,\e_1,\ldots,\e_r}_\Z$ is a type 1 basis for $N(X)$ 
\st $r=9-\k^2$, $\k=-3\e_0+\e_1+\ldots+\e_r$
and the elements of $E(X)$ are characterized by \TAB{E}.
We assume that we have explicit coordinates for the elements in $B(X)$
in terms of this type 1 basis and
we recover $E_\star(X)$ from $E(X)$ and $B(X)$
by applying \LEM{div}.

{\it Claim 1:} 
If $c\in N(X)$ \st
$h^0(c)>0$ and $c\cdot q<0$
for some indecomposable class $q$,
then $q\in E_\star(X)\cup B(X)$ and $h^0(c)=h^0(c-q)$.
\\
We know from (RR) and (SD) that 
$h^0(q)=1\geq \frac{1}{2}(q^2-\k\cdot q)+1$ so that $-q^2+\k\cdot q\geq 0$.
Therefore, by (AF) and $-\k$ being nef, we find that $0\leq -\k\cdot q\leq 1$ and $q^2\leq 0$.
If $-\k\cdot q=0$, then $q^2<0$ by (HI) and thus $q^2=-2$ by (AF).
If $-\k\cdot q=1$, then $q^2=-1$ by (AF).
It follows from \LEM{div} that $q\in E_\star(X)\cup B(X)$.
Since $q$ is the class of a fixed component of the linear series of $c$
we find that $h^0(c)=h^0(c-q)$.
Hence, we concluded the proof of claim~1.

For given $c\in\md{\e_0,\e_1,\ldots,\e_r}_\Z$
we would like to recover $h^0(c)$ from $E_\star(X)\cup B(X)$.
If $c\cdot q<0$ for some $q\in E_\star(X)\cup B(X)$,
then we apply claim~1 and set $c:=c-q$.
We repeat this step 
until either $c\cdot q\geq 0$ for all $q\in E_\star(X)\cup B(X)$
or $c\cdot e_0<0$.
If $c\cdot e_0<0$, then $h^0(c)=0$.
Otherwise $h^0(c)=\frac{1}{2}(c^2-\k\cdot c)+1$ by (RR) and (KV).
\end{proof}

The pair of Dynkin diagrams $\FD(A(X))$ and $\FD(B(X))$
is not fine enough as an invariant for the Neron-Severi lattice $\BN(X)$
of a weak del Pezzo surface $X$,
since non-isomorphic Neron-Severi lattices might have the same such pair.  
For this purpose we introduce  
a more fine-grained and computable version of the Dynkin diagram.
The \df{Cremona invariant} is defined as
\[
\FC(X):=\FD\bigl(E(X)\cup B(X)\bigr)
\,\bigcup\,
\set{(c,\sigma_*(c),\infty)}{ c\in E(X)\cup B(X)}.
\]
Thus we enhance the incidence diagram of $E(X)\cup B(X)$ with edges (labeled $\infty$) between conjugate classes.
Notice that $\sigma_*(B(X))=B(X)$.
We consider Cremona invariants isomorphic if they are isomorphic as labeled graphs.

\begin{proposition}
\label{prp:cremona}
Suppose that $X$ and $X'$ are weak del Pezzo surfaces
\st $1\leq \k^2,\k'^2\leq 7$, $\h=-\alpha\k$ and $\h'=-\alpha\k'$ for some fixed $\alpha\in\Q_{>0}$.
One has $\BN(X)\cong \BN(X')$
\Iff 
$\FC(X)\cong \FC(X')$.
\end{proposition}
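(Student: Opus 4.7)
The plan is to prove both implications. The forward direction is essentially formal: any lattice isomorphism $\lambda\colon N(X)\to N(X')$ sends $\Mk\mapsto \Mk'$ and preserves intersection products as well as the action of the real structure, so it must restrict to a bijection $E(X)\cup B(X)\to E(X')\cup B(X')$ that preserves both the incidence edges and the $\infty$-labelled conjugation edges; this is precisely an isomorphism of Cremona invariants.

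For the converse, given an isomorphism $\varphi\colon E(X)\cup B(X)\to E(X')\cup B(X')$ of labelled graphs, I would build a $\MbbZ$-linear isomorphism $\lambda\colon N(X)\to N(X')$ and then verify that it transports $\Mk,\Mh,\sigma_*$ and $h^0$. The first and most critical step is to show that $E(X)\cup B(X)$ generates $N(X)$ as a $\MbbZ$-module in the range $1\leq \Mk^2\leq 7$. In the type 1 basis $\Me_0,\Me_1,\ldots,\Me_r$ with $r=9-\Mk^2\geq 2$, each $\Me_i$ and each $\Me_0-\Me_i-\Me_j$ is numerically a $(-1)$-class and hence lies in $E(X)$; summing $\Me_1+\Me_2+(\Me_0-\Me_1-\Me_2)=\Me_0$ recovers the remaining basis element, so the whole type 1 basis lies in the $\MbbZ$-span of $E(X)$. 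This is exactly the reason $\Mk^2=8$ has to be excluded: on $\MbbF_2$ one has $E(X)=\emptyset$ and $B(X)$ of rank one, which cannot generate the rank-$2$ lattice.

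Once generation is in hand, $\varphi$ extends by $\MbbZ$-linearity to a candidate $\lambda$, and well-definedness follows from non-degeneracy of the unimodular intersection form: any relation $\sum n_i s_i=0$ in $N(X)$ yields $\varphi(t)\cdot\sum n_i\varphi(s_i)=\sum n_i(t\cdot s_i)=0$ for every generator $t$, and since $\varphi(E(X)\cup B(X))$ also spans $N(X')$, the image relation holds. By construction $\lambda$ is a lattice isomorphism. To verify $\lambda(\Mk)=\Mk'$, I use that $\Mk$ is the unique element of $N(X)_\MbbQ$ with $\Mk\cdot e=-1$ for every $e\in E(X)$, existence being clear and uniqueness following from the spanning property and non-degeneracy; then $\lambda(\Mh)=-\alpha\lambda(\Mk)=\Mh'$ comes for free. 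The $\infty$-labelled edges encode $\sigma_*$ on the generating set, so $\lambda\circ\sigma_*=\sigma'_*\circ\lambda$ holds on generators and therefore on all of $N(X)$, and $\varphi(B(X))=B(X')$ is built into the isomorphism of Cremona invariants, so by \LEM{h0} the functions $h^0$ correspond under $\lambda$. The main obstacle I anticipate is the generation claim, which must hold for every weak del Pezzo in the stated range including those with intricate $(-2)$-curve configurations; however, using that $E(X)$ is defined purely numerically (so $\Me_i$ and $\Me_0-\Me_i-\Me_j$ lie in it regardless of effectivity) together with \PRP{sub} reduces the matter to the elementary type 1 basis computation above, after which all remaining verifications are bookkeeping.
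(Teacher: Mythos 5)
Your proposal is correct and follows essentially the same route as the paper's proof: the forward direction is formal, and the converse rests on the observation that $E(X)$ contains the numerical $(-1)$-classes $\Me_i$ and $\Me_0-\Me_i-\Me_j$ (so $E(X)\cup B(X)$ spans $N(X)$ exactly when $1\leq\Mk^2\leq 7$), after which $\Mk$ is pinned down by its intersections with $E(X)$, $\sigma_*$ is read off the $\infty$-labelled edges, and $h^0$ is recovered from $B(X)$ via \LEM{h0}. Your explicit well-definedness check of the linear extension via non-degeneracy is a slightly more careful rendering of the paper's step of choosing a basis inside $E(X)$ and appealing to transitivity of automorphisms on bases; otherwise the two arguments coincide.
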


\begin{proof}
The $\Rightarrow$ direction is immediate.
In the remainder of this proof we consider the $\Leftarrow$ direction. 
It is a straightforward consequence of \PRP{coord}[a,c] that 
$N(X)\cong \md{\e_0-\e_1-\e_2, \e_1,\ldots, \e_r}_\Z$, where the generators are a subset of~$E(X)$.
The graph isomorphism restricted to these generators,
uniquely induces an isomorphism $\lambda\c N(X)\to N(X')$ of $\Z$-modules.
It is left to verify that this isomorphism is compatible with 
the remaining data of $\BN(X)$ as it is stated in \DEF{nsl}.
The edges labeled $\infty$ uniquely determines the involution $\sigma_*$
on the generators \st $\lambda\circ\sigma_*=\sigma_*\circ \lambda$.
The canonical class $\k$ of $X$ is the unique element in $N(X)$
\st $\k^2=10-\rnk(N(X))$, $\sigma_*(\k)=\k$ and $\k\cdot e=-1$ for all $e\in E(X)$.
It follows that $\lambda(\k)=\k'$.
It follows from \LEM{h0} and $\lambda(B(X))=B(X')$ that $h^0=h^0\circ\lambda$.
Thus we constructed an isomorphism of Neron-Severi lattices and concluded the proof.
\end{proof}

\section{A classification of weak del Pezzo surfaces}

In this section we prove \THM{nsl} via an algorithm that outputs \TAB{nsl}.
The algorithm is designed so that it is easy to implement in a computer algebra system,
while still terminating in less than two days on a single pentium processor.
The computation of \TAB{nsl} for $d\geq 3$ takes only a couple of minutes.
See \citep[{\tt ns\_lattice}]{ns_lattice} for an implementation.

The \df{set of positive roots} $R^+$ is defined as the set of classes $c\in \md{\e_0,\ldots,\e_8}_\Z$ 
that are listed up to permutation of $(\e_i)_i$ in \TAB{B} 
\st either $c=\e_i-\e_j$ for some $1\leq i<j\leq 8$ or $c\cdot \e_0>0$.

\begin{lemma}
\label{lem:R}
Suppose that $X\subset\P^n$ is a weak del Pezzo surface \st $1\leq\k^2\leq 7$. 
There exists a type 1 basis for $N(X)$ \st $A(X)\subset R^+$.
Moreover, for all unimodular involutions $\sigma_*\c N(X)\to N(X)$, 
\st $A(X)\subset R^+$ is a root base for $S(X)$,
we may assume \Wlog that $B(X)\subset R^+$.
\end{lemma}

\begin{proof}
Let $Y\to X$ be the smooth model and
let $X_N$ be a linear normalization.
By \PRP{coord}a, there exists a sequential blowup $\pi\c Y\to \P^2$
with center $(p_1,\ldots, p_r)$
so that $N(X)$ admits a type 1 basis $\md{\e_0,\ldots,e_r}_\Z$.
We follow the notation of \DEF{type} and let $C_1\subset Y_1=\P^2$
be a curve.
The Zariski closure $C_2\subset Y_2$ of $\pi_1^{-1}(C_1\setminus\{p_1\})$
is called the \df{strict transform} of $C_1$.
The strict transform of $C_1$ via $\pi_1\circ\pi_2$
is defined as the strict transform of~$C_2$ and so on.
Recall from \RMK{A} that the root base~$A(X)$ 
forms a basis of the eigenspace of~$-1$ for the involution~$\sigma_*$.
Since positive roots in~$A(X)$ are send by $\sigma_*$ to negative roots,
we may assume \Wlog that $A(X)\subset R^+$.
Any other choice for~$\sigma_*$, \st $A(X)\subset R^+$ is a root base for $S(X)$,
is equivalent to a choice for a type 1 basis for $N(X)$.
If $X_N\cong Y$, then $B(X)=\emptyset$ by \PRP{class}a and 
the complex blowup centers $p_1,\ldots,p_r$ are general.
Now suppose that $X_N\ncong Y$.
In this case $p_1,\ldots,p_r$ lie in ``almost'' general position 
in the following sense.
If $\e_0-\e_1-\e_2-\e_3\in B(X)$, then $p_1$, $p_2$ and $p_3$ lie on 
strict transforms of a complex line.
If $2\e_0-\e_1-\e_2-\e_3-\e_4-\e_5-\e_6\in B(X)$, then $p_1,\ldots p_6$ lie on 
strict transforms of a complex conic.
If $3\e_0-2\e_1-\e_2-\e_3-\e_4-\e_5-\e_6-\e_7-\e_8\in B(X)$, then $p_1,\ldots p_8$ lie on a 
strict transforms of a complex cubic that has a double point at $p_1$.
If $\e_1-\e_2 \in B(X)$, then $p_2$ is infinitely near to $p_1$.
It follows from \PRP{coord}c that these are, up to permutation of 
$p_1,\ldots,p_r$ all possible geometric conditions.
It is now straightforward to see that we can always choose an index on the points \st 
all the conditions are expressed by elements in $R^+$ and \st $\sigma_*(B(X))=B(X)$.
Therefore, $B(X)\subset R^+$ \Wlog as was left to be shown. 
\end{proof}

\begin{notation}
\label{ntn:C}
Suppose that $X$ is a weak del Pezzo surface with canonical class~$\k$.
In the following five algorithms we will represent 
$N(X)$ as a type~1 basis $\md{\e_0,\ldots,\e_r}_\Z$ with $r=9-\k^2$
and $R^+$ is short notation for~$R^+\cap N(X)$. 
Suppose that $B\subset R^+$ represents a root base $B(X)$ and 
that matrix $M \in \Z^{(r+1)\times(r+1)}$ represents $\sigma_*\c N(X)\to N(X)$.
We denote by $\FC(B,M)$ the Cremona invariant $\FC(X)$.
Notice that we know $E(X)\subset \md{\e_0,\ldots,\e_r}_\Z$ from \TAB{E}
and thus we can uniquely recover $\FC(X)$ from $B$ and $M$. 
If $M$ is the identity matrix, then we denote $\FC(X)$ by $\FC(B)$. 
\END
\end{notation}

\begin{algorithm}
({\tt is\_root\_base})
\label{alg:basis}
\begin{itemize}[itemsep=1pt,topsep=0pt, leftmargin=0mm]

\item \textbf{Input.} A subset $B\subset R^+$ \wrt a type 1 basis $\md{\e_0,\ldots,\e_r}_\Z$.

\item \textbf{Output.} True if $B$ is a root base of a root subsystem and False otherwise.

\item \textbf{Method.} 
Return True if $B$ is a linear independent set in $\md{\e_0,\ldots,\e_r}_\R$,
if the edges of $\FD(B)$ that are not self-loops are labeled $1$
and if all roots in $\md{B}_\R\cap R^+$ are a positive linear combination of elements in $B$.
\END
\end{itemize}
\end{algorithm}

\begin{algorithm}
({\tt seek\_bases})
\label{alg:seek}
\begin{itemize}[itemsep=1pt,topsep=0pt, leftmargin=0mm]

\item \textbf{Input.} An integer $1\leq r\leq 8$ and a subset $S\subset R^+$.

\item \textbf{Output.} 
A set $\Psi\subset \set{B\subset S}{ B \text{ is a root base and } |B|\leq r}$ \st
$\emptyset\in \Psi$
and for all root bases $B'\subset S$ there exists $B\in \Psi$ \st $\FC(B)\cong \FC(B')$.
Moreover, $\FC(B)\ncong \FC(B'')$ for all distinct $B,B''\in \Psi$.
The elements of the sets in $\Psi$ are represented \wrt a type 1 basis $\md{\e_0,\ldots \e_r}_\Z$.

\item \textbf{Method.} 
Let $\CT$ be the set of Dynkin diagrams 
with at most $r$ vertices and
with components of types $A_\ell$, $D_\ell$ and/or $E_\ell$
(including the empty graph $A_0$). 
Set $\Psi:=\{\}$. We do the following procedure for all $D\in \CT$ where $D$
has vertices $\{v_1,\ldots, v_n\}$.
We loop through all $b_1\in S$ for vertex $v_1$.
For each $b_1\in S$ we loop through all $b_2\in S$ such that $b_1\cdot b_2=1$
if $(v_1,v_2)$ is an edge of $D$ and $b_1\cdot b_2=0$ otherwise.
We recursively continue this procedure until finally $B=\{b_1,b_2,\ldots,b_n\}$
\st $\FD(B)\cong D$ as graphs (if such $B$ exists). We add $B$ to $\Psi$ if
$\FC(B)\neq \FC(B')$ for all $B'\in \Psi$. We return $\Psi$ after all possible $D\in \CT$ and $B\subset S$ are exhausted
with this method. 
\END
\end{itemize}
\end{algorithm}

\begin{algorithm}
({\tt get\_bas})
\label{alg:bas}
\begin{itemize}[itemsep=1pt,topsep=0pt, leftmargin=0mm]

\item \textbf{Input.} An integer $1\leq r\leq 8$. 

\item \textbf{Output.} 
A set $\Psi\subset \set{B\subset R^+}{ B \text{ is a root base and } |B|\leq r}$ \st
$\emptyset\in\Psi$ and
for all root bases $B'\subset R^+$ with $|B'|\leq r$ there exists $B\in \Psi$ \st $\FC(B)\cong \FC(B')$.
Moreover, $\FC(B)\ncong \FC(B'')$ for all distinct $B,B''\in \Psi$.
We represent all elements of sets in $\Psi$ \wrt a type 1 basis $\md{\e_0,\ldots \e_r}_\Z$.

\newpage
\item {\bf Method.} 
\begin{enumerate}[itemsep=1pt,topsep=0pt, leftmargin=0mm]
\item[]
{\bf if} $r=1$ {\bf then return} $\{\emptyset\}$

\item[] $\Psi:=\{\}$
\item[] $\Psi_1:=${\tt get\_bas}$(r-1)$
\item[] $\Psi_2:=$\texttt{seek\_bases}$(r,\set{ c\in R^+}{ c\cdot \e_r\neq 0 })$

\item[]
{\bf for} $(B_1,B_2)\in \Psi_1\times \Psi_2$ {\bf do}
\begin{enumerate}[itemsep=1pt,topsep=0pt, leftmargin=8mm]
\item[]
{\bf if} {\tt is\_root\_base}($B_1\cup B_2$) {\bf and} $\FC(B_1\cup B_2)\ncong \FC(B')$ for all $B'\in \Psi$ 
{\bf then} 
\begin{enumerate}[itemsep=1pt,topsep=0pt, leftmargin=8mm]
$\Psi:=\Psi\cup \{B_1\cup B_2\}$
\end{enumerate}
\end{enumerate}

\item[] {\bf return} $\Psi$
\END
\end{enumerate}

\end{itemize}
\end{algorithm}

\begin{algorithm}
({\tt get\_inv})
\label{alg:inv}
\begin{itemize}[itemsep=1pt,topsep=0pt, leftmargin=0mm]

\item \textbf{Input.} An integer $1\leq r\leq 8$. 

\item \textbf{Output.} 
A set $\Psi$ of pairs $(A,M)$, where $A\subset R^+$ is a root base
and $M \in \Z^{(r+1)\times(r+1)}$ is 
an involutory matrix such that the elements in $A$ form a basis for the 
eigenspace of~$M$ for eigenvalue $-1$. 
For each such pair $(A',M')$ there exists $(A,M)\in \Psi$ \st $\FC(\emptyset,M)\cong \FC(\emptyset,M')$.
Moreover, for each distinct $(A,M), (A'',M'')\in \Psi$ 
we ensured that $\FC(\emptyset,M)\ncong \FC(\emptyset,M'')$. Both $A$ and $M$ are represented \wrt 
a type 1 basis $\md{\e_0,\ldots,\e_r}_\Z$.

\item \textbf{Method.} 
We set $\Psi:=\{\}$, $I$ is the identity matrix, $\k:=-3\e_0+\e_1+\ldots+\e_r$
and $J$ is the matrix corresponding to the inner product for the type 1 basis.

\item[] {\bf for} $A\in$ {\tt get\_bas($r$)} {\bf do}
\begin{enumerate}[itemsep=1pt,topsep=0pt, leftmargin=8mm]
\item We represent the elements in $A$ as rows of a matrix.
The columns of matrix $K$ form a basis for the kernel of $A\cdot J$.
Let $Q:=(A^\top|K)$ be the matrix whose columns are the union of the columns of $A^\top$ and $K$.
\item Set
$M:=Q\cdot D\cdot Q^{-1}$,
where $D$ is the diagonal matrix with the first $|A|$ entries~$-1$ and the remaining entries~$1$.
\item \textbf{if} 
$M\in \Z^{(r+1)\times(r+1)}$ 
\textbf{and} 
$M^\top\cdot J\cdot M=J$
\textbf{and} 
$M\cdot M=I$
\textbf{and} 
$M(\k)=\k$
\textbf{and}
$\FC(\emptyset,M)\ncong\FC(\emptyset,M')$ for all $(A',M')\in\Psi$
\textbf{then} $\Psi:=\Psi\cup\{ (A, M) \}$

\end{enumerate}
\item[] {\bf return} $\Psi$
\END
\end{itemize}
\end{algorithm}

\newpage
\begin{algorithm}
({\tt get\_cls})
\label{alg:cls}
\begin{itemize}[itemsep=1pt,topsep=0pt, leftmargin=0mm]
\item {\bf Input.} 
An integer $1\leq r\leq 8$.

\item {\bf Output.}
The set
\\
$\Psi:=\{[\BN(X)]~|~X \text{ is a weak del Pezzo surface},~\text{rank}(N(X))=r+1,~\h=-\k\}$,
\\
where 
the equivalence class $[\BN(X)]$ is represented by a triple $(B,A,M)$
where $B,A\subset R^+$ represent $B(X),A(X)\subset R(X)$
\wrt a type 1 basis $\md{\e_0,\ldots,\e_r}_\Z$
and $M\in \Z^{(r+1)\times(r+1)}$ represents $\sigma_*\c N(X)\to N(X)$. 

\item {\bf Method.} Let $I$ denote the $(r+1)\times(r+1)$ identity matrix.
\begin{enumerate}[itemsep=1pt,topsep=0pt, leftmargin=0mm]
\item[] {\bf if} $r=1$ {\bf then return} $\{(\emptyset,\emptyset, I)\}$ 
\item[] $\Psi:=\{\}$
\item[] {\bf for} $(A,M)\in$ {\tt get\_inv}$(r)$ {\bf do}
\begin{enumerate}[itemsep=1pt,topsep=0pt, leftmargin=8mm]
\item[] $\Psi_1:=\set{ B }{(B,A,M')\in \text{\tt get\_cls}(r-1) \text{ for some } B \text{ and } M' }$ 
\item[] $\xi:=\set{ c\in R^+ }{ M(c)=c }$
\item[] Construct $\Phi$ \st $\Phi\,\dot\cup\,\set{M(c)}{c\in\Phi}= R^+\setminus\xi$.
\item[] {\bf if}~~$\Psi_1=\{\}$~~{\bf then}
\begin{enumerate}[itemsep=1pt,topsep=0pt, leftmargin=8mm]
\item[] $\Psi_1:=\{\emptyset\}$
\end{enumerate}
\item[] {\bf else}
\begin{enumerate}[itemsep=1pt,topsep=0pt, leftmargin=8mm]
\item[] $\xi:=\set{c\in\xi}{ c\cdot\e_r\neq 0}$
\item[] $\Phi:=\set{c\in\Phi}{ c\cdot\e_r\neq 0}$
\end{enumerate}
\item[] $\Psi_2:=$ {\tt seek\_bases}$(r,\xi)$ 
\item[] $\Psi_3:=$ {\tt seek\_bases}$(r,\Phi)$ 
\item[] {\bf for} $(B_1,B_2,B_3)\in \Psi_1\times \Psi_2\times \Psi_3$ {\bf do}
\begin{enumerate}[itemsep=1pt,topsep=0pt, leftmargin=8mm]
\item[] $B:=B_1\cup B_2\cup B_3\cup \set{M(c)}{c\in B_3}$
\item[] {\bf if} {\tt is\_root\_base}($B$) {\bf and} $\FC(B,M)\ncong \FC(B',M)$ for all $(B',A,M)\in \Psi$ 
{\bf then} $\Psi:=\Psi\cup \{(B,A,M)\}$
\end{enumerate}
\end{enumerate}
\item[] {\bf return} $\Psi$
\END
\end{enumerate}
\end{itemize}
\end{algorithm}

\begin{remark}
The Dynkin diagrams of the root bases in the
outputs of \ALG{bas} and \ALG{inv} for input $1\leq r\leq 8$,
are stated in \citep[Corollary~2.1]{wal1} and \citep[Chapter~8]{dol1}.
The Dynkin diagrams of the root bases in the output of \ALG{bas} can alternatively be obtained using the procedure of
Borel-Dynkin-Siebenthal (see \citep[Section~8.2.3 and Section~8.7.2]{dol1} for further references). 
\END
\end{remark}

\begin{example}
\label{exm:deg6}
We consider the output of \ALG{cls} with input $r=3$.
The corresponding row numbers are 3 until 14 in \TAB{nsl}.
In \TAB{crem} we depict the Cremona invariant $\FC(X)$ for each $[\BN(X)]$ in the output 
of \ALG{cls}. 

Recall from \PRP{coord} and \NTN{C} that 
\begin{Mlist}
\item $N(X)\cong\md{\e_0,\e_1,\e_2,\e_3}_\Z$ with $-\k=\h=3\e_0-\e_1-\e_2-\e_3$,
\item $E(X)=\{\e_1,~\e_2,~\e_3,~\e_0-\e_1-\e_2,~\e_0-\e_1-\e_3,~\e_0-\e_2-\e_3\}$,
\item $F(X)=\{\e_0-\e_1,~\e_0-\e_2,~\e_0-\e_3\}$,
and 
\item $R^+=\{\e_1-\e_2,~ \e_2-\e_3,~ \e_1-\e_3,~ \e_0-\e_1-\e_2-\e_3\}$.
\end{Mlist}
We know from \LEM{G} and \LEM{R} that $G(X)=F_\R(X)$ and $A(X),B(X)\subset R^+$.
Notice that $B(X)$ can be recovered from the square vertices in \TAB{crem}.

\begin{table}[!ht]
\caption{
The twelve Cremona invariants of Neron-Severi lattices of weak del Pezzo surfaces of degree 6
(self-loops of vertices are omitted).
}
\label{tab:crem}
\centering
\def\xa{-0.5}\def\ya{+1.0}
\def\xb{+0.5}\def\yb{+1.0}
\def\xc{+1.0}\def\yc{+0.0}
\def\xd{+0.5}\def\yd{-1.0}
\def\xe{-0.5}\def\ye{-1.0}
\def\xf{-1.0}\def\yf{+0.0}
\def\xg{+0.0}\def\yg{+0.0}
\def\xh{+0.0}\def\yh{+0.5}
\def\xi{+0.5}\def\yi{+0.25}
\def\A{(\xa,\ya)}
\def\B{(\xb,\yb)}
\def\C{(\xc,\yc)}
\def\D{(\xd,\yd)}
\def\E{(\xe,\ye)}
\def\F{(\xf,\yf)}
\def\G{(\xg,\yg)}
\def\H{(\xh,\yh)}
\def\I{(\xi,\yi)}

\def\rb{0.15}
\def\GG{(\xg-\rb/2,\yg-\rb/2)}
\def\HH{(\xh-\rb/2,\yh-\rb/2)}
\def\II{(\xi-\rb/2,\yi-\rb/2)}

\newcommand{\vc}[1] {\draw[draw=black, fill=red!20, line width=0.2mm] #1 circle [radius=1mm];}
\newcommand{\vb}[1] {\draw[draw=black, fill=blue!20] #1 rectangle ++(\rb,\rb);}

\newcommand{\vv}[1] {\draw[draw=gray!70, fill=gray!20, line width=0.1mm] #1 circle [radius=1mm];}
\newcommand{\vw}[1] {\draw[draw=white, fill=white, line width=0.1mm] #1 circle [radius=1mm];}

\newcommand{\eb}[2] {\draw[line width=0.2mm, draw=black] #1 -- #2;}
\newcommand{\er}[2] {\draw[line width=0.2mm, draw=red, densely dotted] #1 -- #2;}
\newcommand{\eg}[2] {\draw[line width=0.2mm, draw=black!20!green, densely dashed] #1 -- #2;}
\newcommand{\egc}[3][] {\draw[line width=0.2mm, draw=black!20!green, densely dashed] #2 to [#1] #3;}

\def\XL{-0.85}
\def\YL{0.5}

\def\P{(0,-1.5)}

\begin{tabular}{@{}c@{~~}c@{~~}c@{~~}c@{~~}c@{}}
\begin{tikzpicture}
\vv \A; \vc \B; \vv \C; \vc \D; \vv \E; \vc \F;

\node[red] at (\xb-0.40,\yb-0.25) {\footnotesize$\e_0-\e_1-\e_3$};
\node[red] at (\xd-0.40,\yd+0.25) {\footnotesize$\e_0-\e_2-\e_3$};
\node[red] at (\xf+1.00,\yf+0.00) {\footnotesize$\e_0-\e_1-\e_2$};

\node[black] at \P {vertex labels};
\end{tikzpicture}
&
\begin{tikzpicture}
\vc \A; \vv \B; \vc \C; \vv \D; \vc \E; \vv \F;
\vb \GG; \vb \HH; \vb \II;

\node[red] at (\xa+0.30,\ya-0.1) {\footnotesize$\e_1$};
\node[red] at (\xc-0.05,\yc+0.25) {\footnotesize$\e_3$};
\node[red] at (\xe+0.30,\ye+0.1) {\footnotesize$\e_2$};
\node[blue] at (\xg-0.05,\yg-0.25) {\tiny$\e_0-\e_1-\e_2-\e_3$};
\node[blue] at (\xh-0.55,\yh+0.1) {\tiny$\e_1-\e_2$};
\node[blue] at (\xi-0.55,\yi+0.0) {\tiny$\e_2-\e_3$};

\node[black] at \P {vertex labels};
\end{tikzpicture}
&
\begin{tikzpicture}
\vw \A; \vw \B; \vw \C; \vw \D; \vw \E; \vw \F;

\eb{(\XL,\YL+0.0)}{(\XL+1,\YL+0.0)}; \node[black, right]          at (\XL+1,\YL+0.0) {:\,$1$};
\er{(\XL,\YL-0.5)}{(\XL+1,\YL-0.5)}; \node[red, right]            at (\XL+1,\YL-0.5) {:\,$-1$};
\eg{(\XL,\YL-1.0)}{(\XL+1,\YL-1.0)}; \node[black!20!green, right] at (\XL+1,\YL-1.0) {:\,$\infty$};

\node[black] at \P {edge labels};
\end{tikzpicture}
&
\begin{tikzpicture}
\eb \A \B; \eb \B \C; \eb \C \D; \eb \D \E; \eb \E \F; \eb \F \A;
\vc \A; \vc \B; \vc \C; \vc \D; \vc \E; \vc \F;
\node[black] at \P {$(A_0,A_0)$};
\end{tikzpicture}
&
\begin{tikzpicture}
\eb \A \B; \eb \B \C; \eb \C \D; \eb \D \E; \eb \E \F; \eb \F \A;
\eb \B \H; \eb \E \H;
\er \A \H; \er \D \H;
\vc \A; \vc \B; \vc \C; \vc \D; \vc \E; \vc \F;
\vb \HH
\node[black] at \P {$(A_0,\underline{A_1})$};
\end{tikzpicture}
\\
\begin{tikzpicture}
\eb \A \B; \eb \B \C; \eb \C \D; \eb \D \E; \eb \E \F; \eb \F \A;
\eb \A \G; \eb \C \G; \eb \E \G;  
\er \B \G; \er \F \G; \er \D \G;
\vc \A; \vc \B; \vc \C; \vc \D; \vc \E; \vc \F;
\vb \GG;
\node[black] at \P {$(A_0,\underline{A_1})$};
\end{tikzpicture}
&
\begin{tikzpicture}
\eb \A \B; \eb \B \C; \eb \C \D; \eb \D \E; \eb \E \F; \eb \F \A;
\eb \B \H; \eb \E \H;
\eb \H \I; \eb \C \I; \eb \F \I;
\er \A \H; \er \D \H;
\er \B \I; \er \E \I;
\vc \A; \vc \B; \vc \C; \vc \D; \vc \E; \vc \F;
\vb \HH; \vb \II;
\node[black] at \P {\small$(A_0,\underline{A_2})$};
\end{tikzpicture}
&
\begin{tikzpicture}
\eb \A \B; \eb \B \C; \eb \C \D; \eb \D \E; \eb \E \F; \eb \F \A;
\eb \A \G; \eb \C \G; \eb \E \G;  
\er \B \G; \er \F \G; \er \D \G;

\eb \B \H; \eb \E \H;
\er \A \H; \er \D \H;

\vc \A; \vc \B; \vc \C; \vc \D; \vc \E; \vc \F;
\vb \GG; \vb \HH;

\node[black] at \P {$(A_0,2\underline{A_1})$};
\end{tikzpicture}
&
\begin{tikzpicture}
\eb \A \B; \eb \B \C; \eb \C \D; \eb \D \E; \eb \E \F; \eb \F \A;
\eb \A \G; \eb \C \G; \eb \E \G;  
\eb \B \H; \eb \E \H;
\eb \H \I; \eb \C \I; \eb \F \I;
\er \B \G; \er \F \G; \er \D \G;
\er \A \H; \er \D \H;
\er \B \I; \er \E \I;
\vc \A; \vc \B; \vc \C; \vc \D; \vc \E; \vc \F;
\vb \GG; \vb \HH; \vb \II;
\node[black] at \P {\small$(A_0,\underline{A_1}+\underline{A_2})$};
\end{tikzpicture}
&
\begin{tikzpicture}
\eb \A \B; \eb \B \C; \eb \C \D; \eb \D \E; \eb \E \F; \eb \F \A;
\eg \A \E; \eg \B \D;
\vc \A; \vc \B; \vc \C; \vc \D; \vc \E; \vc \F;
\node[black] at \P {$(A_1,A_0)$};
\end{tikzpicture}
\\
\begin{tikzpicture}
\eb \A \B; \eb \B \C; \eb \C \D; \eb \D \E; \eb \E \F; \eb \F \A;
\eb \A \G; \eb \C \G; \eb \E \G;  
\er \B \G; \er \F \G; \er \D \G;
\eg \A \E; \eg \B \D;
\vc \A; \vc \B; \vc \C; \vc \D; \vc \E; \vc \F;
\vb \GG;
\node[black] at \P {$(A_1,\underline{A_1})$};
\end{tikzpicture}
&
\begin{tikzpicture}
\eb \A \B; \eb \B \C; \eb \C \D; \eb \D \E; \eb \E \F; \eb \F \A;
\eg \A \D; \eg \B \E; \eg \C \F;
\vc \A; \vc \B; \vc \C; \vc \D; \vc \E; \vc \F;
\node[black] at \P {$(A_1',A_0)$};
\end{tikzpicture}
&
\begin{tikzpicture}
\eb \A \B; \eb \B \C; \eb \C \D; \eb \D \E; \eb \E \F; \eb \F \A;
\eb \B \H; \eb \E \H;
\er \A \H; \er \D \H;
\eg \A \D; \eg \B \E; \eg \C \F;
\vc \A; \vc \B; \vc \C; \vc \D; \vc \E; \vc \F;
\vb \HH;
\node[black] at \P {$(A_1',\underline{A_1})$};
\end{tikzpicture}
&
\begin{tikzpicture}
\eb \A \B; \eb \B \C; \eb \C \D; \eb \D \E; \eb \E \F; \eb \F \A;
\eb \B \H; \eb \E \H;
\eb \H \I; \eb \C \I; \eb \F \I;
\er \A \H; \er \D \H;
\er \B \I; \er \E \I;
\eg \A \D; \eg \B \E; \eg \C \F;
\vc \A; \vc \B; \vc \C; \vc \D; \vc \E; \vc \F;
\vb \HH; \vb \II;
\node[black] at \P {$(A_1',\underline{A_2})$};
\end{tikzpicture}
&
\begin{tikzpicture}
\eb \A \B; \eb \B \C; \eb \C \D; \eb \D \E; \eb \E \F; \eb \F \A;
\egc[out=-45, in=180+45]{\A}{(\xb,\yb)}; \egc[out= 45, in=180-45]{\E}{(\xd,\yd)};
\eg \F \C; 
\vc \A; \vc \B; \vc \C; \vc \D; \vc \E; \vc \F;
\node[black] at \P {$(2A_1,A_0)$};
\end{tikzpicture}
\end{tabular}
\end{table}

The output of \ALG{bas} with $r=3$ consists of 
$\emptyset$, 
$\{\e_1-\e_2\}$,
$\{\e_0-\e_1-\e_2-\e_3\}$, 
$\{\e_1-\e_2, \e_2-\e_3\}$,
$\{\e_1-\e_2, \e_0-\e_1-\e_2-\e_3\}$ and
$\{\e_1-\e_2, \e_2-\e_3, \e_0-\e_1-\e_2-\e_3\}$.

The output of \ALG{inv} with $r=3$ is a set consisting of the following pairs
(here the matrix $M$ is represented as
$[\sigma_*(\e_0),\sigma_*(\e_1),\sigma_*(\e_2),\sigma_*(\e_3)]$):
\begin{Mlist}
\item $(\emptyset,[\e_0,\e_1,\e_2,\e_3])$,
\item $(\{\e_1-\e_2\},[\e_0,\e_2,\e_1,\e_3])$,
\item $(\{\e_0-\e_1-\e_2-\e_3\},[2\e_0-\e_1-\e_2-\e_3,\e_0-\e_2-\e_3,\e_0-\e_1-\e_3,\e_0-\e_1-\e_2])$,
\item $(\{\e_1-\e_2,\e_0-\e_1-\e_2-\e_3\},[2\e_0-\e_1-\e_2-\e_3,\e_0-\e_1-\e_3,\e_0-\e_2-\e_3,\e_0-\e_1-\e_2])$, 
\end{Mlist}
where $\FD(A(X))$ 
is $A_0$, $A_1$, $A_1'$ and $2A_1$, \resp.
Notice that $\FD(A(X))$ does not uniquely determine $\sigma_*$ up to conjugacy,
so we added a $'$-symbol in case $\sigma_*(\e_0)\neq \e_0$.

If $A=\{\e_1-\e_2,\e_2-\e_3,\e_0-\e_1-\e_2-\e_3\}$ at step (1) in \ALG{inv}, then 
the kernel of $A\cdot J$ has basis $\{\e_0-\e_1-\e_2-\e_3,~\k\}$
so that at step (2):
\begin{center}
\tiny
$
Q=
\begin{bmatrix}
 0 &  0 &  1 & -3 \\
 1 &  0 & -1 & 1  \\
-1 &  1 & -1 & 1  \\
 0 & -1 & -1 & 1  \\
\end{bmatrix}
$
,\qquad
$
D=
\begin{bmatrix}
-1 &  0 &  0 & 0 \\
 0 & -1 &  0 & 0 \\
 0 &  0 & -1 & 0 \\
 0 &  0 &  0 & 1 \\
\end{bmatrix}
$
,\qquad
$
M=Q\cdot D\cdot Q^{-1}=
\begin{bmatrix}
 2 &    1 &    1 &    1 \\ 
-1 & -\frac{4}{3} & -\frac{1}{3} & -\frac{1}{3} \\
-1 & -\frac{1}{3} & -\frac{4}{3} & -\frac{1}{3} \\ 
-1 & -\frac{1}{3} & -\frac{1}{3} & -\frac{4}{3} 
\end{bmatrix}
$.
\end{center}
Thus $M$ does in this case not define a unimodular involution
and is not added to the output $\Psi$ at step (3). 
When we follow \ALG{cls} with the output of \ALG{inv} we
recover \TAB{crem} and rows 3 until 14 in \TAB{nsl}.
For example, 
suppose that 
$(A,M)=(\{\e_1-\e_2\},[\e_0,\e_2,\e_1,\e_3])$
in the outer for-loop of \ALG{cls}.
We find that {\tt get\_cls}$(2)=\{(\emptyset,\emptyset,I), (\e_1-\e_2,\emptyset,I),(\emptyset,\e_1-\e_2,[\e_0,\e_2,\e_1])\}$
so that $\Psi_1=\{\emptyset\}\neq\emptyset$.
We have $\xi=\{\e_0-\e_1-\e_2-\e_3\}$ and $\Phi=\{\e_1-\e_3\}$ where $M(\e_1-\e_3)=\e_2-\e_3$.
Thus $\Psi_2=\{\emptyset,\e_0-\e_1-\e_2-\e_3\}$ and $\Psi_3=\{\emptyset,\e_1-\e_3\}$.
If $(B_1,B_2,B_3)=(\emptyset,\emptyset,\e_1-\e_3)$, then $B$ is not a root base, since $(\e_1-\e_3)\cdot(\e_2-\e_3)\notin\{0,1\}$.
\END
\end{example}

\begin{lemma}
\label{lem:root}
The output specification of \ALG{cls} is correct.
\end{lemma}

\begin{proof}
The correctness of both \ALG{basis} and \ALG{seek} is a direct consequence of the definitions.
It follows from \LEM{R} that we only have to consider roots in~$R^+$.

Notice that the columns of $Q$ at step 1 of \ALG{inv} 
are eigenvectors of $M$ so that $M\cdot Q=D\cdot Q$.
The correctness of \ALG{inv} is thus a straightforward consequence of 
\PRP{sub}d and \RMK{A}.

For \ALG{bas} we recall from \LEM{blowdown} that if $Y\to X$ is the smooth model
of a weak del Pezzo surface $X$, 
then $Y$ is the complex blowup of a weak del Pezzo surface~$X'$. 
Thus by \PRP{coord}a,
$N(X)\cong N(X')\oplus \md{\e_r}_\Z$, $N(X')\cong \md{\e_0,\ldots,\e_{r-1}}_\Z$
and $B(X)$ is the union of $B(X')$ (represented by $B_1$) 
with $B_2\subset \set{c\in R(X)}{c\cdot \e_r\neq 0}$.
We know from \PRP{sub}b that $B(X)$ and $B(X')$ are root bases. 
Thus $B_2$ is a root base as well and the correctness of \ALG{bas} follows.

For \ALG{cls} we apply the same argument as we used for \ALG{bas}.
However, in this case $B(X')\subset B(X)$ only if $\sigma_*(\e_r)=\e_r$, 
which means that $\Psi_1\neq \emptyset$.
Notice that $A(X)$ determines $\sigma_*$ only up to a choice of a basis for
the sublattice $\set{c\in N(X)}{ c\cdot a=0 \text{ for all } a\in A(X) }$.
It follows from \LEM{R} that $B(X)\subset R^+$ for any such choice of $\sigma_*$.
However, we need to exhaust all possible root bases $B(X)\subset R^+$ \st $\sigma_*(B(X))=B(X)$.
Any such candidate $B$ for $B(X)$ can be constructed from 
some $(B_1,B_2,B_3)\in \Psi_1\times \Psi_2\times \Psi_3$.
We know from \LEM{h0} that $B(X)$ uniquely determines $h^0\c N(X)\to\Z_{\geq0}$.
We conclude from \PRP{cremona} that 
the output specification of \ALG{cls} is correct.
\end{proof}

\begin{proof}[Proof of \THM{nsl}]
Notice that 
\LEM{G}[a,b,c,d,e,f]
correspond to the rows (i), (ii), (iii), (iv), (v) and (2) in \TAB{nsl}, \resp.
By \PRP{coord}c, we can compute $E_\R(X)$ and $F_\R(X)$ from $B(X)$.
Thus the proof is a direct consequence of \LEM{G} and \LEM{root}.
It follows from \citep[Section~8.7.1 and Theorem~8.8.1]{dol1} that if $X$ has 
canonical degree $\leq 2$, then $\BN(X)$ is not characterized by
row 176, 434, 453, 455 or 505. 
\end{proof}

We explain in the following example how to read \TAB{nsl}.

\begin{example}
\label{exm:howto}
Suppose that $X\subset\P^3$ is a ring cyclide as in \FIG{simple}
so that $X$ is a quartic weak del Pezzo surface that is 
covered by four simple families of conics
and $X$ contains neither real lines nor real isolated singularities.
It follows from \THM{nsl} that $\BN(X)$ is characterized by a row in \TAB{nsl}. 
The table headers 
$d$, $D(A)$, $D(B)$, $\#E$ and $\#G$
correspond to
$\deg X$, $\FD(A(X))$, $\FD(B(X))$, $|E_\R(X)|$ and $|G(X)|$, \resp. 
The components of $\FD(B(X))$ that are preserved by $\sigma_*$ are 
underlined in table.
It follows from \PRP{class} that $d=4$, the $D(B)$ entry has no underlined components, $\#E=0$ and $\#G=4$.
Hence, $\BN(X)$ must be defined by row number 67 \st $\FD(A(X))=2A_1'$ and $\FD(B(X))=4A_1$.
It follows from \PRP{coord}a that $N(X)\cong\md{\e_0,\ldots,\e_5}_\Z$ and $-\k=\h=3\e_0-\e_1-\ldots-\e_5$.
We recover an explicit description for
$\sigma_*\c N(X)\to N(X)$ and $B(X)$ from 
the entry 
$\frac{9}{5} \frac{3}{6} \frac{3}{5} \frac{3}{4} \frac{0}{5} \frac{0}{4} || \frac{1}{1} \frac{1}{6} \frac{3}{9} \frac{4}{2} $, 
which can be decoded using the dictionary at \TAB{nsl}. 
We find that
$\sigma_*(\e_0)=2\e_0-\e_1-\e_2-\e_3$,     
$\sigma_*(\e_1)=\e_0-\e_2-\e_3$,     
$\sigma_*(\e_2)=\e_0-\e_1-\e_3$,     
$\sigma_*(\e_3)=\e_0-\e_1-\e_2$,    
$\sigma_*(\e_4)=\e_5$
and $B(X)=\{\e_3-\e_4,                 
\e_2-\e_5,              
\e_0-\e_1-\e_3-\e_4,     
\e_0-\e_1-\e_2-\e_5\}$.
Recall from \LEM{h0} that $B(X)$ uniquely recovers
$h^0\c N(X)\to \Z_{\geq 0}$ and thus we determined $\BN(X)$ up to isomorphism of Neron-Severi lattices.
We have $E_\star(X)=\{\e_1,\e_0-\e_2-\e_3, \e_4,\e_5\}$ 
and $F_\R(X)=\{\e_0-\e_1,\e_0-\e_2,\e_0-\e_3,2\e_0-\e_2-\e_3-\e_4-\e_5\}$,
by \PRP{coord}, and we know that $G(X)=F_\R(X)$ by \LEM{G}.
Recall from \RMK{G} that the simple family graph $\CG(X)$ 
in \FIG{simple} is uniquely determined by 
the incidence diagram $\FD(G(X))$.
Notice that $\e_0-\e_1$ and $2\e_0-\e_2-\e_3-\e_4-\e_5$ are the classes of Villarceau
circles.
\END
\end{example}

\begin{remark}
\label{rmk:AA}
Suppose that $X$ and $X'$ are anticanonical models of smooth weak del Pezzo surfaces.
It is observed in \citep[Section~2]{wal1} that $\BN(X)\cong \BN(X')$ 
does not imply that $\FD(A(X))\cong \FD(A(X'))$.
Indeed, using the Cremona invariant we find that if $\deg X=\deg X'$
and both $\FD(A(X))$ and $\FD(A(X'))$ are listed below at the same bullet, then $\BN(X)\cong \BN(X')$:
\begin{Mlist}
\item $4A_1$, $D_4$.  
\item $5A_1$, $A_1+D_4$.
\item $6A_1$, $2A_3$, $2A_1+D_4$, $D_6$.  
\item $7A_1$, $A_7$, $A_2+A_5$, $A_1+D_6$, $3A_1+D_4$, $A_1+2A_3$, $E_7$.
\item $8A_1$, $A_8$,  $4A_2$, $2A_4$, $2D_4$, $A_3+D_5$, $A_1+A_7$, $A_2+E_6$, $A_1+E_7$, $4A_1+D_4$, $2A_1+D_6$, $2A_1+2A_3$, $A_1+A_2+A_5$, $D_8$, $E_8$. \END
\end{Mlist}
\end{remark}

\section{Simple family graphs via adjunction}
\label{sec:simple}

In this section we prove \THM{coarse} and \THM{pmz}.
We start by recalling a version of adjunction 
as defined in \citep[Section~2.4]{nls-algo-min-fam}.

A \df{ruled pair} is defined as a pair $(Y,\h)$
where $Y$ is a smooth birationally ruled surface, 
$\h$ is nef and there are no classes of complex $(-1)$-curves that are orthogonal to~$\h$.

Suppose that $Y_0\to X$ is the smooth model
of a surface $X\subset \P^n$ that is covered by a simple family.
Let $h_0\in N(X)$ be the class of hyperplane sections.
In this case, $(Y_0,\h_0)$ is a ruled pair.
If $h^0(\h_0+\k_0)>1$, then we consider the birational morphism
$\mu\c Y_0\to Y_1$ that contracts all complex $(-1)$-curves $E\subset Y_0$
\st $(\h_0+\k_0)\cdot [E]=0$. An \df{adjoint relation} is defined as
\[
\arrow{(Y_0,\h_0)}{\mu}{(Y_1,\h_1):=(\mu(Y_0),\mu_*(\h_0+\k_0))}.
\]
This relation $\mu$ is unique up to biregular isomorphism and $(Y_1,\h_1)$ is again a ruled pair \citep[Proposition~1]{nls-algo-min-fam}.
In particular, the adjoint relation satisfies the following property:

\begin{lemma}
\label{lem:dis}
If $\arrow{(Y,\h)}{\mu}{(Y',\h')}$ is an adjoint relation \st $\h'^2>0$
and $E,E'\subset Y$ are complex $(-1)$-curves that are contracted by $\mu\c Y\to Y'$,
then $[E]\cdot [E']=0$.
\end{lemma}

\begin{proof}
We have that $\mu^*\h'=\h+\k$, since the classes of complex $(-1)$-curves that are contracted by $\mu$
are by definition orthogonal to $\h+\k$. Thus $(\h+\k)^2=(\mu^*\h')^2=\h'^2>0$.
Since $(\h+\k)\cdot([E]+[E'])=0$ it follows from (HI) that $([E]+[E'])^2<0$ and thus $[E]\cdot [E']=0$.
\end{proof}

The \df{adjoint chain} for $X$ is a chain of adjoint relations 
\[
\arrow{(Y_0,\h_0)}{\mu_0}{(Y_1,\h_1)}\arrow{}{\mu_1}{}\ldots\arrow{}{\mu_{\ell-1}}{(Y_\ell,\h_\ell)},
\]
\st $h^0(\h_i+\k_i)>1$ for $0\leq i< \ell$ and $h^0(\h_\ell+\k_\ell)\leq 1$.
Although $Y_i$ for $0\leq i\leq \ell$ is considered as an abstract surface it comes equiped 
with the class of hyperplane sections $\h_i$ and the canonical class $\k_i$.
This motivates us to consider the following definition for the classes of
simple families of $Y_i$:
\[
G(Y_i):=\set{c\in N(Y_i)}{ p_a(c)=0,~  h^0(c)\geq 2 \text{ and } \h_i\cdot c=\nu_i }, 
\]
where  
$\nu_i:=\min\set{ \h_i\cdot c }{ c\in N(Y_i),~ p_a(c)=0 \text{ and } h^0(c)\geq 2}$.
Notice that $G(X)=G(Y_0)$ by definition.
Recall from \RMK{G} that we recover the simple family graph $\CG(X)$ uniquely from $G(X)$.

The following \LEM{bas} and \LEM{pp} collect results from \cite{nls-algo-min-fam}.
We use the following notation for $t\in\Z$:
$[t]:=1$ and $[t]:=0$ if $t\geq 0$ and $t<0$, \resp.

\begin{lemma}
\label{lem:bas}
If $\arrow{(Y_0,\h_0)}{\mu_0}{(Y_1,\h_1)}\arrow{}{\mu_1}{}\ldots\arrow{}{\mu_{\ell-1}}{(Y_\ell,\h_\ell)}$
is an adjoint chain, then either 
$G(Y_0)=\{f\}$ \st $\k_0\cdot f=-2$,
or
$Y_\ell$ is a weak del Pezzo surface and one of the following four cases holds:
\begin{MenumA}

\item $G(Y_0)=\set{(\mu_{\ell-1}\circ\ldots\circ \mu_0)^*(g)}{g\in G(Y_\ell),~ \k_\ell\cdot g=-2}\neq\emptyset$.

\item $N(Y_i)$ admits a real type~1 basis for all $0\leq i\leq \ell$ \st
\begin{align*}
\h_i &= (\alpha_0-3i)\e_0-[\alpha_1-i](\alpha_1-i)\e_1-\ldots-[\alpha_r-i](\alpha_r-i)\e_r,
\\
\k_i &= -3\e_0+[\alpha_1-i]\e_1+\ldots+[\alpha_r-i]\e_r, 
\end{align*}
where $\alpha_0-3i>0$ and $\alpha_1,\ldots,\alpha_r\in\Z_{>0}$. Moreover, $c\in G(Y_i)$ 
is up to permutation of $(\e_j)_{j>0}$
an element of 
$\{\e_0,~\e_0-\e_1,~2\e_0-\e_1-\e_2-\e_3-\e_4\}$.

\item $N(Y_i)$ admits a real type~2 basis for all $0\leq i\leq \ell$ \st
\begin{align*}
\h_i &= (\alpha_0-2i)(\l_0+\l_1)-[\alpha_1-i](\alpha_1-i)\p_1-\ldots-[\alpha_r-i](\alpha_r-i)\p_r,
\\
\k_i &= -2(\l_0+\l_1)+[\alpha_1-i]\p_1+\ldots+[\alpha_r-i]\p_r, 
\end{align*}
where $\alpha_0-2i>0$ and $\alpha_1,\ldots,\alpha_r\in\Z_{>0}$.
Moreover, $c\in G(Y_i)$ 
is up to permutation of $(\p_j)_{j>0}$
an element of 
$\{\l_0+\l_1,~\l_0+\l_1-\p_1,~ \l_0+\l_1-\p_1-\p_2\}$.

\item $G(Y_0)=G(Y_\ell)=\emptyset$ and $Y_\ell$ is not $\R$-rational.

\end{MenumA}
\end{lemma}

\begin{proof}
See \citep[Lemma~1 and Lemma~2]{nls-algo-min-fam} for the case distinction
(this result is an adaptation of the classification result by Comessatti \citep[Theorem~4.6]{sil1}).
For case (a) see also \citep[Proposition~3 or Theorem~1i]{nls-algo-min-fam}. 
For the candidate classes at (b) and (c) see \citep[Theorem~1ii and Theorem~1iii]{nls-algo-min-fam}.
\end{proof}

\begin{lemma}
\label{lem:pp}
Let $\arrow{(Y_0,\h_0)}{\mu_0}{(Y_1,\h_1)}\arrow{}{\mu_1}{}\ldots\arrow{}{\mu_{\ell-1}}{(Y_\ell,\h_\ell)}$
be an adjoint chain. 
\begin{Mlist}

\mclaim{a}
If $Y_i\cong\P^2$ for some $0<i\leq \ell$ and $\mu_{i-1}$ contracts at least one real $(-1)$-curve,
then $G(Y_{i-1})=\set{ \e_0-\e_j  }{ \mu_*(\e_j)=\e_j \text{ for } 1\leq j\leq r }$.

\mclaim{b}
If $G(Y_i)=\set{g\in G(Y_i)}{ \k_i\cdot g=-2}$ for some $0<i\leq \ell$, 
then $G(Y_{i-1})=\set{\mu_{i-1}^*(g)}{g\in G(Y_i) \text{ and } \k_i\cdot g=-2}$.

\mclaim{c} $h^0(g)=-\k_0\cdot g-1$ for all $g\in G(Y_0)$.
\end{Mlist}
\end{lemma}

\begin{proof}
For \refmclaim{a}, \refmclaim{b} 
and \refmclaim{c}, see 
\citep[Lemma~12]{nls-algo-min-fam},
\citep[Proposition~3]{nls-algo-min-fam} and
\citep[Corollary~1b]{nls-algo-min-fam},
\resp.
\end{proof}

\begin{lemma}
\label{lem:fix}
Let $(Y,\h)$ be a ruled pair.
\begin{Mlist}
\mclaim{a}
Suppose that $N(Y)$ admits a real type 1 basis
\st in addition $\sigma_*(\e_1)=\e_2$ and $\sigma_*(\e_3)=\e_4$.
If $\h\cdot g < \h\cdot \e_0$ where $g:=2\e_0-\e_1-\e_2-\e_3-\e_4$, 
then $g$ is the class of a 
complete rational family.

\mclaim{b}
Suppose that $N(Y)$ admits a real type 2 basis
\st in addition $\sigma_*(\l_0)=\l_1$ and $\sigma_*\{\p_1,\p_2\}=\{\p_1,\p_2\}$.
In this case $f:=\l_0+\l_1-\p_1-\p_2$ is the class of a 
complete rational family.
\end{Mlist}
\end{lemma}

\newpage
\begin{proof}
\refmclaim{a} 
We consider $g$ as the class of 
the linear series $|g|$ of conics in $\P^2$ that pass through four base points.
Suppose by contradiction that $|g|$ has a fixed component. 
If $h^0(\e_0-\e_1-\e_2-\e_3)=1$, 
then $h^0(\e_0-\e_1-\e_2-\e_3-\e_4)=1$ as well, since $\sigma_*(\e_3)=\e_4$ by assumption.
It follows that $\e_0$ is the class of the moving component 
and $\e_0-\e_1-\e_2-\e_3-\e_4$ is the class of the fixed component 
so that $h^0(\e_0-\e_1-\e_2-\e_3-\e_4)=1$. 
Since $\h\cdot g < \h\cdot \e_0$ we find that $\h\cdot(\e_0-\e_1-\e_2-\e_3-\e_4)<0$.
We arrived at a contradication as $\h$ is nef.
Therefore $|g|$ has no fixed components and thus $g$ is the class of a 
complete rational family.

\refmclaim{b}
We consider $f$ as the class of 
the linear series $|f|$ of bidegree (1,1) forms on $\P^1\times\P^1$
that pass through two base points.
If $|f|$ would have a fixed component, then 
\Wlog $\l_0$ is the class of the moving component and 
$\l_1-\p_1-\p_2$ is the class of the fixed component.
However, this implies that $\sigma_*(\l_0)=\l_0$ and thus 
the linear series of $f$ has no fixed components.
We conclude from (AF) that $f$ is as asserted.
\end{proof}

\begin{notation}
\label{ntn:surf}
We consider the following types of real abstract surfaces:
\begin{Mlist}
\item[] $\BP_0$: $\P^2$ together with its unique real structure.
\item[] $\BP_1$: $\P^2$ blownup in at least 1 real point.
\item[] $\BP_2$: $\P^2$ blownup in exactly 2 complex conjugate points.
\item[] $\BP_4$: $\P^2$ blownup in at least 4 complex conjugate points and no real points.
\item[] $\BS_0$: $\P^1\times\P^1$ together with the real structure $\sigma$ that flips the two factors.
\item[] $\BS_1$: $\BS_0$ blown up in exactly 1 real point.
\item[] $\BS_2$: $\BS_0$ blown up in at least 2 complex points.
\end{Mlist}
For example, we write $Y_0\in \BP_1$ if $Y_0$ is the blowup of $\P^2$ in at least 1 real point;  
we use the same notation for the other cases.
\END
\end{notation}

\begin{table}[!ht] 
\setstretch{1.2}
\caption{See \LEM{case} and \NTN{surf}.}
\label{tab:case}
\noindent\rule{\textwidth}{0.4pt}
Notation:
\begin{Mlist}
\item[] $\alpha:=\max(\set{0\leq i<\ell}{ Y_i\ncong Y_\ell}\cup\{-1\})$; if $\alpha=-1$, then $\alpha:=\ell$.
\item[] $\beta:=\max(\set{0\leq i\leq \ell}{ G(Y_i)=\set{g\in G(Y_i)}{ \k_i\cdot g=-2 }}\cup\{0\})$.   
\item[] $\tau:=\rnk N(Y_\beta)-1$.
\item[] $\xi_1:=\set{\e_0-\e_a}{\sigma_*\e_a=\e_a,~0< a\leq \tau}$.
\item[] $\xi_2:=\set{2\e_0-\e_1-\e_2-\e_a-\e_b}{ \sigma_*\e_a=\e_b,~ 3\leq a<b\leq \tau }$.
\item[] $\xi_3:=\set{2\e_0-\e_a-\e_b-\e_c-\e_d}{ \sigma_*\e_a=\e_b,~ \sigma_*\e_c=\e_d,~0<a<b<c<d\leq\tau}$.
\item[] $\xi_4:=\set{\l_0+\l_1-\p_1-\p_a}{ \sigma_*\p_a=\p_a,~ 0<a\leq \tau}$.
\item[] $\xi_5:=\set{\l_0+\l_1-\p_a-\p_b}{ \sigma_*\{\p_a,\p_b\}=\{\p_a,\p_b\},~ 0<a<b\leq \tau}$.
\item[] $\xi_2'\subseteq \xi_2$\quad{and}\quad$\xi_3'\subseteq \xi_3$.
\end{Mlist}
Cases:
\begin{MenumA}


\item 
$Y_\alpha\in \BP_1$, $Y_\ell\in\BP_0$ and $G(Y_0)=\xi_1$.

\item 
$Y_0\in \BP_2$ or $Y_0\in\BP_0$, and $G(Y_0)=\{\e_0\}$.

\item 
$Y_0\notin \BP_2$, $Y_\alpha\in \BP_2$  and $G(Y_0) \in \{\xi_1,~\xi_2,~\xi_2'\cup\{\e_0\},~\xi_2\cup\xi_1\}$.

\item 
$Y_\alpha\in\BP_4$, $Y_\ell\in \BP_0$ and $G(Y_0)=\{\xi_3,~\xi_3'\cup\{\e_0\},~\xi_3\cup\xi_1\}$.

\item $Y_0\in\BS_0$ and $G(Y_0)=\{\l_0+\l_1\}$.
\item $Y_0\in\BS_1$ and $G(Y_0)=\{\l_0+\l_1-\p_1\}$.
\item $Y_0\in\BS_2$, $Y_\alpha\in\BS_1$ and 
$G(Y_0)\in\{\xi_4,~\xi_5\cup\{\l_0+\l_1-\p_1\},~\{\l_0+\l_1-\p_1\}\}$.

\item $Y_\alpha\in\BS_2$, $Y_\ell\in\BS_0$ and $G(Y_0)=\xi_5$.

\end{MenumA}
\noindent\rule{\textwidth}{0.4pt}
\end{table}

\begin{lemma}
\label{lem:case}
Let $\arrow{(Y_0,\h_0)}{\mu_0}{(Y_1,\h_1)}\arrow{}{\mu_1}{}\ldots\arrow{}{\mu_{\ell-1}}{(Y_\ell,\h_\ell)}$
be an adjoint chain.
If $Y_\ell$ is a weak del Pezzo surface, 
$\set{g\in G(Y_\ell)}{ \k_\ell\cdot g=-2 }=\emptyset$ and $G(Y_\ell)\neq \emptyset$,
then $N(Y_0)$ is generated by either a real type 1 basis or a real type 2 basis
and one of the 8 cases in \TAB{case}[a-h] holds.
\end{lemma}

Before we prove \LEM{case} and \THM{coarse} let us 
first consider some explicit examples.

\newpage
\begin{example}[\THM{coarse}f]
\label{exm:l}
Suppose that 
$\arrow{(Y_0,\h_0)}{\mu_0}{(Y_1,\h_1)}$
is the adjoint chain of a surface $X\subset\P^n$.
The involution $\sigma_*\c N(Y_0)\to N(Y_0)$ is defined as
$\sigma_*(\e_0)=\e_0$ and $\sigma_*(\e_i)=\e_{i+1}$ for $i\in\{1,3,5,7\}$; and
\begin{Mlist}
\item $Y_1\in\BP_0$, $\h_1=\e_0$, $\k_1=-3\e_0$ and $G(Y_1)=\{\e_0\}$.
\item $Y_0\in\BP_4$, $\h_0=4\e_0-\e_1-\ldots-\e_8$, $\k_0=-3\e_0+\e_1+\ldots+\e_8$
and
$G(Y_0)=\{\e_0$,
$2\e_0-\e_1-\e_2-\e_3-\e_4$, 
$2\e_0-\e_1-\e_2-\e_5-\e_6$,
$2\e_0-\e_1-\e_2-\e_7-\e_8$,
$2\e_0-\e_3-\e_4-\e_5-\e_6$,
$2\e_0-\e_3-\e_4-\e_7-\e_8$,
$2\e_0-\e_5-\e_6-\e_7-\e_8\}$.
\end{Mlist}
The adjoint chain is characterized by \TAB{case}d
and the simple family graph $\CG(X)$ is characterized by
\THM{coarse}f. 
\END
\end{example}

\newpage
\begin{example}[\THM{coarse}h]
\label{exm:h}
Suppose that 
$\arrow{(Y_0,\h_0)}{\mu_0}{(Y_1,\h_1)}\arrow{}{\mu_1}{(Y_2,\h_2)}$
is the adjoint chain of a surface $X\subset\P^n$.
The involution $\sigma_*\c N(Y_0)\to N(Y_0)$ is defined as
$\sigma_*(\e_0)=\e_0$, $\sigma_*(\e_i)=\e_{i+1}$
for $i\in\{1,3,5\}$, $\sigma_*(\e_7)=\e_7$, $\sigma_*(\e_8)=\e_8$ and $\sigma_*(\e_9)=\e_9$; and
\begin{Mlist}
\item  
$Y_2\in\BP_2$, $\h_2=3\e_0-\e_1-\e_2$, $\k_2=-3\e_0+\e_1+\e_2$ 
and 
$G(Y_2)=\{\e_0\}$,

\item 
$Y_1\in\BP_4$,
$\h_1=6\e_0-2\e_1-2\e_2-\e_3-\e_4-\e_5-\e_6$,
$\k_1=-3\e_0+\e_1+\ldots+\e_6$
and 
$G(Y_1)=\{\e_0$, $2\e_0-\e_1-\e_2-\e_3-\e_4$, $2\e_0-\e_1-\e_2-\e_5-\e_6\}$,

\item 
$Y_0\notin\BP_2$,
$\h_0=9\e_0-3\e_1-3\e_2-2\e_3-2\e_4-2\e_5-2\e_6-\e_7-\e_8-\e_9$, 
$\k_0=-3\e_0+\e_1+\ldots+\e_9$,
and
$G(Y_0)=\{2\e_0-\e_1-\e_2-\e_3-\e_4$,  
$2\e_0-\e_1-\e_2-\e_5-\e_6$,
$\e_0-\e_7$,
$\e_0-\e_8$,
$\e_0-\e_9\}$.
\end{Mlist}
The adjoint chain is characterized by \TAB{case}c
and $\CG(X)$ is characterized by \THM{coarse}h.
\END
\end{example}

\begin{example}[\THM{coarse}j]
\label{exm:j}
Suppose that 
$\arrow{(Y_0,\h_0)}{\mu_0}{(Y_1,\h_1)}$
is the adjoint chain of a surface $X\subset\P^n$.
The involution $\sigma_*\c N(Y_0)\to N(Y_0)$ is defined as
$\sigma_*(\l_0)=\l_1$, 
$\sigma_*(\p_i)=\p_{i}$
for $i\in\{1,2,3\}$ and $\sigma_*(\p_i)=\p_{i+1}$ for $i\in\{4,6,8\}$; and
\begin{Mlist}
\item $Y_1\in \BS_0$, $\h_1=\l_0+\l_1$, $\k_1=-2(\l_0+\l_1)$ and
$G(X_1)=\{\l_0+\l_1\}$, 

\item 
$Y_0\in \BS_2$,
$\h_0=3(\l_0+\l_1)-\p_1-\ldots-\p_5$,
$\k_0=-2(\l_0+\l_1)+\p_1+\ldots+\p_5$ and
$G(X_0)=\{
 \l_0+\l_1-\p_1-\p_2$,
$\l_0+\l_1-\p_1-\p_3$,
$\l_0+\l_1-\p_2-\p_3$,
$\l_0+\l_1-\p_4-\p_5$,
$\l_0+\l_1-\p_6-\p_7$,
$\l_0+\l_1-\p_8-\p_9\}$.
\end{Mlist}
The adjoint chain is characterized by \TAB{case}h
and $\CG(X)$ is characterized by \THM{coarse}j. 
\END
\end{example}

\begin{proof}[Proof of \LEM{case}.]
Our strategy is to backtrace through the adjoint chain and  
determine all possible scenarios for $G(Y_i)$ for $i=\ell-1,\ell-2,\ldots,0$ subsequently.
Recall from \LEM{bas} that 
$N(Y_i)$ admits a real type 1 or 2 basis and that
the possible classes in $G(Y_i)$ 
are up to permutation of the $(\e_j)_{j>0}$ and $(\p_j)_{j>0}$
in the set
\[
\{\e_0, \e_0-\e_1, 2\e_0-\e_1-\e_2-\e_3-\e_4\}\cup
\{\l_0+\l_1, \l_0+\l_1-\p_1,  \l_0+\l_1-\p_1-\p_2\}.
\]
Moreover, we know $\k_i$ and $\h_i$ for $i>0$
if we know $\h_\ell$ and the number of complex $(-1)$-curves
that are contracted during each adjoint relation.
Let $\Upsilon_i:=\set{g\in G(Y_i)}{ \k_i\cdot g=-2}$
and recall that $\Upsilon_\ell=\emptyset$ by assumption.
\newpage
We use the following observations:
\begin{Mlist}

\item If $Y_0\cong Y_\ell$, then  
it follows from \LEM{G}[a,f,c] that
$G(Y_\ell)\in\{\{\e_0\},\{\l_0+\l_1\}\}$ and
one of the cases in \TAB{case}[b,e] holds.

\item 
If $Y_i\ncong\P^2$ and $Y_{i+1}\cong\P^2$, 
then either \TAB{case}a holds or $Y_i\in \BP_2\cup \BP_4$, by \LEM{pp}a. 
Moreover, $\h_\ell=m\,\e_0$ for some $m\in\Z_{>0}$.

\item
If $\Upsilon_i\neq\emptyset$,
then $G(Y_i)\in\{ \Upsilon_i,~\Upsilon_i\cup\{\e_0\},~\Upsilon_i\cup\{\l_0+\l_1-\p_1\} \}$
and if $G(Y_i)=\Upsilon_i$, then 
we may assume \Wlog that $i=0$, by \LEM{pp}b.

\item
If $G(Y_i)\cap\xi_1\neq\emptyset$, then $\e_0\notin G(Y_i)$
and thus $G(Y_i)\in\{\xi_1,\xi_2\cup\xi_1,\xi_3\cup\xi_1\}$ by \LEM{fix}a
and one of \TAB{case}[a,c,d] holds. 

\item
If $G(Y_i)\cap\xi_3\neq\emptyset$ and $\e_0\notin G(Y_i)$,
then $G(Y_i)\in\{\xi_2,\xi_3\}$ by \LEM{fix}a
and one of \TAB{case}[c,d] holds. 

\item
If $G(Y_i)\cap\xi_3\neq\emptyset$, $\e_0\in G(Y_i)$ and $Y_0\cong Y_i$, 
then $G(Y_0)\in\{\xi_2'\cup\{\e_0\}, \xi_3'\cup\{\e_0\} \}$
and one of \TAB{case}[c,d] holds.  We cannot apply \LEM{fix}a in this case.

\item 
If $G(Y_i)\cap\xi_3\neq\emptyset$, $\e_0\in G(Y_i)$ and $Y_0\ncong Y_i$, 
then $G(Y_0)\in\{\xi_2, \xi_3, \xi_2\cup\xi_1, \xi_3\cup\xi_1\}$ by \LEM{fix}a
and one of \TAB{case}[c,d] holds.

\item
If $G(Y_i)\cap\xi_5\neq\emptyset$, 
then $G(Y_i)\in\{\xi_4,\xi_5,\xi_5\cup\{\l_0+\l_1-\p_1\}\}$ 
by \LEM{fix}b and one of \TAB{case}[g,h] holds. 

\end{Mlist}
See \EXM{l}, \EXM{h} and \EXM{j} for possible scenarios. 
We describe two additional scenarios to clarify how 
the above observations are applied.

For example, if $G(Y_\ell)=\{\e_0\}$ and $\mu_{\ell-1}$ 
contracts at least one real $(-1)$-curve, then $G(Y_{\ell-1})=\xi_1$ 
and thus $G(Y_{0})=\xi_1$ so that we are in case \TAB{case}b where $\alpha=\ell-1$.

For another example, suppose that $G(Y_\ell)=\{\l_0+\l_1\}$ and $\h_\ell=\l_0+\l_1$. 
If $\mu_{\ell-1}$ 
contracts exactly one real $(-1)$-curve, then $G(Y_{\ell-1})=\{\l_0+\l_1-\p_1\}$. 
If $\mu_{\ell-2}$ contracts at least one real $(-1)$-curve, 
then $G(Y_{\ell-2})=\xi_4$ so that we are in case \TAB{case}g.
If $\mu_{\ell-2}$ contracts only non-real complex $(-1)$-curves, 
then $h_\ell=5(\l_0+\l_1)-2\p_1-\p_2-\ldots-\p_r$ with $\sigma_*(\p_i)\neq \p_i$ for all $2\leq i\leq r$
so that $G(Y_{\ell-2})=\xi_5\cup\{\l_0+\l_1-\p_1\}$ 
and thus we are again in case \TAB{case}g.

It is straightforward to verify that each scenario is 
characterized by one of the cases in \TAB{case} and that 
for each case in \TAB{case}
there exists a scenario 
that realizes that case.
\end{proof}

\begin{proof}[Proof of \THM{coarse}.]
Suppose that
$\arrow{(Y_0,\h_0)}{\mu_0}{(Y_1,\h_1)}\arrow{}{\mu_1}{}\ldots\arrow{}{\mu_{\ell-1}}{(Y_\ell,\h_\ell)}$
is the unique adjoint chain of~$X$. 
Recall that $G(X)=G(Y_0)$ and that we can recover $\CG(X)$ from $G(X)$ (see \RMK{G} and \LEM{pp}c).
If $Y_\ell$ is not a weak del Pezzo surface, then 
we know from \LEM{bas} that $|G(X)|=1$ so that $\CG(X)$ is characterized by \THM{coarse}d.
We suppose in the remainder of the proof that $Y_\ell$ is a weak del Pezzo surface and that $\CG(X)\neq\emptyset$.
Hence, one of the cases in \LEM{G}[a-g] holds.
If $\set{g\in G(Y_\ell)}{ \k_\ell\cdot g=-2 }\neq\emptyset$, then 
$G(Y_\ell)=F_\R(Y_\ell)$ and thus, by \LEM{pp}b, $\CG(X)$ is characterized by \THM{coarse}a.
Now suppose that $\set{g\in G(Y_\ell)}{ \k_\ell\cdot g=-2 }=\emptyset$.
It follows from \LEM{case} that $G(Y_0)$ is either 
$\{\e_0\}$,                      
$\xi_1$,                         
$\xi_2$,                         
$\xi_2'\cup\{\e_0\}$,            
$\xi_2\cup\xi_1$,                
$\xi_3$                          
$\xi_3'\cup\{\e_0\}$,            
$\xi_3\cup\xi_1$,                
$\{\l_0+\l_1\}$,                 
$\{\l_0+\l_1-\p_1\}$,            
$\xi_4$,                         
$\xi_5\cup\{\l_0+\l_1-\p_1\}$ or 
$\xi_5$                          
so that 
$\CG(X)$ is characterized
by
(c),
(d),
(e),
(k),
(h),
(f),
(l),
(i),
(b),
(c),
(d),
(k) and
(g/j)
in \THM{coarse}, \resp.
\end{proof}

\begin{proof}[Proof of \COR{labels} and \COR{dis}.]
Direct consequence of \THM{nsl} (see also \TAB{F} and \RMK{G}).
For the tedious calculations and checking each row of \TAB{nsl}
for \COR{dis}, we used \citep[{\tt ns\_lattice}]{ns_lattice}.
\end{proof}

\begin{proof}[Proof of \COR{sphere}.]
Notice that \THM{coarse}b must hold and that $G(Y_0)=\{\l_0+\l_1\}$ so that $Y_0\in\BS_0$ by \LEM{case}.
If $Y_0\in \BS_0$, then $\h_0=t\,\l_0 + t\,\l_1$ for some $t\in\Z_{>0}$ and 
thus $Y_0$ is biregular isomorphic to a smooth non-ruled quadric.
We concluded the proof as all such quadrics are projectively equivalent.
Alternatively, see \citep[Corollary~1a]{nls-algo-min-fam}. 
\end{proof}

\begin{proof}[Proof of \THM{pmz}.]
Let $\varphi\c Y\to X$ be the smooth model of~$X\subset\P^n$ and let $\h,\k\in N(X)$
be the class of hyperplane sections and the canonical class, \resp.
Recall from \RMK{G} that two disjoint vertices correspond to 
simple families with classes $f,g \in G(X)$ \st $f\cdot g=1$.
It follows from \THM{coarse} that the vertices must have label 1 so that
$h^0(f)=h^0(g)=2$. 
We know from \LEM{pp}c that $-\k\cdot g=-2$ and 
since $p_a(f)=p_a(g)=0$ it follows from (AF) that $f^2=g^2=0$.
Thus the fibers of the associated maps $\varphi_f\c Y\to \P^1$
and $\varphi_g\c Y\to\P^1$ are $\varphi$-preimages of simple curves on $X$ in
their respective families.
Let $\nu\c Y\to\P^1\times\P^1$
be the morphism defined by $x\mapsto\bigl(\varphi_f(x),\varphi_g(x)\bigr)$.
The map $\nu$ is birational, since the general fiber of $\varphi_f$ and
$\varphi_g$ intersect in one point.  
Thus the inverse of $\nu$ composed with $\varphi$ defines a birational map  
$\P^1\times\P^1\dto X$ of bidegree $(d,d)$,
where $d$ is the degree of a simple curve on~$X$.
A birational map $\tau\c\P^1\times\P^1\dto X$ of 
bidegree $(a,b)$ defines two covering families of rational curves 
with members 
$\tau(\P^1\times\P^1\cap\{t\}\times\P^1)$
of degree $b$
and 
$\tau(\P^1\times\P^1\cap\P^1\times\{t\})$
of degree $a$, \resp.
This concludes the proof as $d$ is by definition 
the lowest possible degree for a rational curve
in a covering family. 
\end{proof}

\section{A characterization of hexagonal webs}

In this section we prove \THM{hex}. 

\begin{definition}
\label{def:hex}
Suppose that $X$ is a surface 
with real structure~$\sigma$.
Let~$\CW$ be a set of curves in~$X$
and let $\CW_p:=\set{C\in\CW}{p\in C}$ for all~$p\in X$.
Let $\CH(\CW)$ be defined as the graph with 
vertex set~$\set{p\in X}{|\CW_p|=3 \text{ and } \sigma(p)=p}$
and labeled edge set
\[
\set{(\{v,w\},C)}{C\in\CW \text{ and } v,w\in C \text{ are pairwise distinct}}.
\]
We call $\CW$ a \df{hexagonal web} if 
the vertex set of $\CH(\CW)$ is not contained in a reducible curve
and if a general edge $\{p,q\}$ of~$\CH(\CW)$
is contained in a subgraph as defined in \FIG{hex-graph},
where the edge-labels $A,B,C,D,E,F,G,H,I\in\CW$ are pairwise distinct.
\END
\end{definition}

\begin{figure}[!ht]
\centering
\begin{tikzpicture}[scale=1.5]
\draw[black,          thick] (-1,0)--(-0.5,1) node [midway, left] {\footnotesize$H$} 
                                   --(0.5,1) node [midway, above] {\footnotesize$F$}
                                   --(1,0) node [midway, right] {\footnotesize$D$}
                                   --(0.5,-1) node [midway, right] {\footnotesize$E$}
                                   --(-0.5,-1) node [midway, below] {\footnotesize$G$}
                                   --(-1,0) node [midway, left] {\footnotesize$I$};
\draw[black!20!green, very thick] (-1,0) --(0,0) node[midway, above=-2pt] {\footnotesize$A$} --(1,0) node[midway, below=-2pt] {\footnotesize$A$};
\draw[red,            very thick] (0.5,1)--(0,0) node[midway, right=-2pt] {\footnotesize$C$} --(-0.5,-1) node[midway, left=-2pt] {\footnotesize$C$};
\draw[cyan,           very thick] (-0.5,1)--(0,0) node[midway, right=-2pt] {\footnotesize$B$} --(0.5,-1) node[midway, left=-2pt] {\footnotesize$B$}; 
\draw[black,fill=white] ( 0  , 0) circle [radius=0.2] node {$p$};
\draw[black,fill=white] (-1  , 0) circle [radius=0.2] node {$ $};
\draw[black,fill=white] ( 1  , 0) circle [radius=0.2] node {$q$};
\draw[black,fill=white] (-0.5, 1) circle [radius=0.2] node {$ $};
\draw[black,fill=white] ( 0.5, 1) circle [radius=0.2] node {$ $};
\draw[black,fill=white] (-0.5,-1) circle [radius=0.2] node {$ $};
\draw[black,fill=white] ( 0.5,-1) circle [radius=0.2] node {$ $};
\end{tikzpicture}
\caption{See \DEF{hex}.}
\label{fig:hex-graph}
\end{figure}

\begin{remark}
\label{rmk:hex}
Suppose that $\CW$ is a hexagonal web
and that $\{p,q\}$ in \FIG{hex}a corresponds to a general edge of $\CH(\CW)$.
In \FIG{hex}b we draw all the curves in~$\CW_p\cup\CW_q$
and we obtain at least two new intersection points $r$ and $s$.
In \FIG{hex}c we draw all the curves in $\CW_r\cup\CW_s$
and we obtain again at least two new intersection points. 
We repeat the last step one more time so that 
we obtain a closed hexagon as in \FIG{hex}d. 
\FIG{hex}e is an example of a non-hexagonal web.  
We refer to \cite{nil1} for more information.
\END
\end{remark}

\begin{figure}[!ht]
\centering
\newcommand{\pp}[1] {\draw[draw=black, fill=gray!20, line width=0.1mm] #1 circle [radius=0.5mm];}

\def\tt{0.3}

\newcommand{\cL}[5] {\draw[cyan]           plot [smooth, tension=\tt] coordinates {#1 #2 #3 #4 #5};}
\newcommand{\dL}[4] {\draw[cyan]           plot [smooth, tension=\tt] coordinates {#1 #2 #3 #4};}
\newcommand{\eL}[3][] {\draw[cyan] #2 to [#1] #3;}
\newcommand{\cR}[5] {\draw[red]            plot [smooth, tension=\tt] coordinates {#1 #2 #3 #4 #5};}
\newcommand{\dR}[4] {\draw[red]            plot [smooth, tension=\tt] coordinates {#1 #2 #3 #4};}
\newcommand{\eR}[3][] {\draw[red] #2 to [#1] #3;}
\newcommand{\cH}[6] {\draw[black!20!green] plot [smooth, tension=\tt] coordinates {#1 #2 #3 #4 #5 #6};}
\newcommand{\dH}[4] {\draw[black!20!green] plot [smooth, tension=\tt] coordinates {#1 #2 #3 #4};}
\newcommand{\eH}[3][] {\draw[draw=black!20!green] #2 to [#1] #3;}

\newcommand{\cHH}[6] {\draw[black!20!green,densely dotted] plot [smooth, tension=\tt] coordinates {#1 #2 #3 #4 #5 #6};}

\def\d{0.05}

\def\Ax{-1              }\def\Ay{1              }
\def\Bx{-0.5            }\def\By{1              }
\def\Cx{0               }\def\Cy{1              }
\def\Dx{0.5-\d-\d       }\def\Dy{1              }
\def\Ex{1-\d-\d         }\def\Ey{1              }
\def\Fx{-1+\d+\d        }\def\Fy{-1             }
\def\Gx{-0.5+\d+\d      }\def\Gy{-1             }
\def\Hx{0               }\def\Hy{-1             }
\def\Ix{0.5-\d-\d-\d    }\def\Iy{-1             }
\def\Jx{1               }\def\Jy{-1             }
\def\Kx{-1              }\def\Ky{0.5+\d+\d      }
\def\Lx{-1              }\def\Ly{0.25+\d+\d     }
\def\Mx{-1              }\def\My{0+\d+\d        }
\def\Nx{-1              }\def\Ny{-0.25-\d-\d    }
\def\Ox{-1              }\def\Oy{-0.5           }
\def\Px{1               }\def\Py{0.5+\d+\d      }
\def\Qx{1               }\def\Qy{0.25-\d        }
\def\Rx{1               }\def\Ry{0              }
\def\Sx{1               }\def\Sy{-0.25          }
\def\Tx{1               }\def\Ty{-0.5           }
\def\Ux{-0.25-\d        }\def\Uy{0.25+\d+\d     }
\def\Vx{0.25-\d         }\def\Vy{0.25+\d        }
\def\Wx{0.5             }\def\Wy{0              }
\def\Xx{0.25            }\def\Xy{-0.25          }
\def\Yx{-0.25+\d        }\def\Yy{-0.25-\d       }
\def\Zx{-0.5-\d            }\def\Zy{0              }
\def\ZZx{\Zx}\def\ZZy{\Zy}
\def\OOx{0}\def\OOy{0}

\def\A{(\Ax,\Ay)}
\def\B{(\Bx,\By)}
\def\C{(\Cx,\Cy)}
\def\D{(\Dx,\Dy)}
\def\E{(\Ex,\Ey)}
\def\F{(\Fx,\Fy)}
\def\G{(\Gx,\Gy)}
\def\H{(\Hx,\Hy)}
\def\I{(\Ix,\Iy)}
\def\J{(\Jx,\Jy)}
\def\K{(\Kx,\Ky)}
\def\L{(\Lx,\Ly)}
\def\M{(\Mx,\My)}
\def\N{(\Nx,\Ny)}
\def\O{(\Ox,\Oy)}
\def\P{(\Px,\Py)}
\def\Q{(\Qx,\Qy)}
\def\R{(\Rx,\Ry)}
\def\S{(\Sx,\Sy)}
\def\T{(\Tx,\Ty)}
\def\U{(\Ux,\Uy)}
\def\V{(\Vx,\Vy)}
\def\W{(\Wx,\Wy)}
\def\X{(\Xx,\Xy)}
\def\Y{(\Yx,\Yy)}
\def\Z{(\Zx,\Zy)}
\def\OO{(\OOx,\OOy)}
\def\ZZ{(\ZZx,\ZZy)}

\def\CAP{(0,-1.2)}

\begin{tabular}{ccccc}
\begin{tikzpicture}
\cHH \R \W \OO \Z \ZZ \M;
\pp{\OO};\pp{\W}
\node at (\OOx,\OOy-0.3) {\footnotesize$p$};
\node at (\Wx,\Wy-0.3) {\footnotesize$q$};
\node[white] at \CAP {\tiny hexagonal web};
\end{tikzpicture}
&
\begin{tikzpicture}
\cL \A \U \OO \X \J;
\dL \B \V \W \T;
\cR \E \V \OO \Y \F;
\dR \P \W \X \G;
\cH \R \W \OO \Z \ZZ \M;
\draw[line width=0.7mm, draw=cyan] \W -- \V;
\draw[line width=0.7mm, draw=red]  \W -- \X;
\pp{\OO};\pp{\W};
\pp{\V};\pp{\X};
\node[white] at \CAP {\tiny hexagonal web};
\end{tikzpicture}
&
\begin{tikzpicture}
\cL \A \U \OO \X \J;
\dL \B \V \W \T;
\cR \E \V \OO \Y \F;
\dR \P \W \X \G;
\cH \R \W \OO \Z \ZZ \M;
\dH \Q \V \U \L;
\dH \S \X \Y \N;
\draw[line width=0.7mm, draw=cyan] \W -- \V;
\draw[line width=0.7mm, draw=red]  \W -- \X;
\draw[line width=0.7mm, draw=black!20!green] \V -- \U;
\draw[line width=0.7mm, draw=black!20!green] \X -- \Y;
\pp{\OO};\pp{\W};
\pp{\V};\pp{\X};
\pp{\U};\pp{\Y};
\end{tikzpicture}
&
\begin{tikzpicture}
\cL \A \U \OO \X \J;
\dL \B \V \W \T;
\dL \K \Z \Y \I;

\cR \E \V \OO \Y \F;
\dR \D \U \ZZ \O;
\dR \P \W \X \G;

\cH \R \W \OO \Z \ZZ \M;
\dH \Q \V \U \L;
\dH \S \X \Y \N;

\draw[line width=0.7mm, draw=cyan] \W -- \V;
\draw[line width=0.7mm, draw=red]  \W -- \X;
\draw[line width=0.7mm, draw=black!20!green] \V -- \U;
\draw[line width=0.7mm, draw=black!20!green] \X -- \Y;
\draw[line width=0.7mm, draw=red]  \U -- \ZZ;
\draw[line width=0.7mm, draw=cyan]  \Y -- \Z;

\pp{\OO};\pp{\W};
\pp{\V};\pp{\X};
\pp{\U};\pp{\Y};
\pp{\Z};\pp{\ZZ};

\end{tikzpicture}
&
\def\Ax{-1              }\def\Ay{1               }%
\def\Bx{-0.5            }\def\By{1               }%
\def\Cx{0               }\def\Cy{1               }%
\def\Dx{0.5-\d-\d       }\def\Dy{1               }%
\def\Ex{1-\d-\d-\d-\d   }\def\Ey{1               }%
\def\Fx{-1+\d+\d+\d+\d  }\def\Fy{-1              }%
\def\Gx{-0.5+\d+\d+\d+\d}\def\Gy{-1              }%
\def\Hx{0               }\def\Hy{-1              }%
\def\Ix{0.5-\d-\d-\d    }\def\Iy{-1              }%
\def\Jx{1               }\def\Jy{-1              }%
\def\Kx{-1              }\def\Ky{0.5+\d+\d       }%
\def\Lx{-1              }\def\Ly{0.25+\d+\d+\d+\d}%
\def\Mx{-1              }\def\My{0+\d+\d         }%
\def\Nx{-1              }\def\Ny{-0.25-\d-\d     }%
\def\Ox{-1              }\def\Oy{-0.5            }%
\def\Px{1               }\def\Py{0.5+\d+\d+\d+\d }%
\def\Qx{1               }\def\Qy{0.25-\d         }%
\def\Rx{1               }\def\Ry{0               }%
\def\Sx{1               }\def\Sy{-0.25           }%
\def\Tx{1               }\def\Ty{-0.5            }%
\def\Ux{-0.25-\d        }\def\Uy{0.25+\d+\d      }%
\def\Vx{0.25-\d         }\def\Vy{0.25+\d         }%
\def\Wx{0.5             }\def\Wy{0               }%
\def\Xx{0.25            }\def\Xy{-0.25           }%
\def\Yx{-0.25+\d        }\def\Yy{-0.25-\d        }%
\def\Zx{-0.5+\d+\d      }\def\Zy{0-\d            }%
\def\ZZx{-0.5-\d-\d-\d}\def\ZZy{0}%
\def\OOx{0}\def\OOy{0}%
\begin{tikzpicture}
\cL \A \U \OO \X \J;
\dL \B \V \W \T;
\dL \K \Z \Y \I;
\eL[out=-55, in=135]{\M}{(\Hx,\Hy)};
\eL[out=-45, in=135]{\C}{(\Rx,\Ry)};

\cR \E \V \OO \Y \F;
\dR \D \U \ZZ \O;
\dR \P \W \X \G;
\eR[out=200, in=45]{\R}{(\Hx,\Hy)};
\eR[out=220, in=45]{\C}{(\Mx,\My)};

\cH \R \W \OO \Z \ZZ \M;
\dH \Q \V \U \L;
\dH \S \X \Y \N;
\eH[out=190, in=15]{\P}{(\Kx,\Ky)};
\eH[out=190, in=-25]{\T}{(\Ox,\Oy)};

\draw[line width=0.7mm, draw=cyan] \W -- \V;
\draw[line width=0.7mm, draw=red]  \W -- \X;
\draw[line width=0.7mm, draw=black!20!green] \V -- \U;
\draw[line width=0.7mm, draw=black!20!green] \X -- \Y;
\draw[line width=0.7mm, draw=red]  \U -- \ZZ;
\draw[line width=0.7mm, draw=cyan]  \Y -- \Z;  

\pp{\OO};\pp{\W};
\pp{\V};\pp{\X};
\pp{\U};\pp{\Y};
\pp{\Z};\pp{\ZZ};
\end{tikzpicture}
\\
{\bf a} & {\bf b} & {\bf c} & {\bf d} & {\bf e}
\end{tabular}
\caption{See \RMK{hex}.}
\label{fig:hex}
\end{figure}

\newpage
\begin{definition}
\label{def:chex}
For the complex analogue of \DEF{hex}
we suppose that $X$ is a complex surface. 
Let~$\CW$ be a set of complex curves in~$X$
and as before let $\CW_p:=\set{C\in\CW}{p\in C}$ for all~$p\in X$.
Let $\CH(\CW)$ be defined as the graph with 
vertex set~$\set{p\in X}{|\CW_p|=3}$ and 
labeled edge set
$\set{(\{v,w\},C)}{C\in\CW \text{ and } v,w\in C \text{ are pairwise distinct}}$.
We call $\CW$ a \df{complex hexagonal web} if 
the vertex set of~$\CH(\CW)$ is not contained in a complex reducible curve
and if a general edge $\{p,q\}$ of~$\CH(\CW)$
is contained in a subgraph as defined in \FIG{hex-graph},
where the edge-labels $A,B,C,D,E,F,G,H,I\in\CW$ are pairwise distinct.
\END
\end{definition}

\begin{lemma}
\label{lem:pen}
Suppose that $Y\to X$ is the smooth model of a weak del Pezzo surface~$X$
\st $G(X)=F_\R(X)$.
If there exists $u,v,w\in G(X)$ 
\st $u\cdot v=v\cdot w=u\cdot w=1$, 
then
there exists a complex birational morphism $\chi\c Y\to\P^2$
such that images of simple curves with class $u$, $v$ or $w$
define three pencils of lines in $\P^2$.
\end{lemma}

\begin{proof}
Since $|G(X)|\geq 3$, we find that $1\leq \k^2\leq 6$ by \LEM{G}.

If $\k^2=6$, then $Y$ is the complex blowup of $\P^2$ in three points
and admits exactly three simple families.
The pullback of a pencil of lines through any center of blowup defines
a simple family and thus we concluded the proof for this case.

If $1\leq \k^2\leq 5$,
then by \LEM{orth}c there exists a complex $(-1)$-curve 
that is orthogonal to $u$, $v$ and $w$.
We contract it and by \LEM{blowdown} we obtain 
a complex weak del Pezzo surface $Y'$ of degree one less.
Moreover, by \PRP{class}c, the pushforward of the simple families 
with classes
$u$, $v$ and $w$ are complex simple families of $Y'$
\wrt an anticanonical embedding.
We repeat the same argument until we obtain a 
complex weak del Pezzo surface $Y''$ of canonical degree 6. 
We can now conclude the proof of this lemma, since we define
$\xi_\C$ as the composition of the birational morphism $Y\to Y''$
with the complex birational morphism $Y''\to\P^2$ which is the 
complex blowup of three complex points 
in the plane.
\end{proof}

\begin{lemma}
\label{lem:hex}
Suppose we are given simple families on a surface $X$.
If there exists a complex birational map $\rho\c X\dto\P^2$
such that the images of curves in the given simple families 
form three complex pencils of lines in $\P^2$, then
these simple families form a hexagonal web.
\end{lemma}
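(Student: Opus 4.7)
The plan is to reduce everything to the classical fact that three pencils of lines in $\MbbP^2$ form a hexagonal web by exploiting that hexagonality is a purely local notion.

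First I would observe that the hexagon-closure construction of \DEF{hex} only inspects a sufficiently small analytic neighborhood $U$ of a general point $p \in X$, and is therefore preserved under any map that restricts to an analytic isomorphism on $U$. Since the given map $\Mdashrow{\varphi}{X}{\MbbP^2}$ is birational, it is a biregular (hence analytic) isomorphism on some dense Zariski open subset $V \subset X$. Choosing $p \in V$ generic, so that $\varphi(p)$ avoids the three base points of the pencils and $\varphi$ is defined with inverse defined at $\varphi(p)$, the step-by-step construction of \DEF{hex} performed in $X$ at $p$ corresponds bijectively via $\varphi$ to the same construction on the web of three pencils at $\varphi(p) \in \MbbP^2$. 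Hexagonality of the three simple families on $X$ is therefore equivalent to hexagonality of the three pencils on $\MbbP^2$.

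Next I would verify that three pencils of lines with distinct base points $A, B, C \in \MbbP^2$ form a hexagonal web. This is the classical example already recalled in \FIG{2}. The quickest route is to invoke \THM{graf} (Graf-Sauer): a 3-web of lines in $\MbbP^2$ is hexagonal iff the corresponding points in the dual plane $\MbbP^{2*}$ lie on a cubic. The three pencils correspond to the three points $A, B, C \in \MbbP^{2*}$, which trivially lie on the completely reducible cubic formed by the three sides of the triangle they span, so the web is hexagonal.

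The main obstacle is only the last step, the classical closure of the hexagon for three pencils. If one preferred a self-contained argument rather than citing \THM{graf}, one could choose affine coordinates in which $A, B, C$ are the vertices of a standard triangle and verify by a direct cross-ratio computation that the six vertices of the constructed hexagon lie on a conic through $p$, forcing the construction to close at the sixth step. Either route combines with the local transfer in the first paragraph to yield the lemma immediately.
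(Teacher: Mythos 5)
Your proposal is correct and follows essentially the same route as the paper's proof: hexagonality is a local property invariant under complex birational maps, so the statement reduces to the classical Graf--Sauer criterion for three pencils of lines. One small correction in the duality step: each pencil (not its base point) corresponds to a \emph{line} in $\MbbP^{2*}$ --- the dual line of its base point --- and the cubic required by \THM{graf} is the union of these three dual lines, exactly as in the caption of \FIG{2}; saying that the pencils correspond to three points lying on a cubic misapplies the theorem (the web's lines must sweep out the cubic), though the intended reducible cubic is the same triangle of lines and the argument is immediately repaired.
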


\begin{proof}
Suppose that  
$\CL:=\set{L\subset\P^2}{L\text{ is a complex line such that } L\cap\{u,v,w\}\neq\emptyset}$
for some complex points~$u,v,w\in\P^2$.
It follows from \THM{graf} that $\CL$ is a complex hexagonal web, 
since three complex pencils of lines in~$\P^2$ 
correspond to three complex lines in~$\P^{2*}$ (see \FIG{vero}).
The union of the members of the simple families form a set~$\CV$  
\st $\set{\rho(C)^-}{C\in\CV}\subseteq\CL$, where ${\cdot}^-$
denotes the Zariski closure.
Since $\rho$ is birational there exists 
a Zariski open subset $U\subset X$ \st $\rho|_U$ 
is a complex isomorphism.
We follow the procedure in \RMK{hex} and when we draw 
distinct simple curves $C,C'\in \CV$
we also draw complex lines $\rho(C)^-,\rho(C')^-\in\CL$.
We have $|\rho(C\cap C'\cap U)|\leq 1$
and thus if $|C\cap C'\cap U|\neq 0$,
then $C\cap C'\cap U=\{p\}$ and $\sigma(p)=p$.
Hence there exists in this case a simple curve $C''\in\CV$
\st $p\in C''$ and $C''\notin\{C,C'\}$. 
Therefore the procedure results in a closed hexagon so that
$\CV$ must be a hexagonal web.
\end{proof}

\begin{proof}[Proof of \THM{hex}.]
Suppose that
$\arrow{(Y_0,\h_0)}{\mu_0}{(Y_1,\h_1)}\arrow{}{\mu_1}{}\ldots\arrow{}{\mu_{\ell-1}}{(Y_\ell,\h_\ell)}$
is the adjoint chain of~$X$. 
Recall that $G(Y_0)$ uniquely determines $\CG(X)$ (see \RMK{G} and \LEM{pp}c).
It follows from \LEM{bas} that $Y_\ell$ must be a weak del Pezzo surface.
We call a curve $C\subset Y_i$ 
on the abstract surface $Y_i$ \df{simple} if its class is in $G(Y_i)$.
Since $Y_0\to X$ is the smooth model, there exists a birational map $\varphi\c X\dto Y_0$
which sends simple curves on $X$ to simple curves on $Y_0$.

We will make a case distinction.
For each of the cases we construct a complex birational map $X\dto \P^2$
that sends the curves in the three simple families of~$X$, 
that define three mutually disjoint vertices in~$\CG(X)$,
to three pencils of lines in $\P^2$. 
Such a map satisfies the hypothesis of \LEM{hex} 
so that the simple families form a hexagonal web and therefore 
its existence concludes the proof.

Suppose that 
$\set{f\in G(Y_\ell)}{ \k_\ell\cdot f=-2}\neq\emptyset$
so that we are in case \LEM{bas}a.
It follows from \LEM{G} that $G(Y_\ell)=F_\R(Y_\ell)$ and thus $\CG(X)$ is characterized by \THM{coarse}a.
Notice that there exists classes $u,v,w\in F_\R(Y_\ell)$ \st $u\cdot v=u\cdot w=v\cdot w=1$.
By \LEM{pen} there exists a complex birational morphism $\chi\c Y_\ell\to\P^2$ that 
sends a simple curve with class $u$, $v$ or $w$ to one of three pencils of lines in~$\P^2$.
As a straightforward consequence of the definitions, the birational morphism
$\mu_{\ell-1}\circ\ldots\circ \mu_0$ maps simple curves of $Y_0$ to simple curves of $Y_\ell$.
Hence, the composition $\chi\circ\mu_{\ell-1}\circ\ldots\circ \mu_0\circ\varphi\c X\dto \P^2$
satisfies the hypothesis of \LEM{hex}.

Now suppose that $\set{f\in G(Y_\ell)}{ \k_\ell\cdot f=-2}=\emptyset$.
Notice that there exists $u,v,w\in G(Y_0)$ \st $u\cdot v=u\cdot w=v\cdot w=1$.
It follows from \LEM{case} that one of the cases at \TAB{case}[a,c,d,g,h]
holds and $\{u,v,w\}$ is a subset of either $\xi_1$, $\xi_4$ or $\xi_5$.
We use the notation of \TAB{case}.

If \TAB{case}a holds, then the birational map $\mu_{\ell-1}\circ\ldots\circ \mu_0\circ\varphi: X\dto \P^2$
satisfies the hypothesis of \LEM{hex}. 

Suppose that \TAB{case}c holds.
Notice that in this case $\beta>\alpha$.
We may assume up to permutation of $(\e_j)_{j>0}$ that $(u,v,w)=(\e_0-\e_7,\e_0-\e_8,\e_0-\e_9)$
(we chose indices 7,8 and 9 in accordance with \EXM{h}). 
It follows from \LEM{bas}b that there exists a birational map 
$\eta\c Y_\beta\dto \P^2$ that sends the curves $\set{C\subset Y_\beta}{[C]\in \{\e_0-\e_7,\e_0-\e_8,\e_0-\e_9\}}$
to three pencils of lines in $\P^2$.
It follows that $\eta\circ\mu_{\beta-1}\circ\ldots\circ \mu_0\circ\varphi: X\dto \P^2$
satisfies the hypothesis of \LEM{hex}. 

Suppose that \TAB{case}g holds \st $Y_\alpha\in \BS_1$. 
We may assume up to permutation of $(\p_j)_{j>0}$ that 
$(u,v,w)=(\l_0+\l_1-\p_1-\p_2,\l_0+\l_1-\p_1-\p_3,\l_0+\l_1-\p_1-\p_4)$.
Notice that $\P^1\times\P^1$ blown up in one point is complex isomorphic to 
$\P^2$ blownup in two points. 
Thus there exists a complex isomorphism $\psi_1\c Y_\alpha\to Z$ \st $Z\in \BP_1$
and \st
$\psi_{1*}(\l_0+\l_1-\p_1)=\e_0$, 
$\psi_{1*}(\l_0-\p_1)=\e_1$,
$\psi_{1*}(\l_1-\p_1)=\e_2$ and
$\psi_{1*}(\p_j)=\e_{j+1}$ for $j\geq 2$.
Thus $\psi_{1*}(\{u,v,w\})=\{\e_0-\e_3,\e_0-\e_4,\e_0-\e_5\}$
where $\e_0$ is the class of a preimage of a line
along a birational morphism $\psi_2\c Z\to \P^2$.
It follows that the complex birational map
$\psi_2\circ\psi_1\circ\mu_{\alpha-1}\circ\ldots\circ \mu_0\circ\varphi: X\dto \P^2$
satisfies the hypothesis of \LEM{hex}. 

Cases \TAB{case}d and \TAB{case}h are analogous to \TAB{case}c and \TAB{case}g, \resp.
Since we considered all cases we concluded the proof of \THM{hex}.
\end{proof}

\newpage
\begin{proof}[Proof of \COR{hex}.]
By assumption, $X\subset\P^n$ contains at least two conics through each point,
and thus we know from \cite{sch6} (alternatively see \citep[Corollary~2]{nls-algo-min-fam}),
that $X$ is either a ruled surface of degree at most $3$
or a weak del Pezzo surface of canonical degree $3\leq \k^2 \leq 9$ with $\h\in\{-\frac{1}{2}\k, -\frac{1}{3}\k, -\frac{2}{3}\k, -\k \}$
the class of hyperplane sections.
The linear projection of a hexagonal web is again hexagonal
and thus we only have to classify linear normalizations $X_N\subset\P^m$.

If $X$ is a cubic ruled surface, 
then $X_N\subset\P^4$ is parametrized by the map
$(1:t:t^2:s:st)$ with $t,s\in\R$ \citep[Section~3]{sch6}. 
This birational map gives, for all $\beta\in\R$, 
locally an isomorphism between a pencil of conics on $X_N$ and
a pencil of lines in $\R^2$ defined by 
$s=\alpha\, t + \beta$ with parameter $\alpha \in\R$.
Therefore, by \THM{graf}, $X$ can be covered by an hexagonal web of conics.

Suppose that $X$ is a weak del Pezzo surface. 
If $8\leq \k^2 \leq 9$, 
then $X_N$ must be either a quadric surface, the plane or a Veronese surface,
since $X$ contains at least three conics through a general point. 
Each of these surfaces can be realized as a hexagonal web (see \FIG{vero}).
If $3\leq \k^2 \leq 6$, then $\h=-\k$, $X_N\subset\P^{\k^2}$ and by \THM{nsl}
we can construct a weak del Pezzo surface of degree $\k^2$
\st $|G(X)|\geq 3$ and $\CG(X)$ contains three vertices that do not share an edge. 
Thus \COR{hex}e is a consequence of \THM{hex}.
\end{proof}

\begin{table}[!ht] 
\setstretch{1.5}
\caption{Classification of Neron-Severi lattices of weak del Pezzo surfaces (see \THM{nsl}).}
\label{tab:nsl}
\begin{Mlist}

\item Each row in the table below after the dictionary
(except for the 5 rows with the $\times$-symbol)
encodes the Neron-Severi lattice $\BN(X)$ of a weak del Pezzo surface (see \DEF{nsl}).

\item The columns $d$, $D(A)$, $D(B)$, $\#E$, $\#G$ 
stand for $\k^2$, $\FD(A(X))$, $\FD(B(X))$, $|E_\R(X)|$ and $|G(X)|$, \resp. 
The entries of the columns $\FD(A(X))$ and $\FD(B(X))$ denote Dynkin types of 
the corresponding Dynkin diagrams.

\item
See \TAB{sub}, \PRP{class} and \PRP{sub} for the interpretation of the columns.
It follows from \LEM{G} that $G(X)=F_\R(X)$ except for the rows i, iii and 2.
By \RMK{G}, $G(X)$ uniquely determines the simple family graph~$\CG(X)$.

\item
There is not always a unique choice for the Dynkin type at column $D(A)$ (see \RMK{AA}).
We decorate a Dynkin type with $'$ if $\sigma_*(\e_0)\neq \e_0$.

\item 
For each component $W\subset B(X)$ (see \DEF{attr})
the Dynkin type of $\FD(W)$ at column $D(B)$
is \underline{solid underlined} 
if $\sigma_*(w)=w$ for all $w\in W$
and $\udot{\text{dashed underlined}}$ if $\sigma_*(W)=W$ but there exists $w\in W$ \st $\sigma_*(w)\neq w$.
If $W,W'\subset B(X)$ are different components \st $\sigma_*(W)=W'$, then
the corresponding Dynkin types are not decorated.

\item From the entry at column $\sigma_A||B$ we can recover $\sigma_*\c N(X)\to N(X)$
and $B(X)$ using the dictionary below.
For example, at row number 10, the entry $\frac{0}{0}\frac{0}{2}\frac{0}{1}\frac{0}{3}||\frac{3}{7}$
encodes that $\sigma_*(\e_0)=\e_0$, $\sigma_*(\e_1)=\e_2$, $\sigma_*(\e_2)=\e_1$, $\sigma_*(\e_3)=\e_3$
and $B(X)=\{\e_0-\e_1-\e_2-\e_3\}$ \wrt a type 1 basis $\md{\e_0,\ldots,\e_r}_\Z$ for $N(X)$ (see \DEF{type}).  
It follows from \RMK{A} that $A(X)=\{\e_1-\e_2\}$.
Indeed we verify that $\FD(A(X))=A_1$ and $B(X)=\underline{A_1}$.

\item We can uniquely recover $h^0\c N(X)\to\Z_{\geq0}$ from $B(X)$ (see \LEM{h0}) and thus 
the entries of the columns in a row accounts for all data of $\BN(X)$.

\item See \EXM{howto} for a scenario which explains how the table can be used.
\end{Mlist}
\end{table}

\begin{table}
\setstretch{1.8}
A dictionary for symbols in the column $\sigma_A||B$:
\tiny
\\[2mm]

\end{tabular}
\end{table}

\clearpage
\section{Acknowledgements}
I would like to thank H. Pottmann and M. Skopenkov
for interesting comments and discussions. 
The image of \FIG{cleb} is adapted from \citep[Wikipedia]{wiki}.
The images of surfaces were made using \citep[Povray]{povray} and 
the algorithms were implemented using \citep[Sage]{sage}.
Financial support was provided by the Austrian Science Fund (FWF): P33003.

\section{References}
\bibliography{geometry}

\begin{thebibliography}{35}
\providecommand{\natexlab}[1]{#1}
\providecommand{\url}[1]{\texttt{#1}}
\expandafter\ifx\csname urlstyle\endcsname\relax
  \providecommand{\doi}[1]{doi: #1}\else
  \providecommand{\doi}{doi: \begingroup \urlstyle{rm}\Url}\fi

\bibitem[Akopyan(2018)]{ako1}
A.~Akopyan.
\newblock 3-{W}ebs generated by confocal conics and circles.
\newblock \emph{Geom. Dedicata}, 194:\penalty0 55--64, 2018.

\bibitem[Blaschke and Bol(1944)]{bla1}
W.~Blaschke and G.~Bol.
\newblock \emph{Geometrie der {G}ewebe. {T}opologische {F}ragen der
  {D}ifferentialgeometrie}.
\newblock J. W. Edwards, 1944.

\bibitem[Blum(1980)]{blum1}
R.~Blum.
\newblock Circles on surfaces in the {E}uclidean {$3$}-space.
\newblock volume 792 of \emph{Lecture Notes in Math.}, pages 213--221.
  Springer, 1980.

\bibitem[Cayley(1848)]{cay1}
A.~Cayley.
\newblock On the triple tangent planes of surfaces of the third order.
\newblock \emph{Cambridge and Dublin Math. J.}, 4:\penalty0 118--138, 1848.

\bibitem[Clebsch(1871)]{cleb1}
A.~Clebsch.
\newblock {\"U}ber die {A}nwendung der quadratischen {S}ubstitution auf die
  {G}leichungen 5ten {G}rades und die geometrische {T}heorie des ebenen
  {F}\"unfseits.
\newblock \emph{Math. Ann.}, 4:\penalty0 284--345, 1871.

\bibitem[Dolgachev(2012)]{dol1}
I.~V. Dolgachev.
\newblock \emph{Classical algebraic geometry: A modern view}.
\newblock Cambridge University Press, 2012.
\newblock ISBN 978-1-107-01765-8.

\bibitem[Du~Val(1934)]{val1}
P.~Du~Val.
\newblock On isolated singularities of surfaces which do not affect the
  conditions of adjunction. i, ii, iii.
\newblock \emph{Proc. Cambridge Phil. Soc.}, 30, 1934.

\bibitem[Fuchs and Tabachnikov(2007)]{omni}
D.~Fuchs and S.~Tabachnikov.
\newblock Mathematical omnibus: Thirty lectures on classical mathematics.
\newblock \emph{Amer. Math. Soc.}, 2007.

\bibitem[Graf and Sauer(1924)]{graf1}
H.~Graf and R.~Sauer.
\newblock {\"U}ber dreifache {G}eradensysteme.
\newblock \emph{Math. nat. Abt.}, pages 119--156, 1924.

\bibitem[Green and Tao(2013)]{tao}
B.~Green and T.~Tao.
\newblock On sets defining few ordinary lines.
\newblock \emph{Discrete {\&} Computational Geometry}, 50\penalty0
  (2):\penalty0 409--468, 2013.

\bibitem[Hall(2015)]{hab1}
B.~Hall.
\newblock \emph{Lie groups, {L}ie algebras, and representations}, volume 222 of
  \emph{Graduate Texts in Mathematics}.
\newblock Springer, 2015.

\bibitem[Hartshorne(1977)]{har1}
R.~Hartshorne.
\newblock \emph{Algebraic geometry}.
\newblock Springer-Verlag, 1977.

\bibitem[Kn\"orrer and Miller(1987)]{mil}
H.~Kn\"orrer and T.~Miller.
\newblock {Topologische Typen reeller kubischer Fl\"achen}.
\newblock \emph{Math. Z.}, 195:\penalty0 51--67, 1987.

\bibitem[Koll{\'a}r(1997)]{kol2}
J.~Koll{\'a}r.
\newblock Real algebraic surfaces.
\newblock
  \emph{\href{https://arxiv.org/abs/alg-geom/9712003}{arXiv:alg-geom/9712003}},
  1997.

\bibitem[Koll\'{a}r(2007)]{kolsing}
J.~Koll\'{a}r.
\newblock \emph{Lectures on resolution of singularities}, volume 166.
\newblock Princeton University Press, 2007.
\newblock ISBN 978-0-691-12923-5.

\bibitem[Lubbes(2018{\natexlab{a}})]{ns_lattice}
N.~Lubbes.
\newblock Sage library for computations in {N}eron-{S}everi lattice,
  2018{\natexlab{a}}.
\newblock
  \href{http://github.com/niels-lubbes/ns_lattice}{github.com/niels-lubbes/ns\_lattice}.

\bibitem[Lubbes(2018{\natexlab{b}})]{orbital}
N.~Lubbes.
\newblock Sage library for constructing and visualizing curves on surfaces.,
  2018{\natexlab{b}}.
\newblock
  \href{http://github.com/niels-lubbes/orbital/}{github.com/niels-lubbes/orbital}.

\bibitem[Lubbes(2019)]{nls-algo-min-fam}
N.~Lubbes.
\newblock Minimal degree rational curves on real surfaces.
\newblock \emph{Adv. Math.}, 345:\penalty0 263--288, 2019.

\bibitem[Manin(1986)]{man2}
Y.~I. Manin.
\newblock \emph{Cubic forms}.
\newblock North-Holland Publishing Co., 1986.
\newblock ISBN 0-444-87823-8.

\bibitem[Mathoverflow(2014)]{mo}
Mathoverflow, 2014.
\newblock
  \href{https://mathoverflow.net/questions/172550/graphs-of-lines-on-del-pezzo-surfaces}{mathoverflow.net/questions/172550/graphs-of-lines-on-del-pezzo-surfaces}.

\bibitem[Nilov(2014)]{nil1}
F.~Nilov.
\newblock On new constructions in the {B}laschke-{B}ol problem.
\newblock \emph{Sb. Math.}, 205:\penalty0 1650--1667, 2014.

\bibitem[Pereira and Pirio(2015)]{per1}
J.~V. Pereira and L.~Pirio.
\newblock \emph{An invitation to web geometry}, volume~2 of \emph{IMPA
  Monographs}.
\newblock Springer, 2015.
\newblock ISBN 978-3-319-14561-7.

\bibitem[Per(2004)]{povray}
\emph{Povray}.
\newblock Persistence of Vision Pty. Ltd., 2004.
\newblock \href{http://www.povray.org/download/}{povray.org}.

\bibitem[Polo-Blanco and Top(2008)]{top1}
I.~Polo-Blanco and J.~Top.
\newblock Explicit real cubic surfaces.
\newblock \emph{Canad. Math. Bull.}, 51\penalty0 (1):\penalty0 125--133, 2008.

\bibitem[Pottmann et~al.(2012)Pottmann, Shi, and Skopenkov]{pot2}
H.~Pottmann, L.~Shi, and M.~Skopenkov.
\newblock Darboux cyclides and webs from circles.
\newblock \emph{Comput. Aided Geom. Design}, 29\penalty0 (1):\penalty0 77--97,
  2012.

\bibitem[Ren et~al.(2016)Ren, Shaw, and Sturmfels]{sturm}
Q.~Ren, K.~Shaw, and B.~Sturmfels.
\newblock {Tropicalization of del Pezzo surfaces}.
\newblock \emph{{Adv. Math.}}, 300:\penalty0 156--189, 2016.

\bibitem[Schicho(2001)]{sch6}
J.~Schicho.
\newblock The multiple conical surfaces.
\newblock \emph{Beitr. Alg. Geom.}, 42:\penalty0 71--87, 2001.

\bibitem[Schicho(2005)]{sch-dp}
J.~Schicho.
\newblock Elementary theory of del {P}ezzo surfaces.
\newblock In \emph{Computational methods for algebraic spline surfaces}, pages
  77--94. Springer, 2005.

\bibitem[Shelekhov(2007)]{shel1}
A.~M. Shelekhov.
\newblock Classification of regular three-webs formed by pencils of circles.
\newblock \emph{J. Math. Sciences.}, 2007.

\bibitem[Silhol(1989)]{sil1}
R.~Silhol.
\newblock \emph{Real algebraic surfaces}, volume 1392 of \emph{Lecture Notes in
  Mathematics}.
\newblock Springer, 1989.

\bibitem[Stein et~al.(2012)]{sage}
W.\thinspace{}A. Stein et~al.
\newblock \emph{{S}age {M}athematics {S}oftware}.
\newblock The Sage Development Team, 2012.
\newblock \href{http://www.sagemath.org}{sagemath.org}.

\bibitem[Villarceau(1848)]{vil1}
Y.~Villarceau.
\newblock Theoreme sur le tore.
\newblock \emph{Nouvelles annales de mathematiques}, 7:\penalty0 345--347,
  1848.
\newblock \href{http://eudml.org/doc/95880}{eudml.org/doc/95880}.

\bibitem[Wall(1987)]{wal1}
C.~T.~C. Wall.
\newblock Real forms of smooth del {P}ezzo surfaces.
\newblock \emph{J. Reine Angew. Math.}, 375/376:\penalty0 47--66, 1987.

\bibitem[Wikipedia(2018)]{wiki}
Wikipedia, 2018.
\newblock
  \href{https://en.wikipedia.org/wiki/Generalized_quadrangle}{en.wikipedia.org/wiki/Generalized\_quadrangle}.

\bibitem[Wren(1669)]{wrn1}
C.~Wren.
\newblock Generatio corporis cylindroidis hyperbolici, elaborandis lenti bus
  hyperbolicis accommodati.
\newblock 4:\penalty0 961--962, 1669.
\newblock
  \href{https://doi.org/10.1098/rstl.1669.0018}{doi.org/10.1098/rstl.1669.0018}.

\end{thebibliography}

\paragraph{address of author:}~
Johann Radon Institute for Computational and Applied Mathematics (RICAM), Austrian Academy of Sciences
\\
\textbf{email:} niels.lubbes@gmail.com

\end{document}